\newcommand{\N}{\mathbb N} 
\newcommand{\R}{\mathbb R} 
\newcommand{\Rn}{\R^n}
\newcommand{\Sph}{\mathbb{S}^{n-1}}
\newcommand{\elim}{\operatorname{epi-lim}}
\newcommand{\hlim}{\operatorname{hypo-lim}}
\newcommand{\vm}{\operatorname{m}} 
\newcommand{\K}{{\mathcal K}}
\newcommand{\Kn}{\K^n} 
\newcommand{\Ko}{\Kn_{o}}
\renewcommand{\P}{{\mathcal P}} 
\newcommand{\Pn}{\P^n}
\newcommand{\Po}{\Pn_{o}}
\newcommand{\CV}{\operatorname{Conv}(\Rn)}
\newcommand{\LC}{\operatorname{LC}(\Rn)}
\renewcommand{\d}{\,\mathrm{d}}
\newcommand{\mx}{\mathbin{\vee}} 
\newcommand{\mn}{\mathbin{\wedge}}
\newcommand{\conv}{\operatorname{conv}}
\newcommand{\dom}{\operatorname{dom}}
\newcommand{\epi}{\operatorname{epi}}
\newcommand{\pos}{\operatorname{pos}}
\newcommand{\Ind}{\mathrm{I}}
\newcommand{\Char}{\operatorname{\chi}}
\renewcommand{\l}{\ell}
\newcommand{\oD}{\operatorname{D}}
\newcommand{\oY}{\operatorname{Y}}
\newcommand{\oZ}{\operatorname{Z}}
\newcommand{\oM}{\operatorname{M}}
\newcommand\SLn{\operatorname{SL}(n)}
\newcommand{\eto}{\stackrel{epi}{\longrightarrow}}
\newcommand{\hto}{\stackrel{hypo}{\longrightarrow}}
\newtheorem{Theorem}{Theorem}
\newtheorem{lemma}{Lemma}[section]
\newtheorem{theorem}[lemma]{Theorem}
\newtheorem{corollary}[lemma]{Corollary}
\newtheorem*{corollary*}{Corollary}
\title{Valuations on Log-Concave Functions}
\author{Fabian Mussnig}
\date{}
\begin{document}
\maketitle

\begin{abstract}
A classification of $\SLn$ and translation covariant Minkowski valuations on log-concave functions is established. The moment vector and the recently introduced level set body of log-concave functions are characterized. Furthermore, analogs of the Euler characteristic and volume are characterized as $\SLn$ and translation invariant valuations on log-concave functions.
\bigskip

{\noindent
2000 AMS subject classification: 26B25 (46B20, 52A21, 52A41, 52B45)}
\end{abstract}

\noindent
A function $\oZ$ defined on a lattice $(\mathcal{L},\mx,\mn)$ and taking values in an abelian semigroup is called a \textit{valuation} if
\begin{equation}
\label{eq:val}
\oZ(f\mx g) + \oZ(f\mn g) = \oZ(f)+\oZ(g),
\end{equation}
for all $f,g\in\mathcal{L}$. A function $\oZ$ defined on a set $\mathcal{S}\subset \mathcal{L}$ is called a valuation if (\ref{eq:val}) holds whenever $f,g,f\mx g,f\mn g \in\mathcal{S}$. In the classical theory, valuations on the set of convex bodies (non-empty, compact, convex sets), $\Kn$, in $\Rn$ are studied, where $\mx$ and $\mn$ denote union and intersection, respectively. Valuations played a critical role in Dehn's solution of Hilbert's Third Problem and have been a central focus in convex geometric analysis. In many cases, well known functions in geometry could be characterized as valuations. For example, a first classification of the Euler characteristic and volume as continuous, $\SLn$ and translation invariant valuations on $\Kn$ was established by Blaschke \cite{blaschke}. The celebrated Hadwiger classification theorem \cite{hadwiger} gives a complete classification of continuous, rotation and translation invariant valuations on $\Kn$ and provides a characterization of intrinsic volumes. Alesker \cite{Alesker01} obtained classification results for translation invariant valuations. Since several important geometric operators like the Steiner point and the moment vector are not translation invariant, also translation covariance played an important role. In particular, Hadwiger~\&~Schneider \cite{hadwiger_schneider} characterized linear combinations of quermassvectors as continuous, rotation and translation covariant vector-valued valuations.\par
In addition to the ongoing research on real-valued valuations on convex bodies \cite{Alesker99,haberl_parapatits_centro,huang_lyz_acta,ludwig_reitzner_annals,li_ma,ludwig_reitzner}, valuations with values in $\Kn$ have attracted interest. Such a map is called a \textit{Minkowski valuation} if the addition in (\ref{eq:val}) is given by Minkowski addition, that is $K+L=\{x+y\,:\,x\in K,\,y\in L\}$ for $K,L\in\Kn$. The first results in this direction were established by Ludwig \cite{ludwig_projection, ludwig_minkowski}. See \cite{haberl, haberl_parapatits_moments,li_leng,wannerer_gln_equi, schuster_wannerer_gln_contra_mink} for some of the pertinent results.\par
More recently, valuations were defined on function spaces. For a space $\mathcal{S}$ of real-valued functions we denote by $f\mx g$ the pointwise maximum of $f$ and $g$ while $f\mn g$ denotes their pointwise minimum. For Sobolev spaces \cite{ludwig_fisher, ludwig_sobolev, ma} and $L^p$ spaces \cite{ludwig_covariance, tsang_val_on_lp, tsang_mink_val_on_lp, ober_minkowski} complete classifications of valuations intertwining with the $\SLn$ were established. For definable functions, an analog to Hadwiger's theorem was proven \cite{baryshnikov_ghrist_wright}. Valuations on convex functions \cite{Alesker_convex,cavallina_colesanti,colesanti_ludwig_mussnig,colesanti_ludwig_mussnig_mink} and quasi-concave functions \cite{colesanti_lombardi, colesanti_lombardi_parapatits} were introduced and classified.\par
The aim of this paper is to establish a classification of $\SLn$ and translation covariant valuations on log-concave functions, that is functions of the form $e^{-u}$, where $u$ is a convex function. Let $\LC$ denote the space of log-concave functions $f:\Rn\to[0,+\infty)$ that are not identically $0$, upper semicontinuous and vanish at infinity. Here a function is said to \textit{vanish at infinity} if
\begin{equation*}
\lim_{|x|\to+\infty} f(x)=0,
\end{equation*} 
where $|x|$ denotes the Euclidean norm of $x$. Furthermore, we equip $\LC$ with the topology associated to hypo-convergence (see Section~\ref{subse:conv_and_log_fct}). It is easy to see that for any $K\in\Kn$ the characteristic function $\Char_K$ is an element of $\LC$. Since
\begin{equation*}
\Char_{K\cup L} = \Char_K \mx \Char_L \qquad \Char_{K\cap L} = \Char_K \mn \Char_L,
\end{equation*}
for all $K,L\in\Kn$ such that also $K\cup L\in\Kn$, valuations on $\LC$ can be seen as a generalization of valuations on $\Kn$. For $f\in\LC$ the level set body $[f]$ is given by
\begin{equation*}
h([f],z)=\int_0^{+\infty} h(\{f \geq t\},z) \d t,
\end{equation*}
for $z\in\Rn$. Here, $h(K,z)=\max\{z\cdot x\,:\, x\in K\}$, where $z \cdot x$ is the standard inner product of $x,z\in\Rn$, denotes the support function of a convex body $K\in\Kn$ and uniquely describes $K$. Moreover, we set $h(\emptyset,z)=0$ for every $z\in\Rn$. The level set body was recently introduced for a more general class of quasi-concave functions in \cite{colesanti_ludwig_mussnig_mink}, where it appeared in the classification of $\SLn$ covariant Minkowski valuations on convex functions. Note, that $[\Char_K]=K$ for every $K\in\Kn$. Furthermore, the moment vector $\vm(f)$ of a log-concave function $f\in\LC$ is defined by
\begin{equation*}
\vm(f)=\int_{\Rn} x\,f(x) \d x,
\end{equation*}
which is an element of $\Rn$ (for details see Section~\ref{se:sln_cov_mink_val_on_lc}). For functions in $L^1(\Rn, |x|\d x)$, the moment vector was introduced and characterized as a Minkowski valuation by Tsang \cite{tsang_mink_val_on_lp}.\par
For $x\in\Rn$, let $\tau_x$ denote the translation $z\mapsto z+x$ on $\Rn$ and let $\mathcal{S}$ be a space of real-valued functions defined on $\Rn$. We call an operator $\oZ:\mathcal{S}\to\Kn$ \textit{translation covariant} if there exists a function $\oZ^0:\mathcal{S}\to\R$ associated with $\oZ$ such that $\oZ(f\circ \tau_x^{-1})=\oZ(f)+\oZ^0(f)x$ for every $f\in\mathcal{S}$ and $x\in\Rn$. Moreover, $\oZ$ is said to be $\SLn$ \textit{covariant} if $\oZ(f\circ\phi^{-1}) = \phi \oZ(f)$ for every $\phi\in\SLn$ and $f\in\mathcal{S}$. Furthermore, a function $\oZ:\LC\to\Kn$ is called \textit{homogeneous of degree} $q\in\R$ if $\oZ(sf)=s^q\oZ(f)$ for every $s>0$ and $f\in\LC$. We say that a map $\oZ:\LC\to\Kn$ is \textit{equi-affinely covariant} if it is translation covariant, $\SLn$ covariant  and homogeneous. For the following result let $n\geq 3$.

\begin{Theorem}
\label{thm:class_cov}
An operator $\oZ:\LC\to\Kn$ is a continuous, equi-affinely covariant Minkowski valuation if and only if there exist constants $c_1,c_2\geq 0$, $c_3\in\R$ and $q>0$ such that
\begin{equation*}
\oZ(f)=c_1[f^q]+c_2(-[f^q])+c_3 \vm(f^q)
\end{equation*}
for every $f\in\LC$.
\end{Theorem}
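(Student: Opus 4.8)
The proof naturally splits into the routine sufficiency direction and the substantial necessity direction.

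\emph{Sufficiency.} It is enough to check that each of $f\mapsto[f^q]$, $f\mapsto-[f^q]$ and $f\mapsto\vm(f^q)$ is a continuous, equi-affinely covariant Minkowski valuation which is homogeneous of degree $q$, since a Minkowski combination of such maps with nonnegative coefficients (the moment term being a point, so that $c_3\in\R$ is unrestricted) again has these properties. Since $t\mapsto t^q$ is increasing on $[0,+\infty)$, we have $(f\mx g)^q=f^q\mx g^q$ and $(f\mn g)^q=f^q\mn g^q$, so the valuation identity for $[\,\cdot^q]$ and $\vm(\cdot^q)$ reduces to that of $[\,\cdot\,]$ and $\vm$; the former follows from $h(A\cup B,z)+h(A\cap B,z)=h(A,z)+h(B,z)$ applied to the superlevel sets (which are convex whenever the relevant functions lie in $\LC$), the latter from the pointwise identity $(f\mx g)+(f\mn g)=f+g$. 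Homogeneity of degree $q$, $\SLn$ covariance and translation covariance (with associated function $\oZ^0$ a multiple of $\|f\|_\infty^{\,q}$, respectively of $\int_{\Rn}f^q\d x$) are immediate from the transformation behaviour of support functions, integrals and Lebesgue measure, and continuity with respect to hypo-convergence is exactly the content of the corresponding results established in the earlier sections. Note that all three maps are defined on $\LC$ precisely because $q>0$.

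\emph{Necessity, reduction to convex bodies.} Since $K_j\to K$ in $\Kn$ implies $\Char_{K_j}\to\Char_K$ in the hypo-topology, and since $\Char_{K\cup L}=\Char_K\mx\Char_L$ and $\Char_{K\cap L}=\Char_K\mn\Char_L$ whenever $K\cup L\in\Kn$, the map $K\mapsto\oZ(\Char_K)$ is a continuous Minkowski valuation on $\Kn$; from $\Char_K\circ\phi^{-1}=\Char_{\phi K}$ and $\Char_K\circ\tau_x^{-1}=\Char_{K+x}$ it is $\SLn$ and translation covariant. Invoking the classification of such operators on $\Kn$ — this is where the hypothesis $n\ge3$ is genuinely used, the planar case admitting further valuations — yields constants $c_1,c_2\ge0$ and $c_3\in\R$ with $\oZ(\Char_K)=c_1K+c_2(-K)+c_3\int_Kx\d x$; equivalently $\oZ(\Char_K)=c_1[\Char_K^q]+c_2(-[\Char_K^q])+c_3\vm(\Char_K^q)$, using $\Char_K^q=\Char_K$, $[\Char_K]=K$ and $\vm(\Char_K)=\int_Kx\d x$. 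By homogeneity $\oZ(s\Char_K)=s^q\oZ(\Char_K)$, and since $[(s\Char_K)^q]=s^qK$ and $\vm((s\Char_K)^q)=s^q\int_Kx\d x$, the asserted formula holds on every function $s\Char_K$ with $s>0$ and $K\in\Kn$.

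\emph{Necessity, extension to all of $\LC$.} Put $\oZ'(f)=c_1[f^q]+c_2(-[f^q])+c_3\vm(f^q)$; by the sufficiency part $\oZ'$ is a Minkowski valuation, and $\oZ$ and $\oZ'$ agree on all functions $s\Char_K$. Passing to support functions turns the valuation identity into $h(\oZ(f\mx g),z)+h(\oZ(f\mn g),z)=h(\oZ(f),z)+h(\oZ(g),z)$, in which differences are permitted. Given nested bodies $K_1\supseteq\cdots\supseteq K_{m+1}$ in $\Kn$ and $0<s_1<\cdots<s_{m+1}$, set $g=s_1\Char_{K_1}\mx\cdots\mx s_m\Char_{K_m}$ and $p=s_{m+1}\Char_{K_{m+1}}$; then $g\mx p$ is the $(m+1)$-step function, $g\mn p=s_m\Char_{K_{m+1}}$ is a one-step function, and all four functions lie in $\LC$. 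An induction on the number of steps (base case: one-step functions, just treated) therefore gives $\oZ=\oZ'$ on all such step functions. These are dense in $\LC$ with respect to hypo-convergence — approximate $f\in\LC$ by the step function built from finitely many of its superlevel sets and refine — so continuity of $\oZ$, $[\,\cdot^q]$ and $\vm(\cdot^q)$ yields $\oZ=\oZ'$ on $\LC$. Finally $q>0$: if some $c_i\ne0$, then taking $f\in\LC$ with unbounded support and step functions $f_j\to f$ forces $[f_j^q]$ or $\vm(f_j^q)$ to be unbounded unless $q>0$, contradicting $\oZ(f_j)\to\oZ(f)$; and if all $c_i$ vanish then $\oZ\equiv\{0\}$ and any $q>0$ may be chosen.

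\emph{Main obstacle.} The crux is the reduction to convex bodies: one must have in hand the precise list of continuous, $\SLn$ and translation covariant Minkowski valuations on $\Kn$ for $n\ge3$ and verify that homogeneity of $\oZ$ in the functional variable leaves no room for additional terms (for instance a Steiner point contribution). A secondary, more technical layer is the handling of the hypo-topology — the continuity of the level set body and the moment vector (supplied by the preliminary sections) and the density of nested step functions — together with the necessity of arguing throughout via support functions, since Minkowski addition is not cancellative.
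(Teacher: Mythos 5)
Your sufficiency direction and the reduction to indicator functions are fine, but the extension step contains a fatal flaw: the step functions you invoke are not log-concave, so they do not belong to $\LC$, and the argument collapses. Concretely, take nested bodies $K_1\supseteq K_2$ and $0<s_1<s_2$; the ``two-step'' function $g=s_1\Char_{K_1}\mx s_2\Char_{K_2}$ satisfies $-\log g\equiv -\log s_2$ on $K_2$ and $-\log g\equiv -\log s_1>-\log s_2$ on $K_1\setminus K_2$, which is a jump and hence not a convex function of $x$ (it is only quasi-convex). Since the valuation identity $\oZ(f\mx g)+\oZ(f\mn g)=\oZ(f)+\oZ(g)$ on $\LC$ only applies when all four of $f,g,f\mx g,f\mn g$ lie in $\LC$, your induction never gets started: already the base case $f\mx g$ falls outside the domain of $\oZ$. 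The same defect voids the density claim --- hypo-convergence and the continuity of $\oZ$ are only defined on $\LC$, so approximating a log-concave $f$ by its (non-log-concave) superlevel step functions does not allow you to transport information from indicator functions to $f$.

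The paper replaces your density argument by a genuinely different and substantially harder mechanism. It appeals to a reduction lemma (here Lemma~\ref{le:reduction}, taken from the earlier work on valuations on convex functions): a continuous valuation on $\CV$ is uniquely determined by its values on the cone-type functions $\l_P+t$ and their translates, and these \emph{are} in $\CV$. This forces one to understand $\oZ(e^{-\l_K})$, not just $\oZ(\Char_K)$; and on cone functions the map $K\mapsto\oZ(e^{-\l_K})$ is only a valuation on $\Ko$, so Theorem~\ref{thm:haberl_covariant} a priori allows a moment body term $d_4\,\oM(K)$ which has no counterpart in your target formula. The heart of the paper's proof is therefore to establish the relations between the ``cone constants'' $d_1,d_2,d_3,d_4$ and the ``indicator constants'' $c_1,c_2,c_3$ (Lemmas~\ref{le:c1_c2} and~\ref{le:c3_d4}), in particular killing the moment body by showing $d_4=0$. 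That computation is precisely the work your proposal skips, and it cannot be skipped, because there is no legitimate way to transfer the indicator-function formula to all of $\LC$ without first controlling what $\oZ$ does on cone functions. Your identification of the ``main obstacle'' as the classification on convex bodies is therefore off target: that part is a direct citation, whereas the uniqueness step you treat as routine is where the real difficulty lies.

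Two smaller points. First, even if your step functions were admissible, your closing argument for $q>0$ is impressionistic; the paper isolates this in Lemma~\ref{le:hom_of_pos_deg}, which uses the relations $c_i=q\,d_i$ together with the sign constraints $c_i,d_i\geq 0$ and nonnegativity of support-function limits, and then Lemma~\ref{le:trans_inv_mink_val} to conclude triviality when $q\leq 0$. Second, Corollary~\ref{cor:haberl_covariant} (the version of Haberl's theorem on $\Kn$ rather than $\Ko$) is itself proved in the paper by a translation argument that eliminates the $\oM$ term; your proposal cites the conclusion but would in any case need the $\Ko$ version as well once cone functions enter, so no effort is actually saved.
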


\noindent
Here, $-K$ denotes the reflection of the body $K\in\Kn$ at the origin, that is $h(-K,z)=h(K,-z)$ for $z\in\Rn$. We will see that Theorem~\ref{thm:class_cov} corresponds to a result for valuations on $\Kn$ (see Corollary~\ref{cor:haberl_covariant}). Denoting by $\oD K = K+(-K)$ the difference body of $K\in\Kn$, we immediately obtain the following corollary, which again corresponds to a result in the theory of valuations on $\Kn$ \cite[Corollary 1.2.]{ludwig_minkowski}.

\begin{corollary*}
An operator $\oZ:\LC\to\Kn$ is a continuous, $\SLn$ covariant, translation invariant and homogeneous Minkowski valuation if and only if there exist constants $c\geq 0$ and $q>0$ such that
\begin{equation*}
\oZ(f) = c \oD [f^q],
\end{equation*}
for every $f\in\LC$.
\end{corollary*}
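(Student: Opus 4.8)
The plan is to deduce the Corollary directly from Theorem~\ref{thm:class_cov}. For the ``only if'' direction I would first observe that a continuous, $\SLn$ covariant, translation invariant and homogeneous Minkowski valuation $\oZ$ is in particular translation covariant with associated function $\oZ^0\equiv 0$, hence equi-affinely covariant. Thus Theorem~\ref{thm:class_cov} applies and yields $c_1,c_2\geq 0$, $c_3\in\R$ and $q>0$ with $\oZ(f)=c_1[f^q]+c_2(-[f^q])+c_3\vm(f^q)$ for all $f\in\LC$. It then remains to exploit translation invariance to pin down the constants.

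For this step I would record the translation behaviour of the two building blocks. Writing $M_f:=\max_{x\in\Rn}f(x)\in(0,+\infty)$ and using $\{(f\circ\tau_x^{-1})\geq t\}=x+\{f\geq t\}$ together with $h(x+K,z)=z\cdot x+h(K,z)$, one gets $[(f\circ\tau_x^{-1})^q]=[f^q]+M_f^q\,x$, while the substitution $y\mapsto y-x$ in the defining integral gives $\vm((f\circ\tau_x^{-1})^q)=\vm(f^q)+\bigl(\int_{\Rn}f^q\bigr)x$. Substituting into the representation and using $-(K+v)=(-K)-v$ yields
\[
\oZ(f\circ\tau_x^{-1})=\oZ(f)+\Bigl((c_1-c_2)\,M_f^q+c_3\!\int_{\Rn}f^q\Bigr)x
\]
for every $x\in\Rn$, so translation invariance is equivalent to $(c_1-c_2)\,M_f^q+c_3\int_{\Rn}f^q=0$ for all $f\in\LC$. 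Testing on $f=s\,\Char_K$ with $s>0$ and $K\in\Kn$, where $M_f^q=s^q$ and $\int_{\Rn}f^q=s^q\operatorname{vol}(K)$, this becomes $(c_1-c_2)+c_3\operatorname{vol}(K)=0$; letting $\operatorname{vol}(K)$ range over $(0,+\infty)$ forces $c_3=0$ and then $c_1=c_2=:c\geq 0$, hence $\oZ(f)=c\bigl([f^q]+(-[f^q])\bigr)=c\,\oD[f^q]$.

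For the converse I would argue that, by Theorem~\ref{thm:class_cov} applied with coefficients $(1,0,0)$ and $(0,1,0)$, both $f\mapsto[f^q]$ and $f\mapsto -[f^q]$ are continuous, equi-affinely covariant Minkowski valuations on $\LC$, homogeneous of degree $q$, and by the computation above they are translation covariant with associated functions $f\mapsto M_f^q$ and $f\mapsto -M_f^q$. Therefore $\oZ(f)=c\,\oD[f^q]=c[f^q]+c(-[f^q])$, being the sum of nonnegative multiples of these, is continuous, homogeneous of degree $q$, a Minkowski valuation, and $\SLn$ covariant because $\phi(-K)=-\phi(K)$ for $\phi\in\SLn$ gives $\phi\,\oD[f^q]=\oD[(f\circ\phi^{-1})^q]$; its associated function is $c\,M_f^q-c\,M_f^q\equiv 0$, so $\oZ$ is translation invariant. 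I expect the only genuinely non-routine point to be the precise translation behaviour of the level set body, i.e.\ the value $M_f^q$ of its associated function, which should already have been extracted in the proof of Theorem~\ref{thm:class_cov}; everything else is bookkeeping with Minkowski sums and origin-reflections.
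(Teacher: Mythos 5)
Your proof is correct and takes essentially the same route as the paper, which presents this corollary as an immediate consequence of Theorem~\ref{thm:class_cov} without writing out the details; you simply fill them in. The key observation — that translation invariance is translation covariance with $\oZ^0\equiv 0$, so Theorem~\ref{thm:class_cov} applies, and then testing $(c_1-c_2)M_f^q+c_3\int f^q=0$ on $f=s\,\Char_K$ with varying $V_n(K)$ forces $c_3=0$ and $c_1=c_2$ — is exactly the intended bookkeeping, and the translation behaviour of $[f^q]$ with associated function $V_0(f)^q=M_f^q$ is indeed already recorded in Lemma~\ref{le:id_on_lc_is_a_val}.
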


\noindent
Here, we say that a map $\oZ$ defined on a space $\mathcal{S}$ of real-valued functions on $\Rn$ is \textit{translation invariant} if $\oZ(f\circ\tau_x^{-1}) = \oZ(f)$ for every $f\in\mathcal{S}$ and $x\in\Rn$. We remark that this corollary also follows from \cite[Theorem 2]{colesanti_ludwig_mussnig_mink}, where $\SLn$ covariant Minkowski valuations on convex functions were characterized.\par
In order to prove Theorem~\ref{thm:class_cov} we will need a classification of real-valued valuations on $\LC$ that corresponds to the result by Blaschke mentioned above. A map $\oZ:\mathcal{S}\to\R$ is called $\SLn$ \textit{invariant} if $\oZ(f\circ\phi^{-1})=\oZ(f)$ for every $\phi\in\SLn$ and $f\in\mathcal{S}$. We call an operator $\oZ:\LC\to\R$ \textit{equi-affinely invariant} if $\oZ$ is translation invariant, $\SLn$ invariant and homogeneous, where homogeneity is defined as before. Let $n\geq 2$.

\begin{Theorem}
\label{thm:class_inv}
An operator $\oZ:\LC\to\R$ is a continuous, equi-affinely invariant valuation if and only if there exist constants $c_0,c_n\in\R$ and $q\in\R$, with $q>0$ if $c_n\neq 0$, such that
\begin{equation*}
\oZ(f)=c_0 \big(\max\nolimits_{x\in\Rn} f(x) \big)^q+c_n \int_{\Rn} f^q(x) \d x,
\end{equation*}
for every $f\in\LC$.
\end{Theorem}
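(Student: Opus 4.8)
\emph{Sufficiency.} I would first check that the two summands are continuous, equi-affinely invariant valuations. The functional $f\mapsto\big(\max_{x\in\Rn}f(x)\big)^q$ is translation and $\SLn$ invariant and homogeneous of degree $q$, since $\max(sf)=s\max f$. To see it is a valuation I would use that $\max(f\mx g)=\max(\max f,\max g)$ always holds, whereas if $f\mx g\in\LC$ then for each $t\le\min(\max f,\max g)$ the superlevel sets $\{f\ge t\}$ and $\{g\ge t\}$ are nonempty closed convex sets whose union $\{f\mx g\ge t\}$ is convex, hence they intersect, forcing $\max(f\mn g)=\min(\max f,\max g)$; the valuation identity then reduces to the reordering $\max(a,b)^q+\min(a,b)^q=a^q+b^q$. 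The functional $f\mapsto\int_{\Rn}f^q\d x$ (for $q>0$, so that it is finite on $\LC$ — here one uses that a convex function tending to $+\infty$ lies above an affine function of positive slope, so every $f\in\LC$ is dominated by some $x\mapsto e^{-a|x|+b}$ with $a>0$) is translation invariant, $\SLn$ invariant because $|\det\phi|=1$, homogeneous of degree $q$, and a valuation by the same pointwise identity. Continuity of both functionals with respect to hypo-convergence is supplied by the groundwork in Section~\ref{subse:conv_and_log_fct}.

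\emph{Necessity, Step 1 (reduction to convex bodies).} Let $\oZ$ be a continuous, equi-affinely invariant valuation, homogeneous of some degree $q\in\R$. Since $\Char_{K\cup L}=\Char_K\mx\Char_L$ and $\Char_{K\cap L}=\Char_K\mn\Char_L$, and since $K\cup L\in\Kn$ forces $K\cap L\neq\emptyset$, the map $K\mapsto\oZ(\Char_K)$ is a valuation on $\Kn$; it is continuous because $K_j\to K$ implies $\Char_{K_j}\hto\Char_K$, and it inherits translation and $\SLn$ invariance from $\oZ$. By Blaschke's classification of continuous, $\SLn$ and translation invariant valuations on $\Kn$ \cite{blaschke}, there are constants $c_0,c_n\in\R$ with $\oZ(\Char_K)=c_0+c_n V_n(K)$ for every $K\in\Kn$ ($V_n$ the $n$-dimensional volume), and homogeneity then gives $\oZ(t\,\Char_K)=t^q\big(c_0+c_n V_n(K)\big)$ for all $t>0$.

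\emph{Necessity, Step 2 (layered functions).} Given $K_1\supseteq\cdots\supseteq K_m$ in $\Kn$ and $0<t_1<\cdots<t_m$, the function $f=\max_{i=1}^{m}t_i\,\Char_{K_i}$ lies in $\LC$; for $m\ge 2$, setting $f'=\max_{i=1}^{m-1}t_i\,\Char_{K_i}\in\LC$, one checks directly that $f'\mx(t_m\Char_{K_m})=f$ and $f'\mn(t_m\Char_{K_m})=t_{m-1}\Char_{K_m}$, with all four functions in $\LC$, so the valuation property yields $\oZ(f)=\oZ(f')+\oZ(t_m\Char_{K_m})-\oZ(t_{m-1}\Char_{K_m})$. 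Inducting on $m$, with the one-layer formula of Step~1 as base case, this gives
\begin{equation*}
\oZ(f)=\sum_{i=1}^{m}\big(t_i^{q}-t_{i-1}^{q}\big)\big(c_0+c_n V_n(K_i)\big),\qquad t_0:=0,
\end{equation*}
and an Abel summation, using $\int_{\Rn}f^q\d x=\sum_{i=1}^{m}t_i^{q}\big(V_n(K_i)-V_n(K_{i+1})\big)$ with $K_{m+1}=\emptyset$, rewrites the right-hand side as $c_0(\max f)^q+c_n\int_{\Rn}f^q\d x$. Thus the claimed formula holds on all such simple functions.

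\emph{Necessity, Step 3 (approximation).} Every $f\in\LC$ is the hypo-limit of the simple functions built from finer and finer (nested) partitions of $(0,\max f]$ by the corresponding superlevel sets; this uses that $\{f\ge t\}\in\Kn$ for $0<t\le\max f$ and that an increasing sequence of upper semicontinuous functions hypo-converges to its upper semicontinuous pointwise limit. Since $\oZ$, $f\mapsto(\max f)^q$ and $f\mapsto\int_{\Rn}f^q\d x$ are all continuous on $\LC$, the representation passes to the limit. Finally, if $c_n\neq0$ then necessarily $q>0$: otherwise, applying the representation along simple functions increasing to $x\mapsto e^{-|x|}$ would force $c_n\int f_j^q\d x\to\pm\infty$, contradicting that $\oZ$ is real-valued. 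I expect the genuinely delicate point to be not the density of simple functions but the hypo-continuity of the two candidate functionals; this is special to $\LC$ and rests on the uniform tail control obtained by combining log-concavity with vanishing at infinity. Everything else is bookkeeping once Blaschke's theorem and these continuity statements are in hand.
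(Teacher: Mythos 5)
The sufficiency direction of your proposal is fine in spirit (the continuity claims are deferred to the literature, as the paper also does via Lemmas~\ref{le:v0_on_lc_is_a_val} and~\ref{le:vn_on_lc_is_a_val}). The necessity direction, however, has a fatal flaw in Step~2.

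The ``layered'' function $f=\max_{i=1}^m t_i\Char_{K_i}$ with strictly nested $K_1\supsetneq\cdots\supsetneq K_m$ and $0<t_1<\cdots<t_m$ is \emph{not} log-concave for $m\geq 2$. Indeed, $u:=-\log f$ has effective domain $K_1$ and takes the distinct finite constant values $-\log t_m<\cdots<-\log t_1$ on the ``annuli'' $K_m$, $K_{m-1}\setminus K_m$, \ldots, so $u$ is a nonconstant, piecewise-constant, finite-valued function on the interior of $K_1$; it has a jump discontinuity across $\partial K_m$ inside $\operatorname{int}K_1$, whereas a convex function is continuous on the relative interior of its domain. Concretely, with $m=2$, $K_2\subsetneq K_1$ full-dimensional and $t_1<t_2$, restricting $u$ to a segment that starts in $\operatorname{int}K_2$ and crosses $\partial K_2$ inside $\operatorname{int}K_1$ shows $u$ is not convex. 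Consequently $f\notin\LC$, and since the valuation identity on $\LC$ requires all four of $f'$, $t_m\Char_{K_m}$, $f'\mx t_m\Char_{K_m}$, $f'\mn t_m\Char_{K_m}$ to lie in $\LC$, the inductive decomposition in Step~2 cannot even be started; the density argument in Step~3 then has nothing to run on. (Step~3 also quietly assumes that an increasing sequence of upper semicontinuous functions hypo-converges to its pointwise limit, which is not automatic, but this is secondary.)

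This is precisely the obstruction that forces a more structural argument, and the paper takes a genuinely different route: it passes to the $\CV$ side, where the relevant test functions are the cone functions $\l_K+t$ and indicator functions $\Ind_K+t$, which \emph{are} in $\CV$ (their epigraphs are convex), rather than the staircase functions you attempt. The heavy lifting is done by Lemma~\ref{le:results_on_sln_inv_val} (imported from \cite{colesanti_ludwig_mussnig}), which asserts that a continuous, $\SLn$ and translation invariant valuation $\oY$ on $\CV$ satisfies $\oY(\l_K+t)=\zeta_0(t)+\psi_n(t)V_n(K)$ and $\oY(\Ind_K+t)=\zeta_0(t)+\zeta_n(t)V_n(K)$ with $\zeta_n=\frac{(-1)^n}{n!}\psi_n^{(n)}$, $\lim_{t\to\infty}\psi_n(t)=0$, and that $\oY$ is uniquely determined by $\zeta_0,\zeta_n$; homogeneity then forces $\zeta_0,\psi_n,\zeta_n$ to be multiples of $e^{-qt}$, and the vanishing of $\psi_n$ at $+\infty$ delivers the constraint $q>0$ or $c_n=0$. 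This uniqueness lemma (together with the reduction Lemma~\ref{le:reduction}) is the substitute for a naive density-of-step-functions argument, and it is genuinely needed: there is no family of piecewise-constant approximants inside $\LC$. Your proof cannot be repaired without importing a comparable reduction result.
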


\noindent
In Section~\ref{se:sln_inv_vals} we will see that $\max_{x\in\Rn} f(x)$ and $\int_{\Rn} f(x) \d x$ can be seen as functional versions of the Euler characteristic and volume, respectively (see also \cite{bobkov_colesanti_fragala}).

\section{Preliminaries}\label{se:prelim}

We collect some properties of convex bodies and convex functions. Basic references are the books by Schneider \cite{schneider} and  Rockafellar \& Wets \cite{rockafellar_wets}. In addition, we recall definitions and classification results on Minkowski valuations. 

We work in $\R^n$ and denote the canonical basis vectors by $e_1,\dots, e_n$. 
Furthermore, let $\conv(A)$ be the convex hull and $\pos(A)$ the positive hull of $A\subset \R^n$. The space of convex bodies, $\Kn$, is equipped with the \emph{Hausdorff metric}, which is given by
\begin{equation*}
\delta(K,L)=\sup\nolimits_{y\in\Sph} |h(K,y)-h(L,y)|
\end{equation*}
for $K,L\in\Kn$, where $h(K,z)=\max\{z\cdot x: x\in K\}$ is the support function of $K$ at $z\in\Rn$. 
The subspace of convex bodies in $\R^n$ containing the origin is denoted by $\Ko$. Moreover, let $\Pn$ denote the space of convex polytopes in $\R^n$ and $\Po$ the space of convex polytopes containing the origin. All these spaces are equipped with the topology coming from the Hausdorff metric.

For $p\ge 0$, a function $h:\Rn\to \R$ is \emph{p-homogeneous} if $h(t\,z)= t^p\, h(z)$ for $t\ge 0$ and $z\in\Rn$. It is \emph{sublinear} if it is $1$-homogeneous and  $h(x+y)\le h(x) +h(y)$ for $x,y\in\Rn$. Every sublinear function is the support function of a unique convex body.  Note that for the Minkowski sum of  $K,L\in\Kn$, we have
\begin{equation*}
h(K+L,z)=h(K,z)+h(L,z)
\end{equation*}
and in particular $h(K+x,z)=h(K,z)+z \cdot x$ for $x,z\in\Rn$.

\subsection{$\SLn$ Covariant Minkowski Valuations on Convex Bodies}
The \emph{moment body} $\oM K$ of $K\in\Kn$ is defined by
\begin{equation*}
h(\oM K,z) = \int_{K} |x\cdot z| \d x
\end{equation*}
for every $z\in\Sph$. Furthermore, the moment vector $\vm(K)$ of $K$ is given by
\begin{equation*}
h(\vm(K),z) = \int_{K} x\cdot z \d x
\end{equation*}
for every $z\in\Sph$. Note that the moment vector is an element of $\,\Rn$.

\goodbreak
We require the following result where the support function of certain moment bodies and moment vectors is calculated for specific vectors. Let $n\geq 2$.

\begin{lemma}[\!\!\cite{colesanti_ludwig_mussnig_mink}, Lemma 2.3]
\label{le:t_l_h}
For $\lambda>0$ and $T_\lambda=\conv\{0,\lambda\,e_1,e_2, \ldots, e_n\}$, 
\begin{equation*}
\begin{aligned}
&h(T_\lambda,e_1)= \lambda	& &h(-T_\lambda,e_1)=0\\
&h(\vm(T_\lambda),e_1)=\tfrac{\lambda^2}{(n+1)!}	\qquad & &h(\oM T_\lambda,e_1)=\tfrac{\lambda^2}{(n+1)!}.
\end{aligned}
\end{equation*}
\end{lemma}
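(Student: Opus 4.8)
The plan is to compute the four quantities directly from the definitions, exploiting that $T_\lambda=\conv\{0,\lambda e_1,e_2,\dots,e_n\}$ is a simplex whose structure in the $e_1$-direction is simple. First I would handle the two support function values. Since every point $x=(x_1,\dots,x_n)\in T_\lambda$ is a convex combination of the vertices, writing $x=\mu_0\cdot 0+\mu_1\lambda e_1+\sum_{i\ge 2}\mu_i e_i$ with $\mu_j\ge 0$, $\sum\mu_j=1$, we get $x\cdot e_1=\mu_1\lambda\in[0,\lambda]$, attaining $\lambda$ at the vertex $\lambda e_1$; hence $h(T_\lambda,e_1)=\lambda$. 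Likewise $x\cdot(-e_1)=-\mu_1\lambda\in[-\lambda,0]$ with maximum $0$ attained at the origin, so $h(-T_\lambda,e_1)=h(T_\lambda,-e_1)=0$.

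Next I would compute the two integrals, both of which reduce to $\int_{T_\lambda} x\cdot e_1\,\d x=\int_{T_\lambda} x_1\,\d x$: for the moment body we need $\int_{T_\lambda}|x_1|\,\d x$, and since $T_\lambda\subset\{x_1\ge 0\}$ (as just shown, $x_1=\mu_1\lambda\ge 0$), the absolute value is harmless and $h(\oM T_\lambda,e_1)=h(\vm(T_\lambda),e_1)$ automatically. So everything comes down to one integral. The cleanest route is the affine change of variables $\phi(y)=\phi(y_1,\dots,y_n)=(\lambda y_1,y_2,\dots,y_n)$, which maps the standard simplex $T_1=\conv\{0,e_1,\dots,e_n\}$ onto $T_\lambda$ and has Jacobian $\lambda$. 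Then
\begin{equation*}
\int_{T_\lambda} x_1\,\d x=\int_{T_1}\lambda y_1\cdot\lambda\,\d y=\lambda^2\int_{T_1} y_1\,\d y.
\end{equation*}
It remains to evaluate $\int_{T_1} y_1\,\d y$ over the standard simplex. This is a classical Dirichlet-type integral: $\int_{T_1} y_1\,\d y=\tfrac{1}{(n+1)!}$, which one can see by symmetry and the identity $\int_{T_1}(y_1+\dots+y_n)\,\d y+\int_{T_1} y_0\,\d y=\operatorname{vol}(T_1)=\tfrac1{n!}$ together with $\int_{T_1} y_i\,\d y$ being independent of $i\in\{0,1,\dots,n\}$ (where $y_0:=1-y_1-\dots-y_n$), giving each equal to $\tfrac{1}{(n+1)!}$. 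Substituting back yields $h(\vm(T_\lambda),e_1)=h(\oM T_\lambda,e_1)=\tfrac{\lambda^2}{(n+1)!}$.

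There is no real obstacle here; the only point requiring a small amount of care is the evaluation of the simplex integral $\int_{T_1} y_1\,\d y$, for which I would either cite the standard Dirichlet integral formula or give the symmetry argument above. Everything else is bookkeeping with the definitions of $h$, $\oM$, and $\vm$, plus the observation that $T_\lambda$ lies in the halfspace $\{x_1\ge 0\}$, which is what makes the moment body and moment vector coincide in the $e_1$-direction.
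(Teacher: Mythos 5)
Your proof is correct, and since the paper simply cites this lemma from \cite{colesanti_ludwig_mussnig_mink} without reproducing a proof, a direct computation like yours is exactly what is called for. The two support-function evaluations, the observation that $T_\lambda$ lies in the halfspace $\{x_1\ge 0\}$ (so the moment-body and moment-vector integrals agree in the $e_1$-direction), the affine rescaling to the standard simplex with Jacobian $\lambda$, and the symmetry argument for $\int_{T_1} y_1\,\mathrm d y = \tfrac{1}{(n+1)!}$ are all sound.
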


\noindent
The first classification of $\SLn$ covariant Minkowski valuations was established by Ludwig \cite{ludwig_minkowski}, where also the difference body operator was characterized. The following result is due to Haberl.

\begin{theorem}[\!\!\cite{haberl}, Theorem 6]
\label{thm:haberl_covariant}
For $n\geq 3$, a map $\oZ:\Ko\to\Kn$ is a continuous, $\SLn$ covariant Minkowski valuation if and only if there exist constants $c_1,c_2,c_4\geq 0$ and $c_3\in\R$ such that
\begin{equation*}
\oZ K=c_1 K + c_2 (-K) + c_3 \vm(K) + c_4 \oM(K),
\end{equation*}
for every $K\in\Ko$.
\end{theorem}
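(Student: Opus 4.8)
The plan is to verify the ``if'' direction by direct computation and to prove the ``only if'' direction along the standard route for $\SLn$ covariant Minkowski valuations: reduce to polytopes, then to a one-parameter family of simplices, and finally solve a functional equation. For the ``if'' direction I would check that $K\mapsto K$, $K\mapsto -K$, $K\mapsto\vm(K)$ and $K\mapsto\oM(K)$ are continuous $\SLn$ covariant Minkowski valuations. For the moment vector and the moment body this follows from the change of variables formula applied to their defining integrals (every $\phi\in\SLn$ has Jacobian~$1$) together with inclusion--exclusion for integrals over $K\cup L$ and $K\cap L$; for the identity and the reflection, continuity is clear and the valuation property is the identity $(K\cup L)+(K\cap L)=K+L$, valid whenever $K\cup L\in\Kn$. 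Since $c_1,c_2,c_4\ge 0$, the Minkowski combination $c_1 K+c_2(-K)+c_4\oM(K)$ is a convex body, and translating it by the point $c_3\vm(K)$ keeps it in $\Kn$; hence the displayed map is again a continuous $\SLn$ covariant Minkowski valuation.

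For the converse, the first task is to control $\oZ$ on the low-dimensional skeleton. As $\SLn$ acts transitively on $\Rn\setminus\{0\}$ when $n\ge 2$, the only bounded nonempty subset of $\Rn$ invariant under $\SLn$ is $\{0\}$, so $\SLn$ covariance forces $\oZ(\{0\})=\{0\}$. Since a convex body fixed by every transvection of a hyperplane $H$ must be contained in $H$, analysing the relevant stabilizers shows that $\oZ$ of a polytope lying in a hyperplane through the origin is again contained in that hyperplane; an induction on the dimension then shows that such values have the form $c_1 K+c_2(-K)$, using that $\vm$ and $\oM$ vanish on lower-dimensional sets and that the exceptional two-dimensional operators fail to be $\SLn$ covariant. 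By continuity and the density of $\Po$ in $\Ko$ it now suffices to determine $\oZ$ on $\Po$. Coning each $P\in\Po$ from the origin over the facets not containing $0$ and triangulating those facets yields a dissection of $P$ into $n$-simplices each having $0$ as a vertex, with all faces appearing in the resulting inclusion--exclusion again containing $0$; since convex bodies satisfy R\aa dstr\"om cancellation under Minkowski addition, $\oZ$ is thereby determined by its values on $n$-simplices with a vertex at the origin. Every such simplex is the image under a map in $\SLn$ of $T_\lambda=\conv\{0,\lambda e_1,e_2,\dots,e_n\}$ for the unique $\lambda>0$ equal to $n!$ times its volume, so $\oZ$ is determined by the function $g(\lambda):=\oZ(T_\lambda)$ together with $\SLn$ covariance.

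It remains to determine $g$. For each $\lambda$ the body $g(\lambda)$ must be invariant under the stabilizer of $T_\lambda$ in $\SLn$, and, by gluing two $n$-simplices with a common facet through the origin into a third such simplex and feeding this into (\ref{eq:val}), one obtains a functional equation relating $g$ at different arguments. Applying support functions and using continuity of $\oZ$, this equation forces $h(g(\lambda),z)$ to be polynomial in $\lambda$ of degree at most two for each $z$; by Lemma~\ref{le:t_l_h} the degree-one part is matched by $c_1 T_\lambda+c_2(-T_\lambda)$ and the degree-two part by $c_3\vm(T_\lambda)+c_4\oM(T_\lambda)$, while sublinearity of $h(\oZ(\cdot),\cdot)$ (needed for $\oZ(K)\in\Kn$) and $\SLn$ covariance pin down the remaining freedom and force $c_1,c_2,c_4\ge 0$ and $c_3\in\R$. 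This is the step where $n\ge 3$ is essential: for $n=2$ the rotation by a right angle already lies in $\operatorname{SL}(2)$, so additional $\operatorname{SL}(2)$ covariant operators become available and the conclusion fails.

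I expect the main obstacle to be the simultaneous handling of the functional equation and the stabilizer constraints while keeping every polytope that appears inside $\Po$ and working only with the semigroup structure of Minkowski addition. Concretely, one must propagate $g(\lambda)$ through $\SLn$ covariance to all simplices, check that these values are consistent on the overlaps of the various dissections, treat the lower-dimensional faces carefully (where only the identity and reflection terms survive), and extract exactly the four scalar parameters --- all without subtraction, so that only nonnegative Minkowski combinations together with a translation by the moment vector can arise. Making this bookkeeping close, and confirming that $n\ge 3$ is precisely what rules out the two-dimensional exceptional operators, are the delicate points.
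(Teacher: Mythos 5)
The paper does not prove this statement: it is cited from Haberl (Theorem~6) and used as a black box in Corollary~\ref{cor:haberl_covariant} and Lemma~\ref{le:constants}. So there is no internal proof to compare against, and I can only assess your outline on its own terms.

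Your sketch follows the right general strategy (Ludwig/Haberl): reduce by continuity, triangulation, and R\aa dstr\"om cancellation to the one-parameter family $T_\lambda$, then solve functional equations subject to stabilizer constraints. But one central step, as stated, would fail. You claim that the gluing functional equation plus continuity forces $h(\oZ(T_\lambda),z)$ to be a polynomial in $\lambda$ of degree at most two \emph{for each fixed $z$}. This is false even for the operators in the conclusion: $h(T_\lambda,z)=\max(0,\lambda z_1,z_2,\dots,z_n)$ is only piecewise linear in $\lambda$, and $h(\oM T_\lambda,z)=\int_{T_\lambda}|x\cdot z|\,\d x$ is not polynomial in $\lambda$ for generic $z$. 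The quadratic structure holds only along special directions such as $\pm e_1$, which is exactly why Lemma~\ref{le:t_l_h} records values at $e_1$ only. The actual argument must analyse the support function direction by direction, use the stabilizer of $T_\lambda$ (permutations of $e_2,\dots,e_n$ and appropriate shears) to reduce to finitely many representative directions, and then solve a more elaborate system of functional equations; a single Cauchy equation in $\lambda$ does not suffice. Relatedly, ``gluing two $n$-simplices with a common facet through the origin'' produces lower-dimensional faces in the inclusion--exclusion, so the functional equation only closes once the lower-dimensional case is fully controlled; that step in turn requires an argument inside a hyperplane $H\ni 0$ where the acting group is $\operatorname{SL}(H)\cong\operatorname{SL}(n-1)$, and one must separately show that the moment-type terms vanish there rather than asserting it. Finally, your explanation of the $n\geq 3$ hypothesis (that a right-angle rotation lies in $\operatorname{SL}(2)$) gestures at the right phenomenon but is not a proof that the classification fails for $n=2$; you would need to exhibit the additional $\operatorname{SL}(2)$ covariant Minkowski valuations explicitly.
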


\noindent
We say that a Minkowski valuation $\oZ:\Kn\to\Kn$ is \emph{translation covariant} if there exists a function $\oZ^0:\Kn\to\R$ associated with $\oZ$ such that
\begin{equation*}
\oZ(K+x)=\oZ(K)+\oZ^0(K)x,
\end{equation*}
for every $K\in\Kn$ and $x\in\Rn$. Since several important geometric operators have this property, translation covariant valuations have attracted interest. For example, the identity on $\Kn$ and the reflection $K\mapsto -K$ are translation covariant. Furthermore, for $z\in\Sph$ we have
\begin{equation}
\label{eq:mv_trans}
h(\vm(K+x),z)=\int_{K+x} y \cdot z  \d y = \int_K (y+x)\cdot z \d y = \int_K y\cdot z \d y + V_n(K)\, (z\cdot x),
\end{equation}
and hence $\vm(K+x)=\vm(K)+V_n(K)x$ for every $K\in\Kn$ and $x\in\Rn$. Based on Schneider's characterization of the Steiner point \cite{schneider_steiner_point}, Hadwiger~\&~Schneider \cite{hadwiger_schneider} proved that the quermassvectors form a basis of the space of continuous, rotation and translation covariant vector-valued valuations. In \cite{mcmullen_weakly_cont_val}, McMullen characterized weakly continuous and translation covariant vector-valued valuations on convex polytopes, extending a previous result by Hadwiger \cite{hadwiger1952}. In his result the intrinsic moment vectors of the faces of a polytope appear. For further results on translation covariant valuations see \cite{mcmullen_schneider, mcmullen_handbook}.

\begin{corollary}
\label{cor:haberl_covariant}
For $n\geq 3$, a map $\oZ:\Kn\to\Kn$ is a continuous, $\SLn$ and translation covariant Minkowski valuation if and only if there exist constants $c_1,c_2\geq 0$ and $c_3\in\R$ such that
\begin{equation}
\label{eq:equi_aff_val}
\oZ K = c_1 K + c_2 (-K) + c_3 \vm(K),
\end{equation}
for every $K\in\Kn$.
\end{corollary}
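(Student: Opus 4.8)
The "if" direction is straightforward: the identity $K\mapsto K$, the reflection $K\mapsto -K$, and the moment vector $K\mapsto\vm(K)$ are all continuous Minkowski valuations on $\Kn$ (valuations because support functions add under Minkowski sum and integrals are additive, respectively), each is $\SLn$ covariant, and each is translation covariant: the first with $\oZ^0\equiv 0$, the second with $\oZ^0\equiv 0$ (since $(-K)+x = -(K-x)$ means $\oZ(K+x)=-K+x=\oZ(K)+x$, so actually $\oZ^0\equiv 1$), and the third with $\oZ^0 = V_n$ by \eqref{eq:mv_trans}. A nonnegative linear combination of Minkowski valuations is again a Minkowski valuation, so \eqref{eq:equi_aff_val} defines a map with all the required properties.

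For the "only if" direction, the plan is to reduce to Theorem~\ref{thm:haberl_covariant} by restricting $\oZ$ to $\Ko$. Given a continuous, $\SLn$ and translation covariant Minkowski valuation $\oZ:\Kn\to\Kn$, its restriction $\oZ|_{\Ko}$ is still a continuous, $\SLn$ covariant Minkowski valuation on $\Ko$, so Theorem~\ref{thm:haberl_covariant} yields constants $c_1,c_2,c_4\ge 0$ and $c_3\in\R$ with $\oZ K = c_1 K + c_2(-K) + c_3\vm(K) + c_4\oM(K)$ for all $K\in\Ko$. The key remaining step is to show that translation covariance forces $c_4=0$. For this I would apply the translation covariance relation to a single well-chosen body: take $K\in\Ko$ and compare $\oZ(K+x)$ with $\oZ(K)+\oZ^0(K)x$, using the known translation behavior of each of the four operators. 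The identity contributes $x$, the reflection contributes $x$, the moment vector contributes $V_n(K)x$, but $\oM$ is \emph{not} translation covariant — $h(\oM(K+x),z) = \int_{K+x}|y\cdot z|\,\d y = \int_K |(y+x)\cdot z|\,\d y$, which is not in general of the form $h(\oM K,z) + (\text{linear in }z)$. Evaluating the identity $\oZ(K+x) = \oZ(K)+\oZ^0(K)x$ on support functions at a vector $z$ and at $-z$, then adding, the contributions of $K$, $-K$, and $\vm(K)$ and the term $\oZ^0(K)(z\cdot x)$ all cancel or combine linearly, leaving an identity of the form $c_4\big(h(\oM(K+x),z)+h(\oM(K+x),-z)\big) = c_4\big(h(\oM K,z)+h(\oM K,-z)\big)$, i.e. $c_4\int_K\big(|(y+x)\cdot z| - |y\cdot z|\big)\,\d y = 0$ for all $x$. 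Choosing, say, $K$ to be a small ball around the origin and $z=e_1$, the integrand is strictly positive once $x$ is large enough in the $e_1$ direction, forcing $c_4=0$.

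Once $c_4=0$ we have $\oZ K = c_1 K + c_2(-K) + c_3\vm(K)$ for all $K\in\Ko$. It remains to extend this to all of $\Kn$: for arbitrary $K\in\Kn$ pick $x\in\Rn$ with $K+x\in\Ko$ (e.g. translate so the origin lies in $K$), apply the formula to $K+x$, and use translation covariance of both $\oZ$ and of each operator on the right-hand side to transfer back to $K$; the $\oZ^0$ term on the left must then match $c_1 x + c_2 x + c_3 V_n(K)x$, which is consistent and pins down $\oZ^0(K) = (c_1+c_2)+c_3 V_n(K)$, and one reads off $\oZ K = c_1 K + c_2(-K) + c_3\vm(K)$ for every $K\in\Kn$.

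The main obstacle is the step isolating $c_4=0$: one has to extract, from the vector identity $\oZ(K+x)=\oZ(K)+\oZ^0(K)x$, a scalar consequence in which the three translation-covariant pieces genuinely drop out. Passing to support functions and symmetrizing in $z\mapsto -z$ (so that the affine-in-$z$ parts of $h(K,\cdot)$, $h(-K,\cdot)$, $h(\vm(K),\cdot)$ and of $\oZ^0(K)(z\cdot x)$ are handled cleanly) is the device that makes this work, after which positivity of $\int_K\big(|(y+x)\cdot z|-|y\cdot z|\big)\,\d y$ for suitable $K,z,x$ finishes it.
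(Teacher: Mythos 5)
Your plan follows the same route as the paper: restrict to $\Ko$, invoke Theorem~\ref{thm:haberl_covariant}, use translation covariance to force $c_4=0$, then translate the formula to all of $\Kn$. The one genuine variation is the device used to cancel the translation-covariant parts in the $c_4=0$ step. You symmetrize in the support-function argument ($z\mapsto -z$), so that the contributions of $K$, $-K$, $\vm(K)$ and of $\oZ^0(K)(x\cdot z)$ drop out and only the $\oM$-term survives. The paper instead symmetrizes in the translation: it builds the elongated polytope $P=[-e_1,2e_1]+[0,e_2]+\cdots+[0,e_n]$, for which $P,P\pm e_1\in\Ko$, uses $2\oZ(P)=\oZ(P+e_1)+\oZ(P-e_1)$ to drop the $\oZ^0(P)e_1$ term, and compares the explicit values of $h(\oM\,\cdot\,,e_1)$. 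Both devices are valid and achieve exactly the same cancellation, so I would count your argument as the paper's approach with a different symmetrization.

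Three slips should be repaired, though none is structural. First, the representation coming from Theorem~\ref{thm:haberl_covariant} holds only on $\Ko$, so to expand $\oZ(K+x)$ you must have $K+x\in\Ko$ as well; for $K$ a ball of radius $r$ about the origin this forces $|x|\le r$, so the phrase ``once $x$ is large enough'' must be replaced by a \emph{small} translate. (The paper circumvents this by taking $P$ elongated in the $e_1$-direction so that $P\pm e_1$ still contain the origin.) Second, the integrand $|(y+x)\cdot z|-|y\cdot z|$ is \emph{not} pointwise positive on $K$ --- it is negative near $y=-x$; what you need is that the integral $\int_K\bigl(|(y+x)\cdot z|-|y\cdot z|\bigr)\d y$ is positive, which holds because $t\mapsto\int_K|y\cdot e_1+t|\d y$ is an even convex function of $t$ with derivative $V_n(K\cap\{y_1>-t\})-V_n(K\cap\{y_1<-t\})>0$ for $0<t<r$. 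Third, the associated functions in your ``if'' direction are off: the identity has $\oZ^0\equiv 1$, and since $-(K+x)=-K-x$ the reflection has $\oZ^0\equiv -1$, not $0$ or $+1$; this propagates to your final sentence, where the correct formula is $\oZ^0(K)=c_1-c_2+c_3V_n(K)$, not $c_1+c_2+c_3V_n(K)$.
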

\begin{proof}
We have already seen in Theorem~\ref{thm:haberl_covariant} and (\ref{eq:mv_trans}) that (\ref{eq:equi_aff_val}) defines a continuous, $\SLn$ and translation covariant Minkowski valuation. Conversely, let $\oZ$ be a continuous Minkowski valuation on $\Kn$ that is $\SLn$ and translation covariant. Obviously, the restriction of $\oZ$ to $\Ko$ is a continuous, $\SLn$ covariant Minkowski valuation. Hence, by Theorem \ref{thm:haberl_covariant} there exist constants $c_1,c_2,c_4\geq 0$ and $c_3\in\R$ such that
\begin{equation}
\label{eq:z_on_ko}
\oZ K = c_1 K + c_2 (-K) + c_3 \vm(K) + c_4 \oM K,
\end{equation}
for every $K\in\Ko$. Define the polytope $P$ as $
P:=[-e_1,2 e_1]+[0,e_2] + \cdots [0,e_n]$ and note that $P, P+e_1, P-e_1 \in\Ko$. By the translation covariance of $\oZ$ we obtain
\begin{align*}
\begin{split}
\oZ(P)+\oZ^0(P)e_1 &= \oZ(P+e_1)\\
&= c_1 P + c_1 e_1 + c_2 (-P) - c_2 e_1 + c_3 \vm(P) + V_n(P) e_1 +c_4 \oM (P+e_1),\\
\oZ(P)-\oZ^0(P)e_1 &= \oZ(P-e_1)\\
&= c_1 P - c_1 e_1 + c_2 (-P) + c_2 e_1 + c_3 \vm(P) - V_n(P) e_1 + c_4 \oM (P-e_1).
\end{split}
\end{align*}
Adding these equations shows that
\begin{align*}
2 \oZ(P) &= \oZ(P+e_1)+\oZ(P-e_1)\\
&= 2 c_1 P + 2 c_2 (-P) + 2 c_3 \vm(P) + c_4 ( \oM (P+e_1) + \oM (P-e_1)).
\intertext{On the other hand by (\ref{eq:z_on_ko})}
2 \oZ(P) &= 2 c_1 P + 2c_2 (-P) + 2 c_3 \vm(P) + 2 c_4 \oM P. 
\end{align*}
Evaluating and comparing the support functions of these two representations of $2\oZ(P)$ at $e_1$ gives
\begin{equation*}
2 c_4 \tfrac 52 = c_4 ( \tfrac 92 + \tfrac 52),
\end{equation*}
and therefore $c_4=0$. Furthermore, this shows that $\oZ^0(K)=c_1-c_2 + c_3 V_n(K)$ for every $\K\in\Ko$. Now, fix an arbitrary $K\in\Kn$. Then, there exist $K_o\in\Ko$ and $x\in\Rn$ such that $K=K_o+x$. By the properties of $\oZ$ this gives
\begin{align*}
\begin{split}
\oZ(K)&=\oZ(K_o+x)\\&=\oZ(K_o)+\oZ^0(K_o)x\\
&=c_1 K_o + c_2 (-K_o) + c_3 \vm(K_o) + (c_1-c_2+V_n(K_o))x\\
&= c_1 K + c_2 (-K) + c_3 \vm(K).
\end{split}
\end{align*}
\end{proof}

\subsection{Convex and Log-Concave Functions}
\label{subse:conv_and_log_fct}
Let $f=e^{-u}\in\LC$. Then, $u:\Rn\to(-\infty,+\infty]$ is convex, lower-semicontinuous, proper and coercive. Here we say that $u$ is \emph{proper} if $u\neq +\infty$. Furthermore, $u$ is called \emph{coercive} if
\begin{equation*}
\lim_{|x|\to+\infty} u(x)=+\infty.
\end{equation*}
The set of all such functions $u$ will be denoted by $\CV$. Obviously,
\begin{equation*}
\LC=\{e^{-u}\,:\,u\in\CV\}.
\end{equation*}
Furthermore, for a function $u\in\CV$, the \emph{domain}, $\dom u=\{x\in\Rn\,:\,u(x)<+\infty\}$, of $u$ is a convex subset of $\Rn$. Moreover, its \emph{epigraph}
\begin{equation*}
\epi u =\{(x,t)\in\Rn\times \R\,:\, u(x)\leq t\},
\end{equation*}
is a closed convex subset of $\Rn\times\R$. Hence, every function $u\in\CV$ attains its minimum and $\min_{x\in\Rn} u(x)>-\infty$. Furthermore, for $t\in\R$, the \emph{sublevel set}
\begin{equation*}
\{u\leq t\}=\{x\in\Rn\,:\,u(x)\leq t\},
\end{equation*}
is either a convex body or empty. Note that for $u,v\in\CV$ and $t\in\R$
\begin{equation*}
\{u\wedge v \leq t\} = \{u\leq t\} \cup \{v\leq t\}\qquad\text{ and }\qquad \{u\vee v\leq t\}= \{u\leq t\} \cap \{v\leq t\},
\end{equation*}
where for $u\wedge v\in\CV$ all occurring sublevel sets are either empty or in $\Kn$. Equivalently, every function $f\in\LC$ attains its maximum and for $0<t\leq \max_{x\in\Rn} f(x)$ the \emph{superlevel set}
\begin{equation*}
\{f\geq t\}=\{x\in\Rn\,:\,f(x)\geq t\},
\end{equation*}
is a convex body. Moreover, for every $f,g\in\LC$ and $t>0$
\begin{equation*}
\{f\wedge g \geq t\} = \{f\geq t\} \cap \{g\geq t\}\qquad\text{ and }\qquad \{f\vee g\geq t\}= \{f\geq t\} \cap \{g\geq t\}.
\end{equation*}
\noindent
We equip $\CV$ with the topology associated to epi-convergence, which is the standard topology for a space of extended real-valued convex functions. Here a sequence $u_k: \Rn\to (-\infty, \infty]$ is \emph{epi-convergent} to $u:\Rn\to (-\infty, \infty]$ if for all $x\in\Rn$ the following conditions hold:
\begin{itemize}
	\item[(i)] For every sequence $x_k$ that converges to $x$,
			\begin{equation*}\label{eq:gc_inf}
				u(x) \leq \liminf\nolimits_{k\to \infty} u_k(x_k).
			\end{equation*}
	\item[(ii)] There exists a sequence $x_k$ that converges to $x$ such that
			\begin{equation*}\label{eq:gc_sup}
				u(x) = \lim\nolimits_{k\to\infty} u_k(x_k).
			\end{equation*}
\end{itemize}
\vskip -8pt
In this case we write $u=\elim_{k\to\infty} u_k$ and $u_k \eto u$. We remark that epi-convergence is also called $\Gamma$-convergence. Correspondingly, we say that a sequence $f_k$ in $\LC$ is \emph{hypo-convergent} to $f\in\LC$ if there exist $u_k,u\in\CV$ such that $f_k=e^{-u_k}$ for every $k\in\N$, $f=e^{-u}$ and $u_k\eto u$. In this case we write $f=\hlim_{k\to\infty} f_k$ and $f_k \hto f$.

The following results connect epi-convergence and Hausdorff convergence of sublevel sets. We say that $\{u_k \leq t\} \to \emptyset$ as $k\to\infty$ if there exists $k_0\in\N$ such that $\{u_k \leq t\} = \emptyset$ for all $k\geq k_0$.

\begin{lemma}[\!\cite{colesanti_ludwig_mussnig}, Lemma 5]
\label{le:lk_conv}
Let $u_k,u\in\CV$. If $u_k\eto u_k$, then $\{u_k\leq t\} {\to} \{u\leq t\}$ for every $t\in\R$ with $t\neq \min_{x\in\Rn} u(x)$.
\end{lemma}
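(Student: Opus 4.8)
The plan is to derive Hausdorff convergence of the sublevel sets directly from the two conditions (i) and (ii) defining epi-convergence, using convexity and the coercivity of the limit to control the sets near infinity. Since $u\in\CV$, its minimum $\mu:=\min_{x\in\Rn}u(x)$ is finite and attained at some $y_0\in\Rn$; we treat the cases $t>\mu$ and $t<\mu$ separately, the excluded value $t=\mu$ being genuinely problematic (e.g.\ for $u_k=u+\tfrac1k$ one has $\{u_k\le\mu\}=\emptyset$ while $\{u\le\mu\}=\{y_0\}\neq\emptyset$).

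Suppose first $t>\mu$, so that $K:=\{u\le t\}$ is a nonempty convex body. I would show that $\{u_k\le t\}$ is eventually nonempty and contained in a fixed ball and that it converges to $K$ in the Kuratowski sense; for uniformly bounded nonempty compact sets this is equivalent to Hausdorff convergence. The inclusion $K\subseteq\liminf_k\{u_k\le t\}$: if $u(x)<t$, condition (ii) gives $x_k\to x$ with $u_k(x_k)\to u(x)<t$, hence $x_k\in\{u_k\le t\}$ eventually; if $u(x)=t$, apply this to the points $(1-\lambda)x+\lambda y_0$, which by convexity satisfy $u\le(1-\lambda)t+\lambda\mu<t$, and let $\lambda\to0^+$, extracting a diagonal sequence. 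In particular $y_0\in K$ yields that $\{u_k\le t\}$ is eventually nonempty. The inclusion $\limsup_k\{u_k\le t\}\subseteq K$: if $x_{k_j}\in\{u_{k_j}\le t\}$ and $x_{k_j}\to x$, then condition (i) gives $u(x)\le\liminf_j u_{k_j}(x_{k_j})\le t$. Finally, to exclude escape to infinity, suppose along a subsequence there are $z_k\in\{u_k\le t\}$ with $|z_k|\to\infty$; taking $y_k\to y_0$ with $u_k(y_k)\to\mu<t$ from (ii), convexity of $u_k$ forces $u_k\le t$ on the whole segment $[y_k,z_k]$ for large $k$, so for each fixed $\rho>0$ the intermediate value theorem produces $v_k$ on this segment with $|v_k-y_0|=\rho$ and $u_k(v_k)\le t$; a convergent subsequence together with (i) yields a point of $K$ at distance exactly $\rho$ from $y_0$, for every $\rho>0$, contradicting coercivity of $u$.

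Suppose next $t<\mu$, so $\{u\le t\}=\emptyset$; I claim $\{u_k\le t\}=\emptyset$ for $k$ large. Otherwise, pass to a subsequence with $x_k\in\{u_k\le t\}$. If $(x_k)$ has a bounded subsequence, a convergent sub-subsequence $x_{k_j}\to x$ and (i) give $u(x)\le t<\mu$, which is impossible. If $|x_k|\to\infty$, fix $y$ with $u(y)<\infty$ and take $y_k\to y$ with $u_k(y_k)\to u(y)$ from (ii), so $u_k(y_k)\le M$ for some $M$ and all large $k$ (and then $t<\mu\le u(y)\le M$); after passing to a subsequence along which $d_k:=(x_k-y_k)/|x_k-y_k|\to d$, convexity gives $u_k(y_k+\rho d_k)\le(1-\lambda_k)M+\lambda_k t\le M$ with $\lambda_k=\rho/|x_k-y_k|\to0$, so (i) yields $u(y+\rho d)\le M$ for every $\rho>0$, again contradicting coercivity. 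Hence $\{u_k\le t\}\to\emptyset=\{u\le t\}$.

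The inclusions $K\subseteq\liminf_k\{u_k\le t\}$ and $\limsup_k\{u_k\le t\}\subseteq K$ are the routine part, read off from (i) and (ii). The main obstacle is the behaviour at infinity, i.e.\ the equi-boundedness when $t>\mu$ and the unbounded sub-case when $t<\mu$: both are handled by the same device, namely choosing a point with a controlled function value, moving along a segment towards a distant point of a sublevel set, invoking convexity along that segment, and closing with the coercivity of the limit $u$.
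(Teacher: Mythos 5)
This lemma is quoted in the paper from \cite{colesanti_ludwig_mussnig} without a proof, so there is no internal argument here to compare against; note also that the stated hypothesis reads ``$u_k\eto u_k$,'' a typo for ``$u_k\eto u$,'' which you correctly read through. Your proof is correct and self-contained. The structure --- split on $t>\min u$ versus $t<\min u$, prove $\{u\le t\}\subseteq\liminf_k\{u_k\le t\}$ and $\limsup_k\{u_k\le t\}\subseteq\{u\le t\}$ directly from conditions (ii) and (i), and close the lower inclusion at boundary points $u(x)=t$ by dilating towards the minimizer and using that the Kuratowski lower limit is closed --- is the right route, and both ``escape to infinity'' arguments (equi-boundedness of $\{u_k\le t\}$ for $t>\min u$, eventual emptiness for $t<\min u$) correctly combine convexity of the $u_k$ along segments with coercivity of the limit $u$. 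Two small points would be worth making explicit in a polished version. First, when you extract a convergent subsequence $v_{k_j}\to v$ and then apply condition (i), you are tacitly using that epi-convergence passes to subsequences; this is true (Kuratowski convergence of the epigraphs restricts to subsequences), but condition (i) as written quantifies only over the full sequence, so the reduction deserves a sentence. Second, in the upgrade from Kuratowski to Hausdorff convergence you should state that coercivity makes every nonempty $\{u_k\le t\}$ compact, so that together with the uniform bound you are comparing compact convex sets contained in a fixed ball, where the two modes of convergence agree. Relative to the cited source, which leans on the Rockafellar--Wets epi-convergence toolbox (cf.\ the companion Lemma~\ref{le:epi_lvl_sets} imported from \cite{rockafellar_wets}), your argument is more elementary and works directly from the definition, at the price of a somewhat longer exposition.
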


\begin{lemma}[\!\!\cite{rockafellar_wets}, Proposition 7.2]
\label{le:epi_lvl_sets} 
Let $u_k, u \in\CV$. If for each $t\in\R$ there exists a sequence $t_k$ of reals convergent to $t$ with $\{u_k\leq t_k\} \to \{u\leq t\}$, then $u_k \eto u$.  \end{lemma}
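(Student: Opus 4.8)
The plan is to verify directly the two defining conditions of epi-convergence at every $x\in\Rn$: the $\liminf$-inequality $u(x)\le\liminf_{k}u_k(x_k)$ along every sequence $x_k\to x$, and the existence of a recovery sequence $x_k\to x$ with $u_k(x_k)\to u(x)$. Throughout I would use the hypothesis only through the elementary transfer property of the Hausdorff metric: if $K_k\to K$ with $K\in\Kn$, then every $z\in K$ is a limit of points $z_k\in K_k$, and every limit of points $z_k\in K_k$ lies in $K$; moreover the stated convention guarantees that $\{u\le t\}=\emptyset$ forces $\{u_k\le t_k\}=\emptyset$ for all large $k$.

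First I would establish the $\liminf$-inequality. Fix $x$ and a sequence $x_k\to x$, put $\ell=\liminf_{k}u_k(x_k)$, and assume $\ell<+\infty$ (otherwise there is nothing to prove). For an arbitrary $t>\ell$ take the sequence $t_k\to t$ granted by the hypothesis; since $t_k\to t>\ell$, for infinitely many (large) $k$ one has $u_k(x_k)<\tfrac{\ell+t}{2}<t_k$, so $x_k\in\{u_k\le t_k\}$ for infinitely many $k$. Hence $\{u_k\le t_k\}$ is not eventually empty, so $\{u\le t\}\in\Kn$, and the transfer property applied along that subsequence yields $x\in\{u\le t\}$, i.e.\ $u(x)\le t$. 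Letting $t\downarrow\ell$ gives $u(x)\le\liminf_{k}u_k(x_k)$.

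For the recovery sequence I would first note that the previous step already gives $\liminf_{k}u_k(x_k)\ge u(x)$ for \emph{every} sequence $x_k\to x$, so it suffices to produce one sequence with $\limsup_{k}u_k(x_k)\le u(x)$. If $x\notin\dom u$, take $x_k=x$: the $\liminf$-inequality applied to this constant sequence forces $u_k(x)\to+\infty=u(x)$. If $x\in\dom u$, then for each $m\in\N$ apply the hypothesis at level $s_m=u(x)+\tfrac1m$ (so that $x\in\{u\le s_m\}\in\Kn$) to obtain $t_k^{(m)}\to s_m$ with $\{u_k\le t_k^{(m)}\}\to\{u\le s_m\}$; since $x$ lies in the limit set, the transfer property provides points $y_k^{(m)}\in\{u_k\le t_k^{(m)}\}$ with $y_k^{(m)}\to x$ as $k\to\infty$, and then $u_k(y_k^{(m)})\le t_k^{(m)}$. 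A diagonal choice — pick $N_1<N_2<\cdots$ so that $|y_k^{(m)}-x|<\tfrac1m$ and $t_k^{(m)}<u(x)+\tfrac2m$ for all $k\ge N_m$, and set $x_k=y_k^{(m)}$ for $N_m\le k<N_{m+1}$ (and $x_k=x$ for $k<N_1$) — yields $x_k\to x$ and $\limsup_{k}u_k(x_k)\le u(x)$. Combined with the first step this gives $u_k(x_k)\to u(x)$, completing the verification that $u_k\eto u$.

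I expect the only real obstacle to be the recovery-sequence construction for $x\in\dom u$: one must diagonalize the families $(y_k^{(m)})_k$ over $m$ while simultaneously controlling both the distance $|y_k^{(m)}-x|$ and the level $u_k(y_k^{(m)})$, and one must stay careful about the ``$\to\emptyset$'' convention so that a nonempty limit sublevel set is invoked only when the sets $\{u_k\le t_k\}$ are genuinely nonempty for all large $k$. The remaining ingredients are just the standard fact that Hausdorff convergence of convex bodies transfers membership in the limit, together with the freedom to choose $t$ (respectively $s_m$) slightly above the relevant value so that the limit sublevel sets are automatically nonempty; in particular the excluded level $t=\min u$ never needs to be used.
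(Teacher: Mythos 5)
The paper supplies no proof of this lemma; it is cited verbatim from Rockafellar \& Wets, Proposition~7.2, where it is part of a broader equivalence between epi-convergence of lower semicontinuous functions and Painlev\'e--Kuratowski set-convergence of sublevel sets, obtained through the general machinery of set-convergence of epigraphs. There is therefore no in-paper argument to compare against, and your task was effectively to reprove a cited fact.

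Your direct verification is correct, and it is a genuinely more elementary route than the one in the cited source: you exploit the extra structure of $\CV$, where coercivity makes every nonempty sublevel set compact, so that the hypothesis can be read as ordinary Hausdorff convergence of convex bodies (with the paper's empty-set convention), and the two ``membership transfer'' properties you invoke are exactly the two halves of Hausdorff convergence. The liminf step correctly turns the existence of infinitely many $k$ with $x_k\in\{u_k\le t_k\}$ into non-eventual-emptiness, hence $\{u\le t\}\neq\emptyset$, hence $x\in\{u\le t\}$, and letting $t\downarrow\ell$ is sound; it is worth noting (you gesture at it) that the subcase $\ell<\min u$ needs no separate treatment, since for $t\in(\ell,\min u)$ the same observation contradicts the eventually-empty convention, forcing $\ell\ge\min u$ a posteriori. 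The recovery step with levels $s_m=u(x)+1/m$ and the diagonal in $(k,m)$ is also fine, provided $N_m$ is taken large enough that $\{u_k\le t_k^{(m)}\}$ is nonempty for $k\ge N_m$ so that $y_k^{(m)}$ exists; this is automatic because the Hausdorff limit $\{u\le s_m\}$ is nonempty. In short: the R\&W proof buys generality (no convexity or coercivity), while your proof buys transparency in the compact sublevel-set setting that the paper actually works in.
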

\noindent
Furthermore, the so-called cone property and uniform cone property will be useful in order to show that certain integrals converge.

\begin{lemma}[\!\!\cite{colesanti_fragala}, Lemma 2.5]
\label{le:cone}
For $u\in\CV$ there exist constants $a,b \in \R$ with $a >0$ such that
\begin{equation*}
u(x)>a|x|+b
\end{equation*}
for every $x\in\Rn$.
\end{lemma}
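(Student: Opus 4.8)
The plan is to use coercivity only through the two consequences recorded just above: every $u\in\CV$ attains its minimum $m=\min_{x\in\Rn}u(x)\in\R$ at some point $x_0$, and every non-empty sublevel set $\{u\le t\}$ is a convex body, hence bounded. I would apply this with $t=m+1$: since $x_0\in\{u\le m+1\}$ this set is non-empty, so I may fix $\rho>0$ with $\{u\le m+1\}\subseteq B(0,\rho)$, where $B(0,\rho)$ denotes the closed ball; note $|x_0|\le\rho$. For $x\notin\dom u$ the desired inequality is trivial, so only $x\in\dom u$ needs attention.

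The main step is a one-dimensional convexity estimate along the segment from $x_0$ to $x$. Given $x$ with $|x|>2\rho$, the intermediate value theorem, applied to $t\mapsto|(1-t)x_0+tx|$ which runs from $|x_0|\le\rho$ up to $|x|>2\rho$, produces $\lambda\in(0,1)$ such that $z:=(1-\lambda)x_0+\lambda x$ satisfies $|z|=2\rho$; since $|z|>\rho$ we get $z\notin\{u\le m+1\}$, hence $u(z)>m+1$. Convexity gives $u(z)\le(1-\lambda)m+\lambda u(x)$, and combining the two inequalities yields $u(x)>m+1/\lambda$. From $z-x_0=\lambda(x-x_0)$ one obtains $\lambda=|z-x_0|/|x-x_0|\le(2\rho+|x_0|)/(|x|-|x_0|)\le 3\rho/(|x|-\rho)$, so $1/\lambda\ge|x|/(3\rho)-1/3$, and therefore $u(x)>m+|x|/(3\rho)-1/3$ for all $x$ with $|x|>2\rho$.

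To finish I would set $a:=1/(3\rho)>0$ and $b:=m-1$. If $|x|>2\rho$, the estimate above gives $u(x)>m+|x|/(3\rho)-1/3>a|x|+b$. If $|x|\le 2\rho$, then $u(x)\ge m$ while $a|x|+b\le 2\rho/(3\rho)+m-1=m-1/3<m$; in either case $u(x)>a|x|+b$, which is the claim (and it holds trivially when $u(x)=+\infty$).

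I do not anticipate a real obstacle, since this is a standard property of coercive convex functions; the only delicate point is the geometric step. One must cross a sphere of radius \emph{strictly} larger than $\rho$ (I used $2\rho$) so that the auxiliary point $z$ genuinely lies outside $\{u\le m+1\}$, and one must orient the triangle-inequality bounds on $\lambda$ so as to produce a \emph{lower}, not merely an upper, bound on $u(x)$. It is also worth keeping in mind that coercivity is exactly what makes the statement true: convexity alone yields only affine lower bounds through subgradients, and these need not grow at infinity.
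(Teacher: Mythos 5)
Your proof is correct. The paper cites this lemma from Colesanti--Fragal\`a without reproducing the proof, so there is no argument in the paper to compare against; your argument is the standard one and works exactly as you describe: pass through a level set that traps the minimizer, use the intermediate value theorem to place an auxiliary point $z$ on a sphere strictly outside that sublevel set, and then exploit convexity along the segment $[x_0,x]$ together with the bound $\lambda\le 3\rho/(|x|-\rho)$ to get $u(x)>m+1/\lambda\ge m-1/3+|x|/(3\rho)$. The final choice $a=1/(3\rho)$, $b=m-1$ handles both regimes $|x|>2\rho$ and $|x|\le 2\rho$ cleanly, and the case $x\notin\dom u$ is trivial as you note. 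The only places one could stumble --- choosing the auxiliary radius strictly larger than $\rho$, and orienting the triangle-inequality bounds to get a lower bound on $1/\lambda$ --- are exactly the points you flagged and handled correctly.
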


\begin{lemma}[\!\!\cite{colesanti_ludwig_mussnig}, Lemma 8]
\label{le:uniform_cone}
Let $u_k,u\in\CV$. If $u_k \eto u$, then there exist constants $a,b\in\R$ with $a>0$ such that
\begin{equation*}
u_k(x)>a|x|+b\quad \text{and}\quad u(x)> a|x|+b,
\end{equation*}
for every $k\in\N$ and $x\in\Rn$.
\end{lemma}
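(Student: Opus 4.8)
\emph{Proof proposal.} The plan is to obtain the uniform cone bound by running the elementary convexity argument behind Lemma~\ref{le:cone}, while using epi-convergence to make all the constants involved uniform in $k$. Write $m_k=\min_{x\in\Rn}u_k(x)$ and $m=\min_{x\in\Rn}u(x)$, and fix a minimizer $x_k$ of $u_k$ and $x_0$ of $u$. The core estimate is: if $u_k(y)\ge m_k+1$ for every $y$ on the sphere $\partial B_R(x_k)$, then for any $x$ with $|x-x_k|\ge R$ the point $y=x_k+R\,(x-x_k)/|x-x_k|$ lies on that sphere and equals $(1-\lambda)x_k+\lambda x$ with $\lambda=R/|x-x_k|\in(0,1]$, so convexity gives $u_k(x)\ge m_k+\lambda^{-1}\big(u_k(y)-m_k\big)\ge m_k+|x-x_k|/R$; together with $u_k\ge m_k$ on $B_R(x_k)$ this yields a cone estimate whose slope $1/R$ and additive constant depend only on $R$, on $\sup_k|x_k|$, and on $\inf_k m_k$. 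Hence it is enough to produce one radius $R$ and one ball containing all the $x_k$ (and $x_0$), valid for the whole sequence.

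First I would bound the minima two-sidedly. By Lemma~\ref{le:lk_conv} (whose hypothesis $u_k\eto u_k$ is of course $u_k\eto u$), for a level $t_0>m$ we have $\{u_k\le t_0\}\to\{u\le t_0\}$ in the Hausdorff metric; since the limit is a nonempty convex body, $\{u_k\le t_0\}$ is nonempty and uniformly bounded for $k$ large, so $m_k\le t_0$ and $x_k\in\{u_k\le t_0\}$ lies in a fixed ball for all large $k$. For a lower bound, apply the same lemma at a level $s<m$: then $\{u\le s\}=\emptyset$, so by the stated convention $\{u_k\le s\}=\emptyset$ for $k$ large, i.e.\ $m_k>s$. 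Absorbing the finitely many remaining indices gives $-\infty<\inf_k m_k$, $\sup_k m_k=:M<\infty$, and $\sup_k|x_k|=:\rho<\infty$, where we also throw $x_0$ into the ball.

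Next I would fix the uniform radius. Set $c:=M+1$; since $c>m$, Lemma~\ref{le:lk_conv} gives $\{u_k\le c\}\to\{u\le c\}$, hence there is $r>0$ with $\{u_k\le c\}\subseteq B_r$ for every $k$ and $\{u\le c\}\subseteq B_r$. Put $R:=r+\rho+1$. Any $y$ with $|y-x_k|=R$ satisfies $|y|>r$, so $y\notin\{u_k\le c\}$ and $u_k(y)>c=M+1\ge m_k+1$; the identical statement holds for $u$ with $x_0$ replacing $x_k$. Feeding $R$, $\rho$ and $\inf_k m_k$ into the convexity estimate of the first paragraph then produces $a:=1/R>0$ and a constant $b$ depending only on those data with $u_k(x)>a|x|+b$ for all $k$ and $x$, and likewise $u(x)>a|x|+b$.

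The only real work is the middle step: extracting from epi-convergence the two-sided control of the minima, the uniform bound on the minimizers, and the uniform radius for a fixed sublevel set. Conditions (i) and (ii) together with Lemma~\ref{le:lk_conv} are exactly what is needed, and it is the convention ``$\{u_k\le t\}\to\emptyset$ means eventually empty'' that rules out $m_k\to-\infty$; once these uniform quantities are in hand, the remainder is the proof of Lemma~\ref{le:cone} carried out with constants that no longer depend on the function.
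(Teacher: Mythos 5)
The paper does not prove this lemma; it is imported from \cite{colesanti_ludwig_mussnig}, Lemma~8, so there is no in-source argument to compare against, and the task reduces to checking your proof on its own terms. It is correct, and the route is the natural one: rerun the elementary convexity argument behind Lemma~\ref{le:cone}, but extract all the data that argument uses (a lower bound on the minimum, a ball containing a minimizer, and a radius at which the function has climbed by at least $1$) uniformly in $k$ from Lemma~\ref{le:lk_conv}, exploiting the paper's convention that Hausdorff convergence of sublevel sets to $\emptyset$ means eventual emptiness. The core convexity estimate, the two-sided control of the $m_k$, the uniform bound $\rho$ on the minimizers, and the uniform radius $r$ are all handled correctly.

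The one assertion you leave unjustified is $c:=M+1>m$. It is true, and in fact it follows immediately from what you already established: since $m_k>s$ for all large $k$ and every $s<m$, we get $M=\sup_k m_k\ge m$, hence $c>m$. (Alternatively, set $c:=\max(M,m)+1$ and the issue never arises.) With that line added, the proof is complete; the remaining bookkeeping in passing from the pointwise bound $u_k(x)\ge m_k+|x-x_k|/R$ for $|x-x_k|\ge R$ (and $u_k\ge m_k$ elsewhere) to a strict affine lower bound $u_k(x)>|x|/R+b$ valid for all $x$ and $k$ is routine once $b$ is chosen strictly below $\inf_k m_k - 1-\rho/R$.
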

\noindent
Next, we introduce some special elements of $\CV$. For $K\in\Ko$, we define the convex function $\l_K:\Rn\to[0,+\infty]$ by
\begin{equation*}
\epi \l_K=\pos (K\times \{1\}).
\end{equation*}
This means that the epigraph of $\l_K$ is a cone with apex at the origin and $\{\l_K\leq t \}=t \, K$ for all $t \geq 0$. It is easy to see that $\l_K$ is an element of $\CV$ for  $K\in\Ko$. Also the (convex) indicator function $\Ind_K$ for $K\in\K^n$ belongs to $\CV$, where
\begin{equation*}
\Ind_K(x)=\begin{cases} 0,\quad &\text{if } x\in K\\
+\infty, &\text{if } x\notin K. \end{cases}
\end{equation*}
Observe, that $e^{-\Ind_K} = \Char_K$ for every $K\in\Kn$.\par
Let $f,g\in\LC$ be such that $f\mx g\in\LC$ and let $\oZ:\LC\to\langle A,+\rangle$, where $\langle A,+\rangle$ is an abelian semigroup. By definition, there exist functions $u,v\in\CV$ such that $u\mn v\in\CV$ and $f=e^{-u}$ and $g=e^{-v}$. Since
\begin{equation*}
f\mx g = e^{-(u\mn g)}\qquad \text{and}\qquad f\mn g = e^{-(u\mx g)},
\end{equation*}
the map $\oZ$ is a valuation if and only if $\oY:\CV\to\langle A,+\rangle$ is a valuation, where
\begin{equation*}
\oY(u)=\oZ(e^{-u}),
\end{equation*}
for every $u\in\CV$. Hence, studying valuations on $\LC$ is equivalent to studying valuations on $\CV$ and it will be convenient for us to switch between these points of view. By the definition of hypo-convergence on $\LC$, the valuation $\oZ$ is continuous if and only if $\oY$ is continuous. Furthermore, for $x\in\Rn$ we have $f\circ\tau_x^{-1}=e^{-u\circ\tau_x^{-1}}$. Hence, $\oZ$ is translation invariant if and only if $\oY$ is translation invariant. Similarly, translation covariance, $\SLn$ invariance and $\SLn$ covariance are equivalent for valuations on $\LC$ and their counterparts on $\CV$.\par
The next result, which is based on \cite{ludwig_sobolev}, shows that a valuation on $\CV$ is uniquely determined by its behaviour on certain functions.

\begin{lemma}[\!\!\cite{colesanti_ludwig_mussnig}, Lemma 17]
\label{le:reduction}
Let $\langle A,+\rangle$ be a topological abelian semigroup with cancellation law and let $\oY_1, \oY_2:\CV\to\langle A,+\rangle$ be continuous valuations. If $\oY_1(\l\circ\tau_x^{-1})=\oY_2(\l\circ\tau_x^{-1})$ for every $\l\in\{\l_P+t:\, P\in\Po,\, t\in\R\}$ and every $x\in\Rn$, then $\oY_1 \equiv \oY_2$ on $\CV$.
\end{lemma}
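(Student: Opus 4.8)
The plan is to show that $\oY_1$ and $\oY_2$ agree on larger and larger subclasses of $\CV$, building up by valuation induction on the number of "pieces" of the domain and using continuity to pass to the limit. The strategy mirrors the argument in \cite{ludwig_sobolev} and \cite{colesanti_ludwig_mussnig}. First, I would reduce to proving the claim for the \emph{difference} $\oY_1-\oY_2$; since $\langle A,+\rangle$ has the cancellation law, it embeds into its Grothendieck group, and continuity and the valuation property are preserved, so it suffices to show that a continuous valuation $\oY$ vanishing on all translates of functions $\l_P+t$, $P\in\Po$, $t\in\R$, vanishes identically. Call a function $u\in\CV$ \emph{piecewise of the special form} on a polytope if its epigraph, over a polytopal domain, is built from finitely many cones of the type $\pos(P\times\{1\})$ translated vertically and horizontally.

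The key steps, in order, are: (1) Show $\oY$ vanishes on every function of the form $\l_P\circ\tau_x^{-1}+t$ with $P\in\Po$, then on $\l_P\circ\tau_x^{-1}+t$ with $P\in\Pn$ arbitrary (not necessarily containing the origin) by translating so that the apex moves — here one uses that $\l_K$ for $K\in\Ko$ has $\{\l_K\le t\}=tK$, and a general cone function is a translate of such a function. (2) Prove by induction on the number of facets / on a triangulation that $\oY$ vanishes on every convex function whose epigraph over its (polytopal) domain is a finite union of vertically-translated cones of the above type — i.e. every "piecewise conical" convex function with polytopal domain. The inductive step writes such a $u$ as $u_1\wedge u_2$ (or $u_1\vee u_2$) where $u_1,u_2$ have strictly fewer pieces and $u_1\wedge u_2$, $u_1\vee u_2\in\CV$, and applies \eqref{eq:val}; one must check that the meet/join stays in $\CV$ and that the combinatorics of the decomposition genuinely decreases the complexity parameter. (3) Approximate an arbitrary $u\in\CV$ in the epi-topology by such piecewise conical functions with polytopal domains: given $u$, use Lemma~\ref{le:cone} to trap $u$ above a cone, truncate $u$ at height $k$ to get bounded convex functions with compact domain, approximate the (bounded) convex function by piecewise linear ones on a fine triangulation of a large ball, and check epi-convergence via Lemma~\ref{le:epi_lvl_sets} (producing, for each level $t$, sublevel sets converging in the Hausdorff metric). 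Then continuity of $\oY$ gives $\oY(u)=0$.

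The main obstacle I expect is step (2) together with the approximation bookkeeping in step (3): one must set up a complexity measure on piecewise conical convex functions on which valuation induction actually closes — i.e. exhibit, for a function with $m\ge 2$ pieces, a splitting $u=u_1\wedge u_2$ with $u_1,u_2,u_1\vee u_2\in\CV$ all of strictly smaller complexity — and simultaneously ensure the approximating family in step (3) lies in exactly this inductively-controlled class while epi-converging to the target. Handling the domains carefully matters here: $\l_P$ requires $P\in\Ko$, arbitrary convex functions have domains that are merely convex bodies (or all of $\Rn$), and coercivity must be maintained throughout the truncation/triangulation so that every intermediate function genuinely lies in $\CV$. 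Once the class of piecewise conical functions is shown to be epi-dense in $\CV$ and $\oY$ is shown to vanish on it by induction, the cancellation law and continuity finish the proof.
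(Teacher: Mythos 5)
Your plan correctly identifies the overall strategy of the cited proof (reduce via the cancellation law to showing a single valuation $\oY$ vanishes; handle cone-type functions first; extend to piecewise affine convex functions by valuation induction; finish by epi-density and continuity), and you are right that the paper under review contributes only the observation that translation invariance can be dropped by working with the translates $\l_P\circ\tau_x^{-1}+t$ directly. But as you yourself flag, the core of the argument --- step (2) --- is left as a description of what would need to be done rather than an actual decomposition, and that is exactly where the work lies. In particular, your phrasing ``writes such a $u$ as $u_1\wedge u_2$ \dots\ where $u_1,u_2$ have strictly fewer pieces'' glosses over a real obstruction: the pointwise minimum of two functions in $\CV$ is almost never convex, so one cannot split the epigraph into its constituent cones and take $\wedge$. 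The splitting that actually works is by restricting the domain: one cuts $\dom u$ by a hyperplane into $Q_1,Q_2$ and sets $u_i=u+\Ind_{Q_i}$, so that $u_1\wedge u_2=u$ and $u_1\vee u_2=u+\Ind_{Q_1\cap Q_2}$ are both in $\CV$. This introduces indicator functions $\Ind_Q+t$, which are \emph{not} among the hypothesis functions $\l_P\circ\tau_x^{-1}+t$; one needs the additional observation that $k\,\l_Q\eto\Ind_Q$ as $k\to\infty$, so continuity transfers agreement from cone functions to indicator functions before the induction can even start. Your sketch omits this intermediate step entirely, and without it the induction does not close. So the skeleton matches the cited argument, but the decisive mechanism --- domain-splitting with indicators, fed by the epi-limit $k\l_Q\to\Ind_Q$ --- is missing, and your proposed ``epigraph-cone'' decomposition would not, as written, stay inside $\CV$.
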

\noindent
We remark, that in \cite[Lemma 17]{colesanti_ludwig_mussnig} it is assumed that the valuations are translation invariant. However, translation invariance itself is not needed for the proof and it is easy to see that this more general statement holds.

\section{$\SLn$ Invariant Real-Valued Valuations on $\LC$}
\label{se:sln_inv_vals}
We denote by $V_0$ the Euler characteristic, that is $V_0(K)=1$ for every $K\in\Kn$ and $V_0(\emptyset)=0$. Since for $f\in\LC$ the level sets $\{f\geq t\}$ are convex bodies for every $0<t\leq \max_{x\in\Rn} f$, it makes sense to consider
\begin{equation*}
V_0(f):=\int_{0}^{+\infty} V_0(\{f\geq t\}) \d t = \max\nolimits_{x\in\Rn} f(x).
\end{equation*}
Furthermore, denoting by $V_n$ the $n$-dimensional volume or Lebesgue measure, and assuming that the integrals converge, we have by the layer-cake principle
\begin{equation*}
V_n(f):=\int_{0}^{+\infty} V_n(\{f\geq t\}) \d t = \int_{\Rn} f(x) \d x.
\end{equation*}
We remark that this notion for the volume of a (log-concave) function is frequently used and there are several examples of functional counterparts of geometric inequalities, in which the volume $V_n(K)$ of a convex body $K$ is replaced by the integral $\int f$ of a function $f$. For example, the Pr\'ekopa-Leindler inequality is the functional analog of the Brunn-Minkowski inequality \cite{prekopa, leindler}. Furthermore, functional versions of quermassintegrals where recently introduced for quasi-concave functions \cite{bobkov_colesanti_fragala,milman_rotem}.\par
We need the following result where the volume operator of a specific function is calculated. Let $n\geq 2$.

\begin{lemma}
\label{le:vn_l_t_lambda}
For $\lambda\geq 0$, $q>0$ and $T_\lambda=\conv\{0,\lambda \,e_1,e_2, \ldots, e_n\}$, 
\begin{equation*}
V_n(e^{-q \l_{T_\lambda}})= \tfrac{\lambda}{q^n}.
\end{equation*}
\end{lemma}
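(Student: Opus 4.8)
The plan is to compute $V_n(e^{-q\l_{T_\lambda}}) = \int_{\Rn} e^{-q\l_{T_\lambda}(x)}\d x$ directly via the layer-cake formula, using the explicit description of the superlevel sets of $e^{-q\l_{T_\lambda}}$. Recall from the preliminaries that $\{\l_K \le t\} = tK$ for $t\ge 0$ when $K\in\Ko$; since $T_\lambda\in\Ko$ (it is a simplex with a vertex at the origin), this applies with $K = T_\lambda$. Consequently $\{e^{-q\l_{T_\lambda}} \ge s\} = \{\l_{T_\lambda} \le -\tfrac1q\log s\} = (-\tfrac1q\log s)\,T_\lambda$ for $0 < s \le 1$, and is empty for $s > 1$.

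Next I would apply the layer-cake principle:
\begin{equation*}
V_n\big(e^{-q\l_{T_\lambda}}\big) = \int_0^{+\infty} V_n\big(\{e^{-q\l_{T_\lambda}} \ge s\}\big)\d s = \int_0^{1} V_n\Big(\big(-\tfrac1q\log s\big)T_\lambda\Big)\d s.
\end{equation*}
Using $n$-homogeneity of Lebesgue measure, $V_n((-\tfrac1q\log s)T_\lambda) = (-\tfrac1q\log s)^n\,V_n(T_\lambda)$, and the substitution $s = e^{-qr}$ (so $\d s = -q e^{-qr}\d r$, with $s=1 \mapsto r=0$ and $s\to 0^+ \mapsto r\to+\infty$) turns the integral into
\begin{equation*}
V_n(T_\lambda)\int_0^{+\infty} r^n\, q\, e^{-qr}\d r = V_n(T_\lambda)\cdot \frac{n!}{q^n},
\end{equation*}
where the last equality is the standard Gamma integral $\int_0^\infty r^n e^{-qr}\d r = n!/q^{n+1}$.

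Finally I would compute $V_n(T_\lambda)$, the volume of the simplex $\conv\{0, \lambda e_1, e_2, \dots, e_n\}$. This is $\tfrac{1}{n!}$ times the absolute value of the determinant of the matrix with columns $\lambda e_1, e_2, \dots, e_n$, which is $\tfrac{\lambda}{n!}$. Substituting gives $V_n(e^{-q\l_{T_\lambda}}) = \tfrac{\lambda}{n!}\cdot\tfrac{n!}{q^n} = \tfrac{\lambda}{q^n}$, as claimed; the case $\lambda = 0$ is degenerate (the function is supported on a lower-dimensional set) and the formula reads $0 = 0$. I do not anticipate a genuine obstacle here — this is a routine computation — the only points requiring a little care are the correct identification of the superlevel sets via $\{\l_K\le t\} = tK$ and keeping track of the constant in the Gamma integral after the exponential substitution.
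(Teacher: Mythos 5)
Your proof is correct and follows essentially the same route as the paper: apply the layer-cake principle, identify the superlevel sets as $(-\tfrac1q\log s)\,T_\lambda$ via $\{\l_K\le t\}=tK$, substitute to reduce to the Gamma integral, and plug in $V_n(T_\lambda)=\lambda/n!$. No issues.
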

\begin{proof}
By definition we have
\begin{equation*}
V_n(e^{-q \l_{T_\lambda}}) = \int_0^1 V_n(\{e^{-q \l_{T_\lambda}} \leq t\}) \d t = \int_0^1 V_n(\{\l_{T_\lambda} \leq -\tfrac{\log t}{q}\}) \d t.
\end{equation*}
Using the substitution $s=-\tfrac{\log t}{q}$ we have $\d t = -q e^{-qs} \d s$ and therefore
\begin{equation*}
V_n(e^{-q \l_{T_\lambda}}) = q \int_0^{+\infty} V_n(\{\l_{T_\lambda}\leq s\})\, e^{-q s} \d s.
\end{equation*}
By definition, $\{\l_{T_\lambda}\leq s\}= s\,T_\lambda$ for every $s\geq 0$. Hence,
\begin{equation*}
V_n(\{\l_{T_\lambda}\leq s\}) = s^n V_n(T_\lambda) = s^n \tfrac{\lambda}{n!}.
\end{equation*}
This gives
\begin{equation*}
V_n(e^{-q \l_{T_\lambda}}) = \tfrac{\lambda}{n!} \int_0^{+\infty} s^n e^{-qs} q \d s = \tfrac{1}{q^n}\tfrac{\lambda}{n!} \int_0^{+\infty} (qs)^n e^{-qs} q \d s = \tfrac{1}{q^n}\tfrac{\lambda}{n!} \int_0^{+\infty} r^n e^{-r} \d r = \tfrac{\lambda}{q^n}.
\end{equation*}
\end{proof}
\noindent
The following results are mostly deduced from \cite{colesanti_ludwig_mussnig}, where analogs of $V_0$ and $V_n$ on $\CV$ were studied. 

\begin{lemma}
\label{le:v0_on_lc_is_a_val}
For every $q\in\R$, the map
\begin{equation*}
f\mapsto V_0(f)^q 
\end{equation*}
is a continuous, equi-affinely invariant valuation on $\LC$ that is homogeneous of degree $q$.
\end{lemma}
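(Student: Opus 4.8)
The plan is to verify the defining properties one at a time: that $f\mapsto V_0(f)^q$ is well defined, a valuation, translation and $\SLn$ invariant, homogeneous of degree $q$, and continuous. Since every $f\in\LC$ is non-negative, upper semicontinuous, vanishes at infinity and is not identically $0$, it attains a strictly positive maximum $V_0(f)=\max_{x\in\Rn}f(x)$, so $V_0(f)^q\in(0,+\infty)$ is well defined for every $q\in\R$ and no case distinctions in $q$ will be needed.

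For the valuation property I would take $f,g\in\LC$ with $f\mx g,f\mn g\in\LC$. From $(f\mx g)(x)=\max\{f(x),g(x)\}$ one reads off $V_0(f\mx g)=\max\{V_0(f),V_0(g)\}$ at once, and the real point is the identity $V_0(f\mn g)=\min\{V_0(f),V_0(g)\}$; granted this, the valuation equation follows because $\{\max\{a,b\},\min\{a,b\}\}$ equals $\{a,b\}$ as a multiset, so $V_0(f\mx g)^q+V_0(f\mn g)^q=V_0(f)^q+V_0(g)^q$ irrespective of the sign of $q$. To prove the identity, assume without loss of generality $M:=V_0(f)\le V_0(g)$; then $V_0(f\mn g)\le M$ is immediate from $f\mn g\le f$. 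For the reverse inequality, note that $\{f\ge M\}$ and $\{g\ge M\}$ are non-empty (as $0<M\le V_0(g)$), convex (log-concavity) and compact (upper semicontinuity together with vanishing at infinity), while $\{f\mx g\ge M\}=\{f\ge M\}\cup\{g\ge M\}$ is a convex body since $f\mx g\in\LC$ and $M\in(0,V_0(f\mx g)]$. As a convex set is connected, it cannot be a disjoint union of two non-empty closed subsets of itself, so $\{f\ge M\}\cap\{g\ge M\}\neq\emptyset$, and at a point $x_0$ of this intersection $(f\mn g)(x_0)=\min\{f(x_0),g(x_0)\}\ge M$; hence $V_0(f\mn g)\ge M$. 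I expect this connectedness step to be the one place where something genuinely has to be said; the remainder is bookkeeping.

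Translation and $\SLn$ invariance are immediate from $\max_{x\in\Rn}f(\psi^{-1}x)=\max_{x\in\Rn}f(x)$ for any bijection $\psi$ of $\Rn$, and $V_0(sf)=sV_0(f)$ for $s>0$ gives $V_0(sf)^q=s^qV_0(f)^q$, i.e.\ homogeneity of degree $q$. For continuity I would move to $\CV$: if $f_k\hto f$, write $f_k=e^{-u_k}$, $f=e^{-u}$ with $u_k\eto u$; since $V_0(e^{-w})=e^{-\min w}$ it suffices to show $\min u_k\to\min u$. With $m:=\min u$ and $m_k:=\min u_k$, for each $t>m$ the set $\{u\le t\}$ is a non-empty convex body, so Lemma~\ref{le:lk_conv} gives $\{u_k\le t\}\to\{u\le t\}\neq\emptyset$ and thus $m_k\le t$ for large $k$, whence $\limsup_k m_k\le m$; for each $t<m$ we have $\{u\le t\}=\emptyset$, so Lemma~\ref{le:lk_conv} (with the convention preceding it) gives $\{u_k\le t\}=\emptyset$ for large $k$, whence $m_k>t$ eventually and $\liminf_k m_k\ge m$. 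Therefore $m_k\to m$, and composing with the continuous maps $t\mapsto e^{-t}$ and $t\mapsto t^q$ on $(0,+\infty)$ yields $V_0(f_k)^q\to V_0(f)^q$. This last part is also essentially contained in \cite{colesanti_ludwig_mussnig}, where the analog of $V_0$ on $\CV$ is studied, so one could alternatively quote it from there.
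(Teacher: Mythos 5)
Your proof is correct. The paper's own argument is much shorter: it verifies homogeneity directly (as you do) and then simply cites \cite[Lemma 12]{colesanti_ludwig_mussnig} for the valuation property, $\SLn$ and translation invariance, and continuity. You instead give a self-contained argument for every property. The genuinely non-trivial ingredient, which you correctly isolate, is the identity $V_0(f\mn g)=\min\{V_0(f),V_0(g)\}$; your connectedness argument (with $M=\min\{V_0(f),V_0(g)\}$, the set $\{f\mx g\ge M\}=\{f\ge M\}\cup\{g\ge M\}$ is a convex body and hence connected, so the two non-empty closed pieces must meet) is precisely the geometric fact that makes $V_0$ a valuation, and is essentially the content of the cited lemma. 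Your continuity argument — reducing to $\min u_k\to\min u$ and deducing this from Lemma~\ref{le:lk_conv} together with the stated convention for convergence to $\emptyset$ — is also sound. The trade-off is purely expository: the paper delegates the substance to a reference, while your version makes the whole argument visible in one place; in particular you record explicitly why $V_0(f)\in(0,+\infty)$, so that $V_0(f)^q$ is well defined for every $q\in\R$, a point the paper leaves implicit.
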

\begin{proof}
\begin{sloppypar}
Since
\begin{equation*}
\left(\max_{x\in\Rn} s\, f(x)\right)^q = s^q \left(\max_{x\in\Rn} f(x)\right)^q,
\end{equation*}
for every $f\in\LC$ and $s>0$, the map $f\mapsto V_0(f)^q $ is homogeneous of degree $q$. By \break \cite[Lemma 12]{colesanti_ludwig_mussnig} it is a continuous, $\SLn$ and translation invariant valuation.
\end{sloppypar}
\end{proof}

\begin{lemma}
\label{le:vn_on_lc_is_a_val}
For every $q>0$, the map
\begin{equation*}
\label{eq:vn_f_q}
f\mapsto V_n(f^q)
\end{equation*}
is a continuous, equi-affinely invariant valuation on $\LC$ that is homogeneous of degree $q$.
\end{lemma}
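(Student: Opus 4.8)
The plan is to reduce everything to properties of the plain volume functional $V_n\colon\LC\to\R$, $g\mapsto\int_{\Rn}g(x)\d x$, by factoring the map $f\mapsto V_n(f^q)$ through $\Phi_q\colon\LC\to\LC$, $\Phi_q(f)=f^q$. First I would check the elementary properties of $\Phi_q$: if $f=e^{-u}$ with $u\in\CV$ then $f^q=e^{-qu}$ and $qu\in\CV$ because $q>0$, so $\Phi_q$ indeed maps $\LC$ into $\LC$; since $t\mapsto t^q$ is increasing on $[0,+\infty)$ it preserves the lattice operations, $(f\mx g)^q=f^q\mx g^q$ and $(f\mn g)^q=f^q\mn g^q$; it intertwines the actions of translations and of $\SLn$, and $(sf)^q=s^qf^q$ for $s>0$; and it is continuous for hypo-convergence, since $u_k\eto u$ trivially gives $qu_k\eto qu$ (multiply conditions (i) and (ii) in the definition of epi-convergence by $q>0$). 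Granting that $V_n$ is a continuous, $\SLn$ and translation invariant valuation on $\LC$, the composite $f\mapsto V_n(f^q)=V_n(\Phi_q(f))$ then inherits continuity, the two invariances, and the valuation property, and $V_n((sf)^q)=V_n(s^qf^q)=s^qV_n(f^q)$ gives homogeneity of degree $q$.

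It then suffices to prove that $V_n$ is a continuous, $\SLn$ and translation invariant valuation on $\LC$, and everything here except continuity is routine. For $f=e^{-u}\in\LC$ the cone property (Lemma~\ref{le:cone}) yields $a>0$ and $b\in\R$ with $f(x)<e^{-a|x|-b}$, so $V_n(f)<+\infty$ and $V_n$ is a genuine real-valued map. Integrating the pointwise identity $\max(f,g)+\min(f,g)=f+g$ over $\Rn$, with all four integrals finite, gives the valuation identity whenever $f,g,f\mx g,f\mn g\in\LC$. Translation invariance of Lebesgue measure gives $V_n(f\circ\tau_x^{-1})=V_n(f)$, and $|\det\phi|=1$ for $\phi\in\SLn$ gives $V_n(f\circ\phi^{-1})=V_n(f)$ by the change-of-variables formula.

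The only point requiring real care is continuity, and this is where I expect the main obstacle to lie: epi-convergence does not force pointwise convergence everywhere, so a naive application of dominated convergence on $\Rn$ needs an extra argument, and I would instead go through a layer-cake representation. Writing $V_n(e^{-u})=\int_{\R}V_n(\{u\le s\})\,e^{-s}\d s$ (obtained by the substitution $s=-\log t$ as in the proof of Lemma~\ref{le:vn_l_t_lambda}, the integrand vanishing for $s<\min_{x}u(x)$), and likewise for the $u_k$, I would use that $u_k\eto u$ implies $\{u_k\le s\}\to\{u\le s\}$ in the Hausdorff metric for every $s\neq\min_x u(x)$ by Lemma~\ref{le:lk_conv}, hence $V_n(\{u_k\le s\})\to V_n(\{u\le s\})$ for almost every $s$ by continuity of volume on $\Kn$ (with the convention that $\{u_k\le s\}\to\emptyset$ forces the volumes to $0$). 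The uniform cone property (Lemma~\ref{le:uniform_cone}) provides $a>0$ and $b\in\R$ with $u_k(x)>a|x|+b$ for all $k$, so $\{u_k\le s\}$ lies in a ball of radius $(s-b)/a$ for $s\ge b$ and is empty otherwise, giving the fixed integrable majorant $s\mapsto\kappa_n\big((s-b)/a\big)^n e^{-s}$ on $[b,+\infty)$ for $s\mapsto V_n(\{u_k\le s\})e^{-s}$. Dominated convergence then yields $V_n(e^{-u_k})\to V_n(e^{-u})$, completing the argument. Alternatively, one may simply invoke the analysis of the functional $u\mapsto\int_{\Rn}e^{-u}\d x$ on $\CV$ carried out in~\cite{colesanti_ludwig_mussnig}, exactly as was done for $V_0$ in Lemma~\ref{le:v0_on_lc_is_a_val}.
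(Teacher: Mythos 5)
Your proof is correct, and it takes a more self-contained route than the paper does. The paper's own proof is essentially a citation: it invokes Lemmas 15 and 16 of \cite{colesanti_ludwig_mussnig}, which already establish that $f\mapsto V_n(f^q)$ is a well-defined, continuous, $\SLn$ and translation invariant valuation on $\LC$, and then only verifies the additional property of homogeneity of degree $q$ by the one-line computation $\int_{\Rn}(sf)^q=s^q\int_{\Rn}f^q$. You instead reprove the cited facts from scratch, and you organize them cleanly by factoring $f\mapsto V_n(f^q)$ as $V_n\circ\Phi_q$ with $\Phi_q(f)=f^q$, checking separately that $\Phi_q$ is a lattice homomorphism of $\LC$ intertwining with translations, $\SLn$, and hypo-convergence, and that the plain functional $V_n$ is a continuous, translation and $\SLn$ invariant valuation. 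The factorization idea is a nice structural observation not made explicit in the paper. Your continuity argument (the layer-cake formula $V_n(e^{-u})=\int_{\R}V_n(\{u\le s\})e^{-s}\d s$, a.e.\ convergence of the integrands via Lemma~\ref{le:lk_conv} and continuity of volume on $\Kn$, and a uniform integrable majorant from Lemma~\ref{le:uniform_cone}) is exactly the mechanism underlying the cited lemmas, so the underlying mathematics is the same; what you gain is that this lemma no longer depends on the external reference, at the cost of roughly a page of work the paper delegates.
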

\begin{proof}
By \cite[Lemmas 15 \& 16]{colesanti_ludwig_mussnig}, the map $f\mapsto V_n(f^q)$ is a well-defined continuous, $\SLn$ and translation invariant valuation on $\LC$. Since
\begin{equation*}
\int_{\Rn} (sf)^q(x) \d x = s^q \int_{\Rn} f^q (x) \d x,
\end{equation*}
for every $f\in\LC$ and $s>0$, it is homogeneous of degree $q$.
\end{proof}

\section{Classification of $\SLn$ Invariant Real-Valued Valuations}

In the following Lemma we collect some results that were proved in \cite{colesanti_ludwig_mussnig}. Let $n\geq 2$.
\begin{lemma}
\label{le:results_on_sln_inv_val}
If $\oY:\CV\to\R$ is a continuous, $\SLn$ and translation invariant valuation, then there exist continuous functions $\zeta_0,\zeta_n,\psi_n:\R\to\R$ such that
\begin{eqnarray*}
\oY(\l_K+t)&=&\zeta_0(t)+\psi_n(t) V_n(K),\\
\oY(\Ind_K+t)&=&\zeta_0(t)+\zeta_n(t) V_n(K),
\end{eqnarray*}
for every $K\in\Ko$ and $t\in\R$. Furthermore, $\lim_{t\to+\infty} \psi_n(t)=0$ and \begin{equation*}
\zeta_n(t)=\frac{(-1)^n}{n!}\frac{\d^n}{\d t^n}\psi_n(t),
\end{equation*}
for every $t\in\R$. Moreover, $\oY$ is uniquely determined by $\zeta_0$ and $\zeta_n$.
\end{lemma}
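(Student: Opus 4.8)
The plan is to extract the stated formulas from the analysis in \cite{colesanti_ludwig_mussnig}, where $\SLn$ and translation invariant real-valued valuations on $\CV$ were studied, and then to track the regularity and the differential relation between $\psi_n$ and $\zeta_n$. First I would restrict attention to the functions $\l_K+t$ and $\Ind_K+t$ with $K\in\Ko$ and $t\in\R$. Using $\SLn$ invariance, the values $\oY(\l_K+t)$ and $\oY(\Ind_K+t)$ depend on $K$ only through $\SLn$-invariant data; since $\l_{\phi K}=\l_K\circ\phi^{-1}$ and $\Ind_{\phi K}=\Ind_K\circ\phi^{-1}$ for $\phi\in\SLn$, and since $\oY$ is a valuation in $K$ (via the sublevel-set identities recalled in Section~\ref{subse:conv_and_log_fct}), the maps $K\mapsto\oY(\l_K+t)$ and $K\mapsto\oY(\Ind_K+t)$ are, for each fixed $t$, continuous $\SLn$ invariant valuations on $\Ko$. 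By the classification of such real-valued valuations on convex bodies (the Hadwiger-type / Haberl--Parapatits result invoked in \cite{colesanti_ludwig_mussnig}), each is an affine combination of $V_0\equiv 1$ and $V_n$, which gives the existence of functions $\zeta_0,\tilde\zeta_0,\psi_n,\zeta_n$ with
\begin{align*}
\oY(\l_K+t)&=\zeta_0(t)+\psi_n(t)V_n(K),\\
\oY(\Ind_K+t)&=\tilde\zeta_0(t)+\zeta_n(t)V_n(K).
\end{align*}
Evaluating both lines at $K=\{0\}$ (where $V_n=0$ and $\l_{\{0\}}=\Ind_{\{0\}}$ is the indicator of the origin) forces $\tilde\zeta_0=\zeta_0$, and continuity of $\oY$ together with continuity of $K\mapsto V_n(K)$ and an approximation argument in $t$ yields continuity of $\zeta_0,\psi_n,\zeta_n$.

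Next I would establish the limit $\lim_{t\to+\infty}\psi_n(t)=0$ and the differential identity. For the limit, note $\l_K+t\to+\infty$ pointwise as $t\to+\infty$, i.e.\ $e^{-(\l_K+t)}\to 0$, which epi-converges to the constant function $+\infty$; but the constant $+\infty$ is \emph{not} in $\CV$, so instead I would compare $\l_K+t$ with $\Ind_{\{0\}}+t$ (or use that hypo-convergence forces $\oY(\l_K+t)-\oY(\Ind_{\{0\}}+t)\to 0$, equivalently $\psi_n(t)V_n(K)\to 0$) and conclude $\psi_n(t)\to 0$ by choosing $K$ with $V_n(K)\neq 0$. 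For the relation between $\zeta_n$ and $\psi_n$, the key is the identity, already present in \cite{colesanti_ludwig_mussnig}, relating the cone function $\l_K$ to the indicator $\Ind_K$ through iterated sublevel-set integration: writing $\{\l_K+t\le s\}=\{\l_K\le s-t\}=(s-t)K$ for $s\ge t$ and using the valuation/continuity structure, one obtains $\oY(\l_K+t)$ as an $n$-fold integral of $\oY(\Ind_K+\cdot)$-type quantities against the cone, so that $\psi_n$ is (up to the sign $(-1)^n$ and the factor $1/n!$) an $n$-fold antiderivative of $\zeta_n$; differentiating $n$ times gives $\zeta_n(t)=\frac{(-1)^n}{n!}\frac{\d^n}{\d t^n}\psi_n(t)$, which in particular shows $\psi_n\in C^n(\R)$.

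Finally, the uniqueness assertion — that $\oY$ is determined by $\zeta_0$ and $\zeta_n$ — follows from Lemma~\ref{le:reduction}: the functions $\l_P+t$ with $P\in\Po$, $t\in\R$ form a determining class for continuous valuations on $\CV$, and on this class $\oY(\l_P+t)=\zeta_0(t)+\psi_n(t)V_n(P)$ is fixed once $\zeta_0$ and $\psi_n$ are known; since $\psi_n$ is recovered from $\zeta_n$ by $n$-fold integration together with the normalization $\lim_{t\to+\infty}\psi_n(t)=0$ (and its derivatives vanishing at $+\infty$, which pins down the integration constants), the pair $(\zeta_0,\zeta_n)$ determines $\psi_n$, hence $\oY$ on the determining class, hence $\oY$ everywhere. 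I expect the main obstacle to be the rigorous derivation of the $n$-fold integral relation between $\l_K$ and $\Ind_K$ — i.e.\ justifying the interchange of $\oY$ with the limiting/integration procedure that produces the differential identity and the $C^n$ regularity of $\psi_n$ — since this is where the continuity of $\oY$, the cone property (Lemma~\ref{le:cone}), and the precise results of \cite{colesanti_ludwig_mussnig} must be combined carefully.
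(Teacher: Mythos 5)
The paper does not prove this lemma: it is explicitly stated as a summary of results established in \cite{colesanti_ludwig_mussnig} (``In the following Lemma we collect some results that were proved in \cite{colesanti_ludwig_mussnig}''), so there is no internal proof to compare against. Your sketch is a plausible reconstruction of what the cited argument does, and the core reduction --- fixing $t$, observing that $K\mapsto\oY(\l_K+t)$ and $K\mapsto\oY(\Ind_K+t)$ are continuous $\SLn$ invariant valuations on $\Ko$ and $\Kn$ respectively, invoking the classification of such real-valued valuations, and setting $K=\{0\}$ to identify the constant terms --- is exactly the right idea; the uniqueness step via Lemma~\ref{le:reduction} is also correct, and your parenthetical about derivatives of $\psi_n$ vanishing at $+\infty$ is superfluous, since $\zeta_n\equiv 0$ already forces $\psi_n$ to be a polynomial of degree $<n$, and the single normalization $\psi_n(t)\to 0$ kills any such polynomial.

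Two of your steps are genuinely incomplete, however, and the first is circular as written. For $\lim_{t\to+\infty}\psi_n(t)=0$, you propose to compare $\l_K+t$ with $\Ind_{\{0\}}+t$ and ``use that hypo-convergence forces $\oY(\l_K+t)-\oY(\Ind_{\{0\}}+t)\to 0$''; but neither sequence converges in $\CV$ as $t\to+\infty$ (the pointwise limit $+\infty$ is not proper), so continuity of $\oY$ gives you nothing here, and the statement $\oY(\l_K+t)-\oY(\Ind_{\{0\}}+t)\to 0$ is literally equivalent to $\psi_n(t)V_n(K)\to 0$ --- i.e.\ it is what you are trying to prove. The actual argument in \cite{colesanti_ludwig_mussnig} is more delicate and does not reduce to a bare continuity statement. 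Second, the differential identity $\zeta_n=\frac{(-1)^n}{n!}\psi_n^{(n)}$ and the attendant $C^n$ regularity of $\psi_n$ are, as you yourself flag, the technical heart of the lemma; the ``$n$-fold integral relation'' between cone functions and indicator functions needs a concrete truncation/approximation scheme (roughly, interpolating between $\l_K$ and $\Ind_K$ through functions whose epigraphs are suitably truncated cones, and exploiting the valuation property repeatedly), and your proposal names the obstacle without resolving it. So the plan is sound in outline but would require substantial additional work at precisely those two points to become a proof.
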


\subsection{Proof of Theorem~\ref{thm:class_inv}}
By Lemmas \ref{le:v0_on_lc_is_a_val} and \ref{le:vn_on_lc_is_a_val}, the operator
\begin{equation*}
f\mapsto c_0 V_0(f)^q + c_n V_n(f^q),
\end{equation*}
defines a continuous, equi-affinely invariant valuation on $\LC$ for every $c_0,c_n\in\R$ and $q\in\R$, when $q>0$ if $c_n\neq 0$.\par
Conversely, let $\oZ:\LC\to\R$ be a continuous, equi-affinely invariant valuation and let $\oY$ be the corresponding valuation on $\CV$, that is $\oY(u)=\oZ(e^{-u})$ for every $u\in\CV$. Then $\oY$ is continuous, $\SLn$ and translation invariant. Furthermore,
\begin{equation*}
\oY(u+t)=\oZ(e^{-u-t}) = (e^{-t})^q \oZ(e^{-u}) = e^{-qt} \oY(u),
\end{equation*}
for every $u\in\CV$ and $t\in\R$, where $q\in\R$ denotes the degree of homogeneity of $\oZ$. Let $\zeta_0,\zeta_n,\psi_n$ be the functions from Lemma \ref{le:results_on_sln_inv_val}. We have,
\begin{equation*}
\zeta_0(t)=\oY(\Ind_{\{0\}}+t)=e^{-qt} \oY(\Ind_{\{0\}}),
\end{equation*}
for every $t\in\R$. Hence, there exists a constant $c_0\in\R$ such that $\zeta_0(t)=c_0 e^{-qt}$ for every $t\in\R$. Furthermore, let $K\in\Ko$ such that $V_n(K)>0$. Then,
\begin{equation*}
e^{-qt}\oY(\l_K)=\oY(\l_K+t)=\zeta_0(t)+\psi_n(t)V_n(K)=c_0 e^{-qt} + \psi_n(t)V_n(K),
\end{equation*}
for every $t\in\R$. Hence, there exists a constant $\widetilde{c}_n\in\R$ such that $\psi_n(t)=\widetilde{c}_n\,e^{-qt}$ for every $t\in\R$. Since $\lim_{t\to+\infty} \psi_n(t)=0$, we must have $q>0$ or $\widetilde{c}_n=0$. Moreover,
\begin{equation*}
\zeta_n(t)=\frac{(-1)^n}{n!}\frac{\d^n}{\d t^n} \psi_n(t) = \frac{\widetilde{c}_n\, q^n}{n!} e^{-qt}=:c_n\,e^{-qt},
\end{equation*}
for every $t\in\R$. For $t\in\R$, let $s=e^{-t}$. We have
\begin{align*}
\oZ(s \, \Char_K) = \oY(\Ind_K+t) &= c_0\, e^{-qt}+c_n\, e^{-qt} V_n(K)\\
&= c_0\, s^q + c_n\, s^q V_n(K)\\
&=c_0 \left(\int_0^{+\infty} V_0(\{s\,\Char_K \geq r\}) \d r\right)^q + c_n \int_0^{+\infty} V_n(\{(s\,\Char_K)^q \geq r\}) \d r\\
&=c_0 V_0(s\, \Char_K)^q+c_n V_n((s\, \Char_K)^q),  
\end{align*}
for every $K\in\Kn$. Since $\oY$ is uniquely determined by its values on indicator functions and
\begin{equation*}
f\mapsto c_0 V_0(f)^q + c_n V_n(f^q)
\end{equation*}
defines a continuous, equi-affinely invariant valuation, the proof is complete.\hfill\qedsymbol

\section{$\SLn$ Covariant Minkowski Valuations on $\LC$}
\label{se:sln_cov_mink_val_on_lc}
In this section we discuss the operators that appear in Theorem~\ref{thm:class_cov} and show that they are continuous, equi-affinely covariant Minkowski valuations.\par
In \cite{bobkov_colesanti_fragala} it is proposed to generalize a function $\Phi:\Kn\to[0,+\infty)$ to $\LC$ via
\begin{equation*}
\Phi(f)=\int_0^{+\infty} \Phi(\{f\geq t\}) \d t,
\end{equation*}
for $f\in\LC$. Note, that this construction implicitly uses the general convention $\Phi(\emptyset)=0$. Following this approach, the level set body $[f]$ of $f\in\LC$ is the convex body that is defined via
\begin{equation*}
h([f],z)=\int_{0}^{+\infty} h(\{f\geq t\},z) \d t,
\end{equation*}
for every $z\in\Rn$.

\begin{lemma}
\label{le:id_on_lc_is_a_val}
For every $q>0$, the map
\begin{equation}
\label{eq:lvl_set_body_f_q}
f\mapsto [f^q]
\end{equation}
is a continuous, equi-affinely covariant Minkowski valuation on $\LC$ that is homogeneous of degree $q$.
\end{lemma}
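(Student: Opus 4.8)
The plan is to pass everywhere to support functions, since $[\,\cdot\,]$ is defined through them, and to verify the required properties one after another: well-definedness, homogeneity of degree $q$, $\SLn$ and translation covariance, the Minkowski valuation identity, and continuity. First I would fix $f=e^{-u}\in\LC$; for each $t>0$ the map $z\mapsto h(\{f^q\geq t\},z)$ is sublinear (a support function when $\{f^q\geq t\}\neq\emptyset$, and $\equiv 0$ otherwise), so $z\mapsto\int_0^{+\infty}h(\{f^q\geq t\},z)\d t$ is sublinear as well. To see it is finite I would invoke the cone property (Lemma~\ref{le:cone}), which yields $a>0$ and $b\in\R$ with $f(x)<e^{-a|x|-b}$; hence $\{f^q\geq t\}=\{f\geq t^{1/q}\}$ lies in the ball of radius $\max\{0,(-q^{-1}\log t-b)/a\}$ and is empty once $t>(\max f)^q$, so the integrand is dominated by an $L^1(0,+\infty)$ function of $t$ and $[f^q]$ is a well-defined convex body. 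Homogeneity of degree $q$ is then immediate from $\{(sf)^q\geq t\}=\{f^q\geq s^{-q}t\}$ after substituting $t\mapsto s^qt$.

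The covariances and the valuation identity reduce to elementary support-function identities for level sets. For $\phi\in\SLn$ one has $\{(f\circ\phi^{-1})^q\geq t\}=\phi\{f^q\geq t\}$, and integrating $h(\phi K,z)=h(K,\phi^\top z)$ gives $h([(f\circ\phi^{-1})^q],z)=h([f^q],\phi^\top z)=h(\phi[f^q],z)$. For $x\in\Rn$ one has $\{(f\circ\tau_x^{-1})^q\geq t\}=\{f^q\geq t\}+x$; applying $h(K+x,z)=h(K,z)+z\cdot x$ on the nonempty level sets (i.e.\ for $t\leq(\max f)^q$) and $h(\emptyset,z)=0$ elsewhere, integration yields $h([(f\circ\tau_x^{-1})^q],z)=h([f^q],z)+(\max f)^q\,(z\cdot x)$, so the map is translation covariant with associated functional $f\mapsto(\max f)^q$. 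For the valuation property, let $f,g,f\mx g,f\mn g\in\LC$; then for every $t>0$, $\{(f\mx g)^q\geq t\}=\{f^q\geq t\}\cup\{g^q\geq t\}$ and $\{(f\mn g)^q\geq t\}=\{f^q\geq t\}\cap\{g^q\geq t\}$, and since the union is a convex body one has $h(\{f^q\geq t\}\cup\{g^q\geq t\},\cdot)=h(\{f^q\geq t\},\cdot)\mx h(\{g^q\geq t\},\cdot)$ and $h(\{f^q\geq t\}\cap\{g^q\geq t\},\cdot)=h(\{f^q\geq t\},\cdot)\mn h(\{g^q\geq t\},\cdot)$; adding the maximum and the minimum and integrating over $t$ (all integrals finite by the first step) gives $h([(f\mx g)^q],z)+h([(f\mn g)^q],z)=h([f^q],z)+h([g^q],z)$.

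The hard part will be continuity. Let $f_k\hto f$, say $f_k=e^{-u_k}$ and $f=e^{-u}$ with $u_k,u\in\CV$ and $u_k\eto u$. By Lemma~\ref{le:lk_conv} the sublevel sets $\{u_k\leq\sigma\}$ converge to $\{u\leq\sigma\}$ in the Hausdorff metric for every $\sigma\neq\min u$, equivalently $h(\{f_k^q\geq t\},z)\to h(\{f^q\geq t\},z)$ uniformly in $z\in\Sph$ for every $t\neq(\max f)^q$, hence for almost every $t$. To interchange limit and integral I would use the uniform cone property (Lemma~\ref{le:uniform_cone}): it gives $a>0$ and $b\in\R$ with $u_k(x),u(x)>a|x|+b$ for all $k$, which by the estimate of the first step bounds $|h(\{f_k^q\geq t\},z)|$ by a fixed function $G\in L^1(0,+\infty)$ of $t$; dominated convergence then gives $h([f_k^q],z)\to h([f^q],z)$ for each $z$. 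Finally, since $G$ also bounds $h([f_k^q],\cdot)$ and $h([f^q],\cdot)$ on $\Sph$, all these bodies lie in a common ball, so their support functions form an equicontinuous family on the compact set $\Sph$; a pointwise-convergent equicontinuous family on a compact set converges uniformly, so $[f_k^q]\to[f^q]$ in the Hausdorff metric. The two genuinely delicate points are this passage to the limit under the integral, where the uniform cone property is essential, and the upgrade from pointwise to uniform — i.e.\ Hausdorff — convergence, which rests on the equi-boundedness of the bodies $[f_k^q]$; everything else is routine.
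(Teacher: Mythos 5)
Your proof is correct and follows the same support-function approach as the paper: the paper disposes of well-definedness, continuity, $\SLn$ covariance and the Minkowski-valuation property in one stroke by citing \cite[Lemma~7.2]{colesanti_ludwig_mussnig_mink}, and then checks homogeneity and translation covariance by essentially the same short integral computation you give. Your version simply unpacks the cited lemma from scratch, and the tools you use for that (the cone property of Lemma~\ref{le:cone} for finiteness, Lemmas~\ref{le:lk_conv} and~\ref{le:uniform_cone} together with dominated convergence and equiboundedness of the bodies $[f_k^q]$ for continuity) are the standard ones used throughout this paper, so the two arguments are substantively identical.
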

\begin{proof}
By \cite[Lemma 7.2]{colesanti_ludwig_mussnig_mink}, the map $f\mapsto [f^q]$ is a well-defined, continuous, $\SLn$ covariant Minkowski valuation on $\LC$. Furthermore, for $s>0$, $x,z\in\Rn$ and $f\in\LC$, we have
\begin{align*}
h([(s\, f\circ\tau_x^{-1})^q],z) &= \int_{0}^{+\infty} h(\{(s\,f\circ\tau_x^{-1})^q \geq t\},z)\d t\\
&= s^q \int_0^{+\infty} h(\tau_x\{f^q\geq t\},z)\d t\\
&= s^q \int_0^{+\infty} h(\{f^q\geq t\},z)\d t + s^q\big(\max\nolimits_{x\in\Rn} f^q(x)\big)\, (x\cdot z)\\
&= s^q \, h([f^q],z) + s^q \, V_0(f^q)\,(x\cdot z).
\end{align*}
Hence, (\ref{eq:lvl_set_body_f_q}) is homogeneous of degree $q$ and translation covariant.
\end{proof}
\noindent
The next lemma will allow us to give a definition of the moment vector for functions in $\LC$.
\begin{lemma}
\label{le:mv_finite}
For every $f\in\LC$ and $z\in\Sph$,
\begin{equation*}
\int_0^{+\infty} | h(\vm(\{f \geq t\}),z) | \d t  < +\infty.
\end{equation*}
\end{lemma}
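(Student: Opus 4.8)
The goal is to show that for each $f\in\LC$ and $z\in\Sph$, the integral $\int_0^{+\infty}|h(\vm(\{f\geq t\}),z)|\d t$ is finite. Write $f=e^{-u}$ with $u\in\CV$, and set $m=\min_{x\in\Rn}u(x)$, so that $M=\max_{x\in\Rn}f(x)=e^{-m}$. For $0<t\leq M$ the superlevel set $K_t:=\{f\geq t\}=\{u\leq -\log t\}$ is a convex body, and for $t>M$ it is empty, so that $h(\vm(K_t),z)=0$ there and the integral is really over $(0,M]$. The plan is to bound $|h(\vm(K_t),z)|=\big|\int_{K_t}x\cdot z\,\d x\big|$ by $\int_{K_t}|x|\d x$ and then control the size of $K_t$ using the cone property.

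First I would invoke Lemma~\ref{le:cone}: there are constants $a>0$ and $b\in\R$ with $u(x)>a|x|+b$ for all $x\in\Rn$. Hence if $x\in K_t$, i.e. $u(x)\leq -\log t$, then $a|x|+b<-\log t$, so $|x|<\frac{-\log t - b}{a}=:\rho(t)$. Thus $K_t$ is contained in the ball $B_{\rho(t)}$ of radius $\rho(t)$ centered at the origin (note $\rho(t)\to+\infty$ only as $t\to 0^+$, and for $t$ near $M$ the radius $\rho(t)$ may need to be replaced by $\max\{\rho(t),0\}$, but this is harmless). Therefore
\begin{equation*}
|h(\vm(K_t),z)|\leq \int_{K_t}|x\cdot z|\d x \leq \int_{K_t}|x|\d x \leq \int_{B_{\rho(t)}}|x|\d x = n\omega_n\int_0^{\rho(t)} r^{n}\d r = \frac{n\omega_n}{n+1}\rho(t)^{n+1},
\end{equation*}
where $\omega_n$ is the volume of the unit ball. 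It remains to check that $\int_0^{M}\rho(t)^{n+1}\d t<+\infty$; since $\rho(t)^{n+1}$ is a polynomial in $\log t$ of degree $n+1$, and $\int_0^M |\log t|^{n+1}\d t<+\infty$ (the logarithmic singularity at $0$ is integrable, being dominated by any negative power of $t$), this integral converges. Combining the two displays via the dominated convergence / monotonicity of the integral gives the claim.

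The only genuinely delicate point is making sure the estimate $|x|<\rho(t)$ is applied on the correct range of $t$ and that $\rho(t)$ stays nonnegative where it matters; this is a bookkeeping issue rather than a real obstacle, handled by splitting $(0,M]$ into $(0,t_0]$ and $[t_0,M]$ for a suitable $t_0$ (on the compact interval $[t_0,M]$ the bodies $K_t$ are uniformly bounded, e.g. by $K_{t_0}$ itself since $t\mapsto K_t$ is decreasing, so the integrand is bounded there). I do not expect any serious difficulty; the entire argument is a one-variable integrability check built on the cone property, Lemma~\ref{le:cone}, plus the elementary bound $|h(\vm(K),z)|\leq\int_K|x|\d x$.
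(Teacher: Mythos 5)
Your proof is correct and follows essentially the same approach as the paper: both use the cone property (Lemma~\ref{le:cone}) to trap $\{f\geq t\}$ inside a ball of radius $\rho(t)=\tfrac{-\log t-b}{a}$, bound $|h(\vm(\{f\geq t\}),z)|$ by a constant times $\rho(t)^{n+1}$, and then observe that $|\log t|^{n+1}$ is integrable near $t=0$. One small simplification you missed: since $u(x)>a|x|+b\geq b$ and $u$ attains its minimum, you have $M=e^{-\min u}<e^{-b}$, so $\rho(t)>0$ on all of $(0,M]$ and no splitting of the interval is actually needed.
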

\begin{proof}
Observe, that for $K\in\Kn$ and $z\in\Sph$
\begin{equation*}
|h(\vm(K),z)| = \left|\int_{K} x\cdot z \d x\right| \leq V_n(K)\,\max\nolimits_{y\in\Sph} |h(K,y)|.
\end{equation*}
Fix $f\in\LC$ and let $u\in\CV$ be such that $f=e^{-u}$. By Lemma \ref{le:cone}, there exist constants $a,b\in\R$ with $a>0$ such that
\begin{equation*}
u(x)> v(x) = a |x|+ b,
\end{equation*}
for every $x\in\Rn$. Hence, for $g=e^{-v}\in\LC$ we have $f< g$ pointwise and therefore
\begin{equation*}
\{f\geq t\}\subset \{g \geq t\} = \big\{x\,:\, |x|\leq \tfrac{-\log t-b}{a}\big\}
\end{equation*}
for every $0<t\leq e^{-b}$. This gives
\begin{equation*}
|h(\vm(\{f\geq t\}),z)| \leq V_n(\{g \geq t\}) \, \max\nolimits_{y\in\Sph} |h(\{g \geq t\},y)| = \tfrac{v_n}{a^{n+1}} \big(-\log t-b \big)^{n+1},
\end{equation*}
for every $0<t\leq e^{-b}$ and $z\in\Sph$, where $v_n$ denotes the volume of the $n$-dimensional unit ball. Thus, using the substitution $t=e^{-s}$, we obtain
\begin{align*}
\begin{split}
\int_0^{+\infty} |h(\vm(\{f\geq t\}),z)| \d t & \leq \tfrac{v_n}{a^{n+1}} \int_0^{e^{-b}} (-\log t-b)^{n+1} \d t\\
& \leq \tfrac{v_n}{a^{n+1}} \int_b^0 (s-b)^{n+1} e^{-s} \d s < +\infty,
\end{split}
\end{align*}
for every $z\in\Rn$.
\end{proof}
\noindent
By Lemma~\ref{le:mv_finite}, the integral
\begin{equation}
\label{eq:int_moment_vectors}
\int_0^{+\infty} h(\vm(\{f\geq t\}),z)\d t
\end{equation}
converges for every $f\in\LC$ and $z\in\Rn$. Since each of the support functions
\begin{equation*}
z\mapsto h(\vm(\{f\geq t\}),z)
\end{equation*}
is sublinear, it is easy to see that (\ref{eq:int_moment_vectors}) defines a sublinear function in $z$ and thus is the support function of a convex body $\vm(f)\in\Kn$. Using the definition of the moment vector and the layer-cake principle, we obtain
\begin{equation*}
h(\vm(f),z) = \int_0^{+\infty} \int_{\{f\geq t\}} x \cdot z \d x \d t = \int_0^{+\infty} \int_{\Rn} \Char_{\{ f\geq t\}}(x)\,(x\cdot z) \d x \d t = \int_{\Rn} f(x)\, (x\cdot z) \d x.
\end{equation*} 
Hence,
\begin{equation*}
\vm(f)=\int_{\Rn} f(x)\,x \d x
\end{equation*}
is an element of $\Rn$ and will be called the moment vector of $f\in\LC$.

\begin{lemma}
\label{le:mv_on_lc_is_a_val}
For every $q>0$, the map
\begin{equation}
\label{eq:mv_f_q}
f \mapsto \vm(f^q)
\end{equation}
is a continuous, equi-affinely covariant Minkowski valuation on $\LC$ that is homogeneous of degree $q$.
\end{lemma}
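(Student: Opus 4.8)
The plan is to note first that $\vm(f^q)=\int_{\Rn} f^q(x)\,x\d x$ is always a point of $\Rn$, hence an element of $\Kn$, and that Minkowski addition of points is just vector addition in $\Rn$; so every claim except continuity will follow from elementary manipulations of this integral, using that $f^q\in\LC$ for $q>0$ and that by Lemma~\ref{le:mv_finite} the integrand is absolutely integrable.

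For the valuation property I would use that $t\mapsto t^q$ is increasing on $[0,+\infty)$, so that $(f\mx g)^q=f^q\mx g^q$ and $(f\mn g)^q=f^q\mn g^q$, together with the elementary pointwise identity $f^q\mx g^q+f^q\mn g^q=f^q+g^q$. When $f,g,f\mx g,f\mn g\in\LC$, all four of these functions lie in $\LC$, so all four integrals $\int(\,\cdot\,)(x)\,(x\cdot z)\d x$ converge by Lemma~\ref{le:mv_finite}, and integrating the identity against $x\cdot z$ gives $\vm((f\mx g)^q)+\vm((f\mn g)^q)=\vm(f^q)+\vm(g^q)$. Homogeneity of degree $q$ is immediate from linearity of the integral. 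For $\SLn$ covariance I would substitute $x=\phi y$ in $\int f^q(\phi^{-1}x)\,x\d x$, using $|\det\phi|=1$, to obtain $\phi\,\vm(f^q)$. For translation covariance I would substitute $y=z+x$ in $\int f^q(y-x)\,y\d y$ and split off the term $\big(\int f^q(z)\d z\big)x=V_n(f^q)\,x$, so that the associated function is $f\mapsto V_n(f^q)$, which is finite by Lemma~\ref{le:vn_on_lc_is_a_val}.

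The real work is continuity, which I expect to be the main obstacle. Given $f_k\hto f$, write $f_k=e^{-u_k}$, $f=e^{-u}$ with $u_k\eto u$; since $q>0$ one checks directly from the definition of epi-convergence that $qu_k\eto qu$, so it is enough to treat $q=1$ and show $\int f_k(x)\,x\d x\to\int f(x)\,x\d x$. I would apply dominated convergence: by the uniform cone property (Lemma~\ref{le:uniform_cone}) there are $a>0$ and $b\in\R$ with $u_k(x)>a|x|+b$ and $u(x)>a|x|+b$ for all $k$ and $x$, so $|f_k(x)\,x|\le|x|\,e^{-a|x|-b}$ and $|f(x)\,x|\le|x|\,e^{-a|x|-b}$, with integrable majorant. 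The delicate point is pointwise convergence of $f_k$ to $f$: here I would invoke the classical fact that for proper, lower semicontinuous convex functions $u_k\eto u$ forces $u_k(x)\to u(x)$ at every $x$ outside the boundary of $\dom u$, hence outside a Lebesgue null set, so that $f_k(x)\to f(x)$ for almost every $x\in\Rn$. Dominated convergence then gives $\vm(f_k)\to\vm(f)$, and together with the algebraic part this completes the proof. The step needing most care in the write-up is precisely this passage from hypo-convergence to almost-everywhere pointwise convergence.
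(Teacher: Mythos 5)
Your algebraic steps (valuation property, $\SLn$ and translation covariance, homogeneity) are correct and essentially identical to the paper's, differing only cosmetically: you integrate the pointwise identity $f^q\mx g^q + f^q\mn g^q = f^q + g^q$ directly, while the paper splits the integrals over $\{f\le g\}$ and $\{f>g\}$, which amounts to the same thing. Using Lemma~\ref{le:mv_finite} to guarantee convergence of all four integrals is the right move.

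For continuity you take a genuinely different route. The paper keeps the layer-cake structure $h(\vm(f),z)=\int_0^{+\infty}h(\vm(\{f\ge t\}),z)\d t$, derives an integrable majorant in $t$ from the uniform cone property, invokes Hausdorff convergence $\{f_k\ge t\}\to\{f\ge t\}$ for $t\neq\max f$ together with continuity of $K\mapsto\vm(K)$ on $\Kn$, and then applies dominated convergence in the $t$-variable. You instead work with the integral over $x$, majorize $|f_k(x)x|$ by $|x|e^{-a|x|-b}$ via Lemma~\ref{le:uniform_cone}, and invoke the fact that epi-convergence of convex functions implies pointwise convergence outside $\partial(\dom u)$, a Lebesgue null set. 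This is valid, but note that the standard reference (Rockafellar--Wets, Theorem~7.17, giving locally uniform convergence off the boundary of $\dom u$) is stated under the hypothesis $\operatorname{int}(\dom u)\neq\emptyset$; in the degenerate case $\operatorname{int}(\dom u)=\emptyset$ you should observe separately that for $x\notin\operatorname{cl}(\dom u)$ the lower epi-limit inequality with the constant sequence $x_k=x$ already forces $u_k(x)\to+\infty$, and that $\operatorname{cl}(\dom u)$ is then itself a null set. With that small case distinction made explicit, your continuity argument is clean and arguably more direct than the paper's; what the paper's approach buys is that it runs entirely through the level-set/layer-cake machinery already set up for $[f]$ and avoids citing any pointwise-convergence theorem beyond what is in the paper's preliminaries.
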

\begin{proof}
Since $f^q\in\LC$ for every $f\in\LC$ and $q>0$, the map $f\mapsto \vm(f^q)$ is well-defined. For $\phi\in\SLn$ we have
\begin{equation*}
\vm((f\circ \phi^{-1})^q) = \int_{\Rn} (f^q\circ \phi^{-1})(x)\, x \d x = \int_{\Rn} f^q(x) \, \phi x \d x = \phi \vm(f^q),
\end{equation*}
which shows $\SLn$ covariance. Furthermore, for $x\in\Rn$ we obtain
\begin{equation*}
\vm((f\circ\tau_x^{-1})^q) = \int_{\Rn} f^q(y-x)\,y \d y = \int_{\Rn} f^q(y)\,y \d y + x \int_{\Rn} f^q(y) \d y = \vm(f^q) + V_n(f^q)\, x,
\end{equation*}
and for $s>0$
\begin{equation*}
\vm((sf)^q) = \int_{\Rn} (sf)^q(x)\, x \d x = s^q \int_{\Rn} f^q(x) \, x \d x = s^q \vm(f^q).
\end{equation*}
Hence, (\ref{eq:mv_f_q}) is equi-affinely covariant. In order to show the valuation property, let $f,g\in\LC$ such that $f\mx g\in\LC$. Then,
\begin{eqnarray*}
\vm((f\mn g)^q) &=& \int_{\{f \leq g\}} f^q(x)\, x \d x + \int_{\{f > g\}} g^q(x)\, x \d x\\
\vm((f\mx g)^q) &=& \int_{\{f \leq g\}} g^q(x)\, x \d x + \int_{\{f > g\}} f^q(x)\, x \d x.
\end{eqnarray*}
Hence,
\begin{equation*}
\vm((f\mn g)^q) + \vm((f\mx g)^q) = \vm(f^q) +\vm(g^q).
\end{equation*}
It remains to show continuity. For $f_k,f\in\LC$ such that $f_k\hto f$, there exist $u_k,u\in\LC$ such that $f_k=e^{-u_k}$ for every $k\in\N$, $f=e^{-u}$ and $u_k\eto u$. By Lemma~\ref{le:uniform_cone}, there exist $a>0$ and $b\in\R$ such that
\begin{equation*}
u_k(x)>a|x|+b\qquad\text{and}\qquad u(x)>a|x|+b,
\end{equation*}
for every $k\in\N$ and $x\in\Rn$. Similar as in the proof of Lemma~\ref{le:mv_finite}, this gives
\begin{eqnarray*}
|h(\vm(\{f\geq t\}),\cdot)| &\leq& \tfrac{v_n}{a^{n+1}}(-\log t - b)^{n+1}\\
|h(\vm(\{f_k\geq t\}),\cdot)| &\leq& \tfrac{v_n}{a^{n+1}}(-\log t - b)^{n+1},
\end{eqnarray*}
which shows that these functions are dominated by an integrable function. Furthermore, Lemma~\ref{le:epi_lvl_sets} and the continuity of the moment vector on $\Kn$ imply that\break$h(\vm(\{f_k\geq t\}),\cdot)\to h(\vm(\{f\geq t\}),\cdot)$ pointwise for every $t\neq \max_{x\in\Rn} f(x)$. Hence, by the dominated convergence theorem, we have
\begin{equation*}
h(\vm(f_k),\cdot) = \int_{0}^{+\infty} h(\vm(\{f_k\geq t\},\cdot) \d t \longrightarrow \int_{0}^{+\infty} h(\vm(\{f\geq t\},\cdot) \d t = h(\vm(f),\cdot),
\end{equation*}
pointwise, which implies Hausdorff convergence of $\vm(f_k)$ to $\vm(f)$. The claim now follows, since $f\mapsto f^q$ is continuous and $f_k^q \hto f^q$.
\end{proof}

\section{Classification of $\SLn$ Covariant Minkowski Valuations}
\begin{sloppypar}
The next result extends the basic observation that the associated function $\oZ^0:\Kn\to\Rn$ of a translation covariant Minkowski valuation $\oZ:\Kn\to\Kn$ is a translation invariant real-valued valuation. See for example \cite[Lemma 10.5]{mcmullen_schneider} for a corresponding result on vector-valued valuations. Similarly, $\SLn$ covariance of $\oZ$ implies $\SLn$ invariance of $\oZ^0$. Hence, it is no coincidence that the associated function of the Minkowski valuation described in Corollary~\ref{cor:haberl_covariant} is a linear combination of the Euler characteristic and volume.
\end{sloppypar}

\begin{lemma}
\label{le:z0_is_a_val}
If $\oZ:\LC\to\Kn$ is a continuous, equi-affinely covariant Minkowski valuation, then its associated function $\oZ^0:\LC\to\R$ is a continuous, equi-affinely invariant valuation. Furthermore, $\oZ$ and $\oZ^0$ have the same degree of homogeneity.
\end{lemma}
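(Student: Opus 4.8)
The plan is to turn every assertion about $\oZ^0$ into an assertion about the coefficient of $x$ in the translation-covariance relation $\oZ(f\circ\tau_x^{-1})=\oZ(f)+\oZ^0(f)x$, read off through support functions. Passing to support functions and using $h(K+y,z)=h(K,z)+y\cdot z$, this relation becomes
\[
h(\oZ(f\circ\tau_x^{-1}),z)=h(\oZ(f),z)+\oZ^0(f)\,(x\cdot z)
\]
for all $f\in\LC$, $x\in\Rn$ and $z\in\Sph$. Choosing $x=z=e_1$ gives the explicit formula $\oZ^0(f)=h(\oZ(f\circ\tau_{e_1}^{-1}),e_1)-h(\oZ(f),e_1)$, which already shows that $\oZ^0$ is uniquely determined by $\oZ$. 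Continuity of $\oZ^0$ then follows from this formula, the continuity of $\oZ$, and the continuity of $f\mapsto f\circ\tau_{e_1}^{-1}$ on $\LC$; the last of these is immediate from the definition of epi-convergence, since translating the sequences occurring in conditions (i) and (ii) by a fixed vector produces the corresponding sequences for the translated functions. Together with continuity of the support function this yields $\oZ^0(f_k)\to\oZ^0(f)$ whenever $f_k\hto f$.

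For the valuation property I would fix $f,g\in\LC$ with $f\mx g,f\mn g\in\LC$. For any $x\in\Rn$ the functions $f\circ\tau_x^{-1}$ and $g\circ\tau_x^{-1}$ again lie in $\LC$, with pointwise maximum $(f\mx g)\circ\tau_x^{-1}$ and pointwise minimum $(f\mn g)\circ\tau_x^{-1}$, both in $\LC$; hence the Minkowski-valuation identity for $\oZ$ applies to this pair. Writing that identity at $z\in\Sph$, expanding each of the four support functions via the displayed relation, and subtracting the valuation identity for $\oZ$ applied to $f$ and $g$ themselves (which cancels all the ``body'' contributions), one is left with $(\oZ^0(f\mx g)+\oZ^0(f\mn g))\,(x\cdot z)=(\oZ^0(f)+\oZ^0(g))\,(x\cdot z)$. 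Choosing $x,z$ with $x\cdot z\ne 0$ gives $\oZ^0(f\mx g)+\oZ^0(f\mn g)=\oZ^0(f)+\oZ^0(g)$.

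The invariance and homogeneity statements all follow the same pattern: evaluate $\oZ$ on a doubly transformed function in two ways and compare the coefficient of $x$. For translation invariance, $(f\circ\tau_y^{-1})\circ\tau_x^{-1}=f\circ\tau_{x+y}^{-1}$, so applying translation covariance twice yields $\oZ(f)+\oZ^0(f)x+\oZ^0(f\circ\tau_y^{-1})y$ on one side and $\oZ(f)+\oZ^0(f)(x+y)$ on the other, forcing $\oZ^0(f\circ\tau_y^{-1})=\oZ^0(f)$. For $\SLn$ invariance the key identity is $\tau_x\circ\phi=\phi\circ\tau_{\phi^{-1}x}$, which gives $(f\circ\phi^{-1})\circ\tau_x^{-1}=(f\circ\tau_{\phi^{-1}x}^{-1})\circ\phi^{-1}$; applying $\oZ$, then $\SLn$ covariance, then translation covariance produces $\phi\oZ(f)+\oZ^0(f)x=\oZ(f\circ\phi^{-1})+\oZ^0(f)x$, and comparison with translation covariance applied directly to $f\circ\phi^{-1}$ gives $\oZ^0(f\circ\phi^{-1})=\oZ^0(f)$. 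For homogeneity, if $q$ is the degree of $\oZ$ then $(sf)\circ\tau_x^{-1}=s\,(f\circ\tau_x^{-1})$ and $q$-homogeneity of $\oZ$ give $\oZ((sf)\circ\tau_x^{-1})=s^q\oZ(f)+s^q\oZ^0(f)x=\oZ(sf)+s^q\oZ^0(f)x$, hence $\oZ^0(sf)=s^q\oZ^0(f)$; in particular $\oZ$ and $\oZ^0$ share the same degree of homogeneity.

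I do not expect a real obstacle here: once the relation $h(\oZ(f\circ\tau_x^{-1}),z)=h(\oZ(f),z)+\oZ^0(f)\,(x\cdot z)$ is available, each claim reduces to an elementary comparison of coefficients of $x$. The only point that I would write out with a little care is the continuity statement, specifically the stability of hypo-convergence under a fixed translation, since this is what lets me pass the formula for $\oZ^0$ to the limit.
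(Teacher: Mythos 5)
Your proposal is correct and follows essentially the same approach as the paper: it turns each property of $\oZ^0$ into an equality of two translation-covariance expansions of $\oZ$ applied to suitably composed functions and compares the coefficient of $x$ (using cancellation in the Minkowski semigroup, or equivalently support functions). The only cosmetic difference is that you first isolate the explicit formula $\oZ^0(f)=h(\oZ(f\circ\tau_{e_1}^{-1}),e_1)-h(\oZ(f),e_1)$, which the paper leaves implicit; this is a pleasant way to package the continuity step. (Note two small slips in your write-up that do not affect the argument: in the translation-invariance line the roles of $x$ and $y$ are swapped, and in the $\SLn$ line the displayed equality should carry $\oZ^0(f\circ\phi^{-1})x$ rather than $\oZ^0(f)x$ on the side coming from direct application of translation covariance to $f\circ\phi^{-1}$.)
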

\begin{proof}
Let $x\in\Rn \backslash\{0\}$ and $f,g\in\LC$ be such that $f\vee g\in\LC$. Since
\begin{equation*}
(f\circ \tau_x^{-1}) \vee (g\circ \tau_x^{-1}) = (f \vee g)\circ \tau_x^{-1},\quad (f\circ \tau_x^{-1}) \wedge (g\circ \tau_x^{-1}) = (f \wedge g)\circ \tau_x^{-1},
\end{equation*}
it follows from the translation covariance and the valuation property of $\oZ$ that
\begin{align*}
\oZ(f\circ \tau_x^{-1})+\oZ(g\circ \tau_x^{-1})&=\oZ((f\vee g)\circ\tau_x^{-1})+\oZ((f\wedge g)\circ \tau_x^{-1})\\
&= \oZ(f\vee g) + \oZ(f\wedge g)+\oZ^0(f\vee g)x+\oZ^0(f\wedge g)x.
\end{align*}
On the other hand,
\begin{align*}
\oZ(f\circ \tau_x^{-1})+\oZ(g\circ \tau_x^{-1}) &= \oZ(f)+\oZ^0(f)x+\oZ(g)+\oZ^0(g)x\\
&= \oZ(f\vee g) + \oZ(f\wedge g) + \oZ^0(f)x+\oZ^0(g)x.
\end{align*}
Hence, $\oZ^0$ is a valuation. Now, for arbitrary $y\in\Rn$, we have
\begin{align*}
\begin{split}
\oZ(f)+\oZ^0(f)x+\oZ^0(f)y&=\oZ(f\circ\tau_{x+y}^{-1})\\
&=\oZ(f\circ\tau_y^{-1}\circ\tau_x^{-1})\\
&=\oZ(f\circ \tau_y^{-1})+\oZ^0(f\circ \tau_y^{-1})x\\
&=\oZ(f)+\oZ^0(f)y+\oZ^0(f\circ \tau_y^{-1})x,
\end{split}
\end{align*}
and therefore $\oZ^0(f)=\oZ^0(f\circ\tau_y^{-1})$. For $\phi\in\SLn$ observe that
\begin{equation*}
(\tau_x^{-1}\circ\phi^{-1})(z)=\phi^{-1}z - x = \phi^{-1} (z-\phi x) = (\phi^{-1} \circ \tau_{\phi x}^{-1})(z)
\end{equation*}
for every $z\in\Rn$ and therefore
\begin{align*}
\begin{split}
\phi \oZ(f)+\oZ^0(f)\phi x &= \phi \oZ(f\circ \tau_x^{-1})\\
&=\oZ(f\circ\tau_x^{-1}\circ\phi^{-1})\\
&=\oZ(f\circ \phi^{-1} \circ \tau^{-1}_{\phi x})\\
&= \oZ(f\circ \phi^{-1}) + \oZ^0(f\circ \phi^{-1}) \phi x\\
&= \phi \oZ(f)+\oZ^0(f\circ \phi^{-1}) \phi x.
\end{split}
\end{align*}
Hence, $\oZ^0$ is $\SLn$ invariant. Moreover, for $s>0$ we have
\begin{multline*}
s^q \oZ(f)+s^q \oZ^0(f)x = s^q \oZ(f\circ \tau_x^{-1})=\oZ(s(f\circ\tau_x^{-1}))=\oZ((sf)\circ \tau_x^{-1}) = s^q \oZ(f) + \oZ^0(sf)x.
\end{multline*}
Lastly, if $f_k,f\in\LC$ are such that $\hlim_{k\to \infty} f_k = f$, then also $\hlim_{k\to\infty} f_k\circ \tau_x^{-1} = f\circ \tau_x^{-1}$. Hence, by the continuity of $\oZ$,
\begin{equation*}
\oZ(f_k) + \oZ^0(f_k)x = \oZ(f_k\circ\tau_x^{-1}) \longrightarrow \oZ(f\circ\tau_x^{-1}) = \oZ(f)+\oZ^0(f)x.
\end{equation*}
\end{proof}

\noindent
For the remainder of this section, let $n\geq 3$.

\begin{lemma}
\label{le:constants}
Let $\oZ:\LC\to\Kn$ be a continuous, equi-affinely covariant Minkowski valuation. There exist constants $c_1,c_2,d_1,d_2,d_4 \geq 0,\,c_3,d_3\in\R$ and $q\in\R$, with $q>0$ if $c_3\neq 0$, such that
\begin{align*}
\oZ(s\,e^{-\l_K}) \, &= \, s^q(d_1 K + d_2 (-K) + d_4 \vm(K) + d_3 \oM(K)),
\intertext{for every $K\in\Ko$ and $s>0$ and}
\oZ(s\,\Char_K) \, &= \, s^q(c_1 K + c_2 (-K) + c_3 \vm(K)),
\end{align*}
for every $K\in\Kn$ and $s> 0$. Furthermore,
\begin{equation*}
\oZ^0(f)=(c_1-c_2)V_0(f)^q+ c_3 V_n(f^q),
\end{equation*}
for every $f\in\LC$.
\end{lemma}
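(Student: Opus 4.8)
The plan is to restrict $\oZ$ to the two families $\{\Char_K : K\in\Kn\}$ and $\{e^{-\l_K} : K\in\Ko\}$, on which $\oZ$ collapses to a classical Minkowski valuation on convex bodies, to apply the known classification results (Corollary~\ref{cor:haberl_covariant} and Theorem~\ref{thm:haberl_covariant}), and then to read off $\oZ^0$ from Lemma~\ref{le:z0_is_a_val} together with Theorem~\ref{thm:class_inv}. Throughout write $q\in\R$ for the degree of homogeneity of $\oZ$, so that $\oZ(sg)=s^q\oZ(g)$ for all $g\in\LC$ and $s>0$.

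First I would treat indicator functions. Since $\Char_{K\cup L}=\Char_K\mx\Char_L$ and $\Char_{K\cap L}=\Char_K\mn\Char_L$ whenever $K,L\in\Kn$ with $K\cup L\in\Kn$ (in which case $K\cap L\in\Kn$ as well), the map $K\mapsto\oZ(\Char_K)$ is a valuation on $\Kn$. It is continuous: if $K_j\to K$ in the Hausdorff metric, then $\{\Ind_{K_j}\leq t\}=K_j\to K=\{\Ind_K\leq t\}$ for $t\geq 0$, while both sets are empty for $t<0$, so $\Ind_{K_j}\eto\Ind_K$ by Lemma~\ref{le:epi_lvl_sets} and hence $\Char_{K_j}\hto\Char_K$. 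As $\Char_K\circ\phi^{-1}=\Char_{\phi K}$ and $\Char_K\circ\tau_x^{-1}=\Char_{K+x}$, the $\SLn$ and translation covariance of $\oZ$ pass to this map, the latter in the form $\oZ(\Char_{K+x})=\oZ(\Char_K)+\oZ^0(\Char_K)x$. By Corollary~\ref{cor:haberl_covariant} there are $c_1,c_2\geq 0$ and $c_3\in\R$ with $\oZ(\Char_K)=c_1K+c_2(-K)+c_3\vm(K)$, and homogeneity gives $\oZ(s\,\Char_K)=s^q(c_1K+c_2(-K)+c_3\vm(K))$.

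Next I would treat the functions $e^{-\l_K}$, $K\in\Ko$. From $\{\l_K\leq t\}=tK$ for $t\geq 0$ one verifies at the level of epigraphs (positive hulls of $K\times\{1\}$) that $\l_K\mn\l_L=\l_{K\cup L}$ whenever $K\cup L\in\Ko$ and $\l_K\mx\l_L=\l_{K\cap L}$ always, so that $e^{-\l_K}\mx e^{-\l_L}=e^{-\l_{K\cup L}}$ and $e^{-\l_K}\mn e^{-\l_L}=e^{-\l_{K\cap L}}$; hence $K\mapsto\oZ(e^{-\l_K})$ is a valuation on $\Ko$. It is continuous because $K_j\to K$ in $\Ko$ yields $tK_j\to tK$ for every $t\geq 0$, whence $\l_{K_j}\eto\l_K$ by Lemma~\ref{le:epi_lvl_sets} and $e^{-\l_{K_j}}\hto e^{-\l_K}$; and it is $\SLn$ covariant since $\l_K\circ\phi^{-1}=\l_{\phi K}$. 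By Theorem~\ref{thm:haberl_covariant} there are $d_1,d_2,d_4\geq 0$ and $d_3\in\R$ with $\oZ(e^{-\l_K})=d_1K+d_2(-K)+d_3\vm(K)+d_4\oM(K)$, and homogeneity gives the claimed formula for $\oZ(s\,e^{-\l_K})$. The step that demands the most care is exactly this transfer of the lattice operations and of continuity along the two parametrizations --- in particular the identity $\l_K\mn\l_L=\l_{K\cup L}$, which genuinely requires $K\cup L$ to be convex --- so that the classical classification theorems really do apply.

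Finally, for $\oZ^0$: by Lemma~\ref{le:z0_is_a_val}, $\oZ^0$ is a continuous, equi-affinely invariant valuation on $\LC$ of the same degree of homogeneity $q$, so Theorem~\ref{thm:class_inv} supplies $\alpha,\beta\in\R$, with $q>0$ if $\beta\neq 0$, such that $\oZ^0(f)=\alpha V_0(f)^q+\beta V_n(f^q)$ for every $f\in\LC$. On the other hand, expanding $\oZ(\Char_{K+x})=c_1(K+x)+c_2(-(K+x))+c_3\vm(K+x)$ by means of $\vm(K+x)=\vm(K)+V_n(K)x$ and comparing with $\oZ(\Char_{K+x})=\oZ(\Char_K)+\oZ^0(\Char_K)x$ yields $\oZ^0(\Char_K)=c_1-c_2+c_3V_n(K)$; since $V_0(\Char_K)=1$ and $\Char_K^q=\Char_K$ whenever $q>0$, this matches $(c_1-c_2)V_0(\Char_K)^q+c_3V_n(\Char_K^q)$. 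Testing the two expressions for $\oZ^0$ against $\Char_K$, first for a one-point $K$ and then for a $K$ of positive volume, forces $\alpha=c_1-c_2$ and $\beta=c_3$; therefore $\oZ^0(f)=(c_1-c_2)V_0(f)^q+c_3V_n(f^q)$ for all $f\in\LC$, with $q>0$ when $c_3\neq 0$, as claimed.
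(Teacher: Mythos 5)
Your proof is correct and follows essentially the same route as the paper: restrict $\oZ$ to the families $\{\Char_K\}$ and $\{e^{-\l_K}\}$, verify via the level-set identities and Lemma~\ref{le:epi_lvl_sets} that these restrictions are continuous Minkowski valuations on $\Kn$ and $\Ko$ respectively, apply Corollary~\ref{cor:haberl_covariant} and Theorem~\ref{thm:haberl_covariant}, and then identify $\oZ^0$ by combining Lemma~\ref{le:z0_is_a_val}, Theorem~\ref{thm:class_inv}, and the direct computation of $\oZ^0(\Char_K)$ from translation covariance. The only cosmetic difference is that you invoke Lemma~\ref{le:z0_is_a_val} to conclude a priori that the exponent in the formula for $\oZ^0$ agrees with the degree of homogeneity $q$ of $\oZ$, whereas the paper introduces an a priori independent exponent $\widetilde q$ and recovers $\widetilde q = q$ from the comparison on $s\,\Char_{\{0\}}$; both are fine.
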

\begin{proof}
Since for $K,L\in\Ko$ such that $K\cup L\in\Ko$ we have
\begin{equation*}
\l_{K\cup L}=\l_K\mn \l_L,\qquad \l_{K\cap L}=\l_K\mx \l_L,
\end{equation*}
the map
\begin{equation}
\label{eq:k_to_y_lk}
K\mapsto \oZ(e^{-\l_K})
\end{equation}
defines a Minkowski valuation on $\Ko$. Furthermore, $\l_{\phi K}=\l_K\circ\phi^{-1}$ for every $\phi\in\SLn$ and $\l_{K_k}\eto \l_K$ for every sequence $K_j$ that converges to $K$ in $\Ko$ by Lemma~\ref{le:epi_lvl_sets}. Hence, (\ref{eq:k_to_y_lk}) defines a continuous, $\SLn$ covariant Minkowski valuation on $\Ko$. It follows from Theorem~\ref{thm:haberl_covariant} that there exist constants $d_1,d_2,d_4\geq 0$ and $d_3\in\R$ such that
\begin{equation*}
\oZ(s\, e^{-\l_K})=s^q \oZ(e^{-\l_K})=s^q(d_1 K + d_2(-K)+d_3\vm(K)+\d_4\oM(K))
\end{equation*}
for every $K\in\Ko$ and $s>0$, where $q\in\R$ denotes the degree of homogeneity of $\oZ$. Similarly, $K\mapsto \oZ(\Char_K)$ defines a continuous, $\SLn$ and translation covariant Minkowski valuation on $\Kn$. Hence, by Corollary~\ref{cor:haberl_covariant} there exist constants $c_1,c_2\geq 0$ and $c_3\in\R$ such that
\begin{equation*}
\oZ(s \Char_K) = s^q (c_1 K + c_2 (-K) + c_3 \vm(K)), 
\end{equation*}
for every $K\in\Kn$ and $s>0$.\par
For $K\in\Kn$, $x\in\Rn\backslash\{0\}$ and $s>0$ let $f:=s\Char_K\in\LC$ and observe that
\begin{align*}
\oZ(f)+\oZ^0(f)x &= \oZ(f\circ\tau_x^{-1})\\
&= \oZ(s\Char_{K+x})\\
&= s^q (c_1 K + c_2 (-K) + c_3 \vm(K) + (c_1-c_2 + c_3 V_n(K)) x)\\
&= \oZ(f) + s^q(c_1-c_2+c_3 V_n(K))x.
\end{align*}
On the other hand, by Lemma~\ref{le:z0_is_a_val} and Theorem~\ref{thm:class_inv}, there exist $\widetilde{c}_0,\widetilde{c}_n\in\R$ and $\widetilde{q}\in\R$, with $\widetilde{q}>0$ if $\widetilde{c}_n\neq 0$, such that
\begin{equation*}
\oZ^0(g)=\widetilde{c}_0 V_0(g)^{\widetilde q} + \widetilde{c}_n V_n(g^{\widetilde q}), 
\end{equation*} 
for every $g\in\LC$. Noting, that $V_0(f)^q=s^q$ and $V_n(f^q)=s^q V_n(K)$, a comparison shows that
\begin{equation*}
(c_1-c_2)s^q V_0(K)+c_3 s^q V_n(K) = \oZ^0(s\Char_K) = \widetilde{c}_0 s^{\widetilde q} V_0(K) + \widetilde{c}_n s^{\widetilde q} V_n(K),
\end{equation*}
for every $s>0$ and $K\in\Kn$. Choosing $K=\{0\}$ and $s=1$ gives $c_1-c_2 = \widetilde{c}_0$. With the same $K$ and arbitrary $s>0$ we have $q=\widetilde{q}$ and with any full-dimensional $K\in\Kn$ we obtain $\widetilde{c}_n=c_3$.
\end{proof}

\begin{lemma}
\label{le:c1_c2}
Let $\oZ:\LC\to\Kn$ be a continuous, equi-affinely covariant Minkowski valuation. If $c_1,c_2,d_1,d_2,q$ denote the constants from Lemma~\ref{le:constants}, then $c_1=q\,d_1$ and $c_2=q\,d_2$.
\end{lemma}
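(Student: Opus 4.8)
The plan is to exploit the relation $V_0(f^q) = V_0(f)^q$ together with the explicit formulas for $\oZ$ on the functions $s\,e^{-\l_K}$ and $s\,\Char_K$ from Lemma~\ref{le:constants}, by interpolating between these two families via a single one-parameter family of log-concave functions on which $\oZ$ can be computed in two ways. The natural candidate is the family $e^{-q\l_K + t}$ or, after rescaling, $s\,e^{-q\,\l_K}$; more precisely I would look at functions of the form $e^{-(\l_K \vee (\Ind_K + c))}$ or at scaled cone functions $e^{-s\l_K}$ and use the valuation property to connect a truncated cone to an indicator function. Concretely, for $r>0$ the function $\l_K \vee (\l_K(r\,\cdot))$-type manipulations, or better the decomposition $\Ind_{rK} \wedge \l_K$, allow one to write an indicator as a minimum involving cone functions, and applying $\oZ$ and the valuation equation expresses $\oZ(\Char_K)$-data in terms of $\oZ(e^{-\l_K})$-data.

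First I would fix a full-dimensional $K\in\Ko$ and consider, for $s>0$, the function $g_s := e^{-s\l_K}$, whose superlevel sets are $\{g_s \ge t\} = -\tfrac{\log t}{s}\,K$ for $0 < t \le 1$. From Lemma~\ref{le:constants}, $\oZ(g_s)$ has the known four-term form with the same constants $d_1,d_2,d_3,d_4$ (the degree-$q$ homogeneity only rescales, and $\l_K$ is not being scaled as a function but composed with a dilation of $K$, so one must be careful: $s\l_K = \l_{K}$ composed appropriately — actually $s\,\l_K = \l_K$ evaluated with the cone reparametrized, and $\{s\l_K \le t\} = (t/s)K$, so $s\,\l_K$ is again of the form $\l_{K'}$ only up to the linear reparametrization, hence I would instead directly use $\oZ(e^{-s\l_K})$ computed from the $\SLn$- and translation-covariant structure of $K\mapsto \oZ(e^{-\l_{K}})$ composed with scaling). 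Then I would compute $h(\oZ(g_s),e_1)$ for $K = T_\lambda = \conv\{0,\lambda e_1, e_2,\dots,e_n\}$ using Lemma~\ref{le:t_l_h} and Lemma~\ref{le:vn_l_t_lambda}, obtaining an explicit function of $s$, $\lambda$, and the constants $d_i$.

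Next I would build a valuation identity linking $g_s$ to an indicator function: for the right truncation the function $\Char_{K} \wedge e^{-s\l_K}$ equals $e^{-s\l_K}$ on $K$ and something controlled outside, and $\Char_K \vee e^{-s\l_K} = \Char_K$ on... — the cleanest route is to note $\min(\Char_{rK},\, e^{-s\l_K})$ and $\max$ of the same pair both lie in $\LC$ and their level-set bodies and moment vectors can be written explicitly; applying $\oZ$ and equating via \eqref{eq:val} gives a linear relation among $\{d_i\}$ and $\{c_i\}$ valid for all $r,s,\lambda$. Comparing the coefficients of the linearly independent functions of $(r,s)$ that appear — in particular matching the terms that survive after taking $r\to\infty$ (where the truncation disappears and only $\oZ(\Char_{rK})$-type growth remains) against the $s$-dependence on the cone side — I expect to read off $c_1 = q\,d_1$ and $c_2 = q\,d_2$ (and en route re-derive $c_3$'s relation to $d_3,d_4$, already handled in Lemma~\ref{le:constants}). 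Evaluating support functions at $e_1$ and at $-e_1$ separates the $c_1 K$ term from the $c_2(-K)$ term because $h(T_\lambda,e_1)=\lambda$ while $h(-T_\lambda,e_1)=0$, which is exactly what makes the two constants decouple.

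The main obstacle will be setting up the truncation family so that all four functions $f\vee g$, $f\wedge g$, $f$, $g$ genuinely lie in $\LC$ (upper semicontinuity and vanishing at infinity) and so that the level-set body, moment vector, and volume of each can be evaluated in closed form in terms of $T_\lambda$; in particular one must track the integral $\int_0^\infty h(\{f\ge t\},z)\d t$ across the truncation level, which introduces a logarithmic change of variables exactly as in Lemma~\ref{le:vn_l_t_lambda}. Once the bookkeeping is done, the comparison of coefficients is linear algebra: isolate the coefficient of $\lambda$ (via $h(\cdot,e_1)-h(\cdot,-e_1)$, which kills $c_2$-terms and $\vm$-terms by Lemma~\ref{le:t_l_h}) to get $c_1 = q d_1$, and symmetrically evaluate at $-e_1$ to get $c_2 = q d_2$.
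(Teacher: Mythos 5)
Your high-level plan (truncate a cone function, apply the valuation identity, pass to a limit, read off coefficients from support functions at $\pm e_1$) is indeed the shape of the paper's argument, but two of your concrete choices would not go through.

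First, the paper builds the whole argument over the \emph{segment} $K=[0,e_1/h]$ rather than the full-dimensional simplex $T_\lambda$. A segment has zero $n$-dimensional volume for $n\geq 2$, so on every level set appearing in the computation the quantities $V_n$, $\vm$, and $\oM$ vanish identically, and the $c_3$, $d_3$, $d_4$ contributions disappear from the start. Your cancellation device is incorrect for a full-dimensional $T_\lambda$: since $\vm(T_\lambda)$ is a singleton, its support function $z\mapsto \vm(T_\lambda)\cdot z$ is linear and hence odd, so $h(\vm(T_\lambda),e_1)-h(\vm(T_\lambda),-e_1)=2\,h(\vm(T_\lambda),e_1)\neq 0$; the difference $h(\cdot,e_1)-h(\cdot,-e_1)$ only annihilates the (origin-symmetric) $\oM$ term. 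Repairing this by matching powers of $\lambda$ is possible in principle, but it mixes this lemma with the balance-of-divergences argument that the paper postpones to Lemma~\ref{le:c3_d4} and Lemma~\ref{le:exp_lemma}.

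Second, the truncation family you settle on, $\Char_{rK}\wedge e^{-s\l_K}$ and $\Char_{rK}\vee e^{-s\l_K}$, does not produce a usable limiting identity. As $s\to 0^+$ or $r\to\infty$ one of the pieces hypo-converges to the constant function $1$, whose $-\log$ is not coercive, so the limit leaves $\LC$; as $s\to\infty$ both sides collapse to $\Char_{\{0\}}$ and the identity is trivial. The paper instead sets $\epi u_h = \epi\l_{[0,e_1/h]}\cap\{x_1\leq 1\}$ and $\l_h=\l_{[0,e_1/h]}\circ\tau_{e_1}^{-1}+h$, so that $u_h\wedge\l_h=\l_{[0,e_1/h]}$, $u_h\vee\l_h=\Ind_{\{e_1\}}+h$, and crucially $u_h\eto\Ind_{[0,e_1]}$ as $h\to 0^+$. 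Computing $\oY(\l_h)$ from translation covariance and homogeneity, and taking the limit of the support functions at $e_1$ (and at $-e_1$), yields $c_1=\lim_{h\to 0^+} d_1\tfrac{1-e^{-qh}}{h}=q\,d_1$ and $c_2=q\,d_2$ with no residual terms to control. Your write-up does not identify this family, nor the key role of using a lower-dimensional body, so as it stands there is a genuine gap.
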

\begin{proof}
For $h>0$ let $u_h\in\CV$ be defined via $\epi u_h =  \epi \l_{[0,e_1/h]} \cap \{x_1 \leq 1\}$. Lemma~\ref{le:epi_lvl_sets} shows that $u_h \eto \Ind_{[0,e_1]}$ as $h\to 0$. Moreover, for $\l_h:=\l_{[0,e_1/h]}\circ \tau_{e_1}^{-1}+h$ we have
\begin{equation*}
u_h \mn \l_h =  \l_{[0,e_1/h]},\qquad u_h \mx \l_h = \Ind_{\{e_1\}}+h.
\end{equation*}
Similar as in the proof of Theorem~\ref{thm:class_inv}, let $\oY$ be the valuation on $\CV$ that corresponds to $\oZ$, that is $\oY(u)=\oZ(e^{-u})$ for every $u\in\CV$. Then $\oY$ is continuous, $\SLn$ and translation covariant and $\oY(u+t)=e^{-qt}\oY(u)$ for every $u\in\CV$ and $t\in\R$.
We have
\begin{equation*}
\oY(\l_h) = e^{-qh} \oY(\l_{[0,e_1/h]})+ (c_1-c_2)e^{-qh} e_1
\end{equation*}
and furthermore
\begin{align*}
c_1 = h(\oY(\Ind_{[0,e_1]}),e_1)&=\lim_{h\to 0^+} h(\oY(u_h),e_1)\\
&= \lim_{h\to 0^+} (h(\oY(\l_{[0,e_1/h]}),e_1)+h(\oY(\Ind_{\{e_1\}}+h),e_1)-h(\oY(\l_h),e_1))
\\
&=\lim_{h\to 0^+} (\tfrac{d_1}{h}+(c_1-c_2) e^{-qh} - e^{-qh} \tfrac{d_1}{h} - (c_1-c_2) e^{-qh})\\
&=\lim_{h\to 0^+} d_1 \tfrac{1-e^{-qh}}{h} =  q\,d_1.
\end{align*}
Similarly, evaluating the support functions at $-e_1$ shows that $c_2=q\,d_2$.
\end{proof}

\noindent
In the following we say that a Minkowski valuation $\oZ:\LC\to\Kn$ is \emph{trivial} if $\oZ(f)=\{0\}$ for every $f\in\LC$.\par
Using the earlier highlighted relation between valuations on $\LC$ and valuations on $\CV$, we obtain the following result from \cite[Lemma 8.7]{colesanti_ludwig_mussnig_mink}, where $\SLn$ covariant and translation invariant valuations on $\CV$ were studied.

\begin{lemma}
\label{le:trans_inv_mink_val}
Every continuous, $\SLn$ covariant and translation invariant Minkowski valuation $\oZ:\LC\to\Kn$ is uniquely determined by the values $\oZ(s\,\Char_K)$ with $s>0$ and $K\in\Kn$.
\end{lemma}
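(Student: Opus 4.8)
\emph{Sketch of the argument.} The plan is to transfer the statement from $\LC$ to $\CV$ and then invoke the corresponding uniqueness theorem for convex functions. Let $\oZ\colon\LC\to\Kn$ be a continuous, $\SLn$ covariant and translation invariant Minkowski valuation, and set $\oY(u):=\oZ(e^{-u})$ for $u\in\CV$. As explained in Section~\ref{subse:conv_and_log_fct}, $\oY$ is a continuous Minkowski valuation on $\CV$, and both $\SLn$ covariance and translation invariance pass from $\oZ$ to $\oY$. Since $(\Kn,+)$ with Minkowski addition is a topological abelian semigroup satisfying the cancellation law, Lemma~\ref{le:reduction} is available for $\oY$. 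Finally, for $s>0$ and $K\in\Kn$ we have $s\,\Char_K=e^{-(\Ind_K-\log s)}$, so prescribing all values $\oZ(s\,\Char_K)$ is the same as prescribing all values $\oY(\Ind_K+t)$ with $K\in\Kn$ and $t\in\R$.

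First I would apply Lemma~\ref{le:reduction}: $\oY$ is uniquely determined by the values $\oY(\l\circ\tau_x^{-1})$ with $\l\in\{\l_P+t:P\in\Po,\,t\in\R\}$ and $x\in\Rn$, and by translation invariance these reduce to $\oY(\l_P+t)$. Hence it suffices to show that the family $\{\oY(\l_P+t):P\in\Po,\,t\in\R\}$ is already determined by the family $\{\oY(\Ind_K+t):K\in\Kn,\,t\in\R\}$. For Minkowski valuations on $\CV$ this is precisely \cite[Lemma 8.7]{colesanti_ludwig_mussnig_mink}; since all of its hypotheses carry over to $\oY$, applying it completes the argument, and $\oZ$ is recovered from $\oY$ through the dictionary above.

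The substantive content therefore lies in that cited lemma, and within the present paper essentially nothing remains beyond the translation. The engine of the $\CV$-statement is again Lemma~\ref{le:reduction} together with decompositions that express $\oY$ on the functions $\l_P+t$ through $\oY$ on indicator functions --- of the flavour already illustrated in the proof of Lemma~\ref{le:c1_c2}, where truncating the cone $\epi\l_P$ by a halfspace produces a function $u_h\in\CV$ whose $\mx$ and $\mn$ with a suitably translated copy of $\l_P+h$ involve, on one side, (a scaled copy of) $\l_P$ and, on the other, an indicator function plus a constant, after which one lets $h\to0^{+}$ and uses $u_h\eto\Ind_P$ together with continuity and the valuation identity of $\oY$; a general polytope $P\in\Po$ is then reduced to simplices by a triangulation and inclusion--exclusion, moving vertices into convenient position via $\SLn$ covariance and translation invariance. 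I expect the main obstacle to be the bookkeeping in these iterated $\mx/\mn$ decompositions --- keeping every intermediate function in $\CV$ and controlling the epi-convergence of the truncated functions once the truncation recedes --- which is exactly what is handled in \cite[Lemma 8.7]{colesanti_ludwig_mussnig_mink}.
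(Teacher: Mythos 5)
Your proposal is correct and follows the same route as the paper: transfer $\oZ$ to the valuation $\oY(u)=\oZ(e^{-u})$ on $\CV$, identify the data $\oZ(s\,\Char_K)$ for $s>0$, $K\in\Kn$ with the data $\oY(\Ind_K+t)$ for $K\in\Kn$, $t\in\R$ via $t=-\log s$, and invoke \cite[Lemma 8.7]{colesanti_ludwig_mussnig_mink}. The only difference is that you interpose an extra pass through Lemma~\ref{le:reduction}; this is harmless but superfluous, since Lemma~8.7 of that reference already yields the full uniqueness on $\CV$ from the values on indicator functions.
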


\begin{lemma}
\label{le:hom_of_pos_deg}
Every continuous, equi-affinely covariant Minkowski valuation $\oZ:\LC\to\Kn$ is either homogeneous of a positive degree or trivial.
\end{lemma}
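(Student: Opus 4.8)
The plan is to show that the degree of homogeneity $q\in\R$ of $\oZ$ (which is part of the data of being equi-affinely covariant) is forced to be positive as soon as $\oZ$ is non-trivial. So I would argue by assuming $q\le 0$ and deducing that $\oZ(f)=\{0\}$ for every $f\in\LC$.

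First I would feed $\oZ$ into Lemma~\ref{le:constants}, obtaining constants $c_1,c_2\ge 0$, $c_3\in\R$ and $d_1,d_2\ge 0$ with $\oZ(s\,\Char_K)=s^q\big(c_1 K+c_2(-K)+c_3\vm(K)\big)$ for all $K\in\Kn$, $s>0$, and $\oZ^0(f)=(c_1-c_2)V_0(f)^q+c_3 V_n(f^q)$ for all $f\in\LC$. Since that lemma requires $q>0$ whenever $c_3\neq 0$, the assumption $q\le 0$ already forces $c_3=0$. Next I would invoke Lemma~\ref{le:c1_c2}, which gives $c_1=q\,d_1$ and $c_2=q\,d_2$. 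Combining $q\le 0$ with $d_1,d_2\ge 0$ yields $q\,d_1\le 0$ and $q\,d_2\le 0$, and since $c_1,c_2\ge 0$ this pins down $c_1=c_2=0$ (if $q=0$ this is immediate, and if $q<0$ it follows from the two opposite sign constraints).

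With $c_1=c_2=c_3=0$ two consequences drop out simultaneously: $\oZ(s\,\Char_K)=\{0\}$ for every $K\in\Kn$ and every $s>0$, and $\oZ^0\equiv 0$, so that $\oZ(f\circ\tau_x^{-1})=\oZ(f)$, i.e.\ $\oZ$ is translation invariant. Thus $\oZ$ is now a continuous, $\SLn$ covariant, translation invariant Minkowski valuation, so Lemma~\ref{le:trans_inv_mink_val} applies: such a valuation is uniquely determined by the values $\oZ(s\,\Char_K)$, $s>0$, $K\in\Kn$. The trivial Minkowski valuation $f\mapsto\{0\}$ is also continuous, $\SLn$ covariant and translation invariant, and it agrees with $\oZ$ on all $s\,\Char_K$; by the uniqueness in Lemma~\ref{le:trans_inv_mink_val} we conclude $\oZ\equiv\{0\}$. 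Contrapositively, if $\oZ$ is not trivial then $q>0$, which is the claim.

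I do not expect a genuine obstacle here, since Lemmas~\ref{le:constants}, \ref{le:c1_c2} and \ref{le:trans_inv_mink_val} do all the heavy lifting; the one point that needs a moment's care is recognising that the nonnegativity constraints $c_1,c_2,d_1,d_2\ge 0$ together with the identities $c_i=q\,d_i$ are exactly what excludes $q\le 0$ in the non-trivial case, and that annihilating all three constants not only kills $\oZ$ on characteristic functions but also eliminates the associated translation term $\oZ^0$, which is precisely the hypothesis needed to invoke the translation-invariant reduction lemma.
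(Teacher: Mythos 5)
Your proof is correct and follows essentially the same route as the paper: invoke Lemma~\ref{le:constants} to force $c_3=0$ when $q\le 0$, then Lemma~\ref{le:c1_c2} together with the nonnegativity constraints to pin down $c_1=c_2=0$, and finally note that the vanishing of these constants simultaneously kills $\oZ$ on all $s\,\Char_K$ and makes $\oZ^0\equiv 0$, so that Lemma~\ref{le:trans_inv_mink_val} yields triviality. There is nothing to add; the argument matches the paper's step for step.
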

\begin{proof}
Let $d_1,d_2,c_1,c_2,c_3$ and $q$ denote the constants from Lemma~\ref{le:constants} and suppose that $q\leq 0$. It follows from Lemma~\ref{le:constants} that $c_3=0$. Furthermore, since $c_1,c_2,d_1$ and $d_2$ are non-negative, Lemma~\ref{le:c1_c2} yields that also $c_1=c_2=0$. Hence, $\oZ^0\equiv 0$ and $\oZ$ is translation invariant. Moreover, $\oZ(s\,\Char_K)=\{0\}$ for every $s>0$ and $K\in\Kn$. Thus, Lemma~\ref{le:trans_inv_mink_val} shows that $\oZ$ is trivial.
\end{proof}

\begin{lemma}
\label{le:exp_lemma}
For $a,b\in\R$ and $q>0$ the following holds:
\begin{equation*}
\lim_{h\to 0^+} \left(a \frac{1-e^{-qh}}{h^2} - b \frac{e^{-qh}}{h}\right) = \begin{cases} +\infty \quad &\text{if } b<q\,a\\\tfrac{q}{2}b \quad &\text{if } b=q\,a\\ -\infty \quad &\text{if } b>q\,a.\end{cases}
\end{equation*}
\end{lemma}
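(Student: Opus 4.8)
The plan is to reduce everything to a single Taylor expansion of the exponential at the origin. First I would invoke Taylor's theorem to write $e^{-qh} = 1 - qh + \tfrac{q^2}{2}h^2 + r(h)$, where the remainder satisfies $|r(h)| \le C\,h^3$ for all $h$ in a neighbourhood of $0$ (the constant $C$ depending only on $q$); consequently $1 - e^{-qh} = qh - \tfrac{q^2}{2}h^2 - r(h)$. Dividing by $h^2$ and by $h$ respectively gives
\[
a\,\frac{1-e^{-qh}}{h^2} = \frac{qa}{h} - \frac{q^2a}{2} - \frac{a\,r(h)}{h^2}, \qquad b\,\frac{e^{-qh}}{h} = \frac{b}{h} - qb + \frac{q^2b}{2}h + \frac{b\,r(h)}{h}.
\]

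Subtracting the two identities, I would obtain
\[
a\,\frac{1-e^{-qh}}{h^2} - b\,\frac{e^{-qh}}{h} = \frac{qa - b}{h} + \Bigl(qb - \frac{q^2a}{2}\Bigr) - \frac{q^2b}{2}h - \frac{a\,r(h)}{h^2} - \frac{b\,r(h)}{h}.
\]
Because $|r(h)| \le C h^3$, the three terms $\tfrac{q^2b}{2}h$, $\tfrac{a\,r(h)}{h^2}$ and $\tfrac{b\,r(h)}{h}$ all tend to $0$ as $h \to 0^+$, so the asymptotic behaviour is governed entirely by the first two summands. Now I would split into the three cases according to the sign of the coefficient $qa - b$ of $1/h$: if $b < qa$ then $qa-b>0$ and $\tfrac{qa-b}{h}\to+\infty$, hence the whole expression tends to $+\infty$; if $b > qa$ it tends to $-\infty$ in the same way; and if $b = qa$ the $1/h$ term vanishes and the limit equals the constant $qb - \tfrac{q^2a}{2}$, which, upon substituting $qa = b$, becomes $qb - \tfrac{qb}{2} = \tfrac{q}{2}b$, as claimed.

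There is no genuine obstacle in this argument; it is a routine asymptotic computation. The only point that deserves a little care is to keep explicit track of the cubic bound $|r(h)| \le C h^3$ on the Taylor remainder, so that after dividing by $h^2$ one still has a quantity of order $O(h)$ and may legitimately conclude that it vanishes in the limit, rather than merely appealing informally to ``higher-order terms''.
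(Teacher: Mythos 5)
Your proof is correct. The paper combines the two terms into a single fraction $\frac{a(1-e^{-qh})-b h e^{-qh}}{h^2}$, observes that both numerator and denominator vanish as $h\to 0^+$, and applies L'Hospital's rule once to reduce to $\lim_{h\to 0^+}\frac{e^{-qh}}{2h}(qa-b)+\frac{q}{2}b$, from which the three cases drop out immediately. You instead expand $e^{-qh}$ to second order with an explicit cubic remainder bound and isolate the singular term $\frac{qa-b}{h}$ directly. The two routes are essentially equivalent calculus arguments; L'Hospital is slightly shorter here because it sidesteps the bookkeeping of remainder terms, while your Taylor expansion is marginally more self-contained and makes the order-of-vanishing of each contribution completely explicit. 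Both yield the same trichotomy in $qa-b$.
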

\begin{proof}
Since,
\begin{equation*}
a \frac{1-e^{-qh}}{h^2} - b \frac{e^{-qh}}{h} = \frac{a\,(1-e^{-qh})-b\,h\,e^{-qh}}{h^2},
\end{equation*}
and
\begin{equation*}
\lim_{h\to 0^+}  \big( a\,(1-e^{-qh})-b \,h\,e^{-qh} \big) = 0,
\end{equation*}
we can apply L'Hospital's rule to obtain
\begin{align*}
\begin{split}
\lim_{h\to 0^+} \frac{a\,(1-e^{-qh})-b\,h e^{-qh}}{h^2} &= \lim_{h\to 0^+} \frac{q\,a\,e^{-q\,h} - b\,e^{-qh} + q\,b\,h e^{-qh}}{2h}\\
&= \lim_{h\to 0^+} \tfrac{e^{-qh}}{2h}(q\,a-b) + \tfrac{q}{2} b.
\end{split}
\end{align*}
The claim now follows since $\tfrac{e^{-qh}}{2h} \to +\infty$ as $h\to 0^+$.
\end{proof}

\begin{lemma}
\label{le:c3_d4}
Let $\oZ:\LC\to\Kn$ be a continuous, equi-affinely covariant Minkowski valuation. If $c_3,d_3,d_4, q$ denote the constants from Lemma \ref{le:constants}, then $c_3= \tfrac{q^{n+1}}{(n+1)!}d_3$ and $d_4 = 0$.
\end{lemma}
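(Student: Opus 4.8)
The plan is to rerun the difference-quotient argument from the proof of Lemma~\ref{le:c1_c2}, but with the segment $[0,e_1/h]$ replaced by the full-dimensional simplex $T_{1/h}=\conv\{0,\tfrac1h e_1,e_2,\dots,e_n\}$. This keeps the moment vector and moment body terms in Lemma~\ref{le:constants} alive, and by Lemma~\ref{le:t_l_h} (applied with $\lambda=1/h$) the corresponding support-function values grow like $h^{-2}$, which is precisely the scaling handled by Lemma~\ref{le:exp_lemma}. We may assume $\oZ$ is not trivial, so by Lemma~\ref{le:hom_of_pos_deg} its degree of homogeneity satisfies $q>0$. As usual we pass to the valuation $\oY$ on $\CV$ with $\oY(u)=\oZ(e^{-u})$, which satisfies $\oY(u+t)=e^{-qt}\oY(u)$ and the translation rule $\oY(g\circ\tau_{e_1}^{-1})=\oY(g)+\oZ^0(e^{-g})\,e_1$.

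For $h>0$ define $u_h\in\CV$ by $\epi u_h=\epi\l_{T_{1/h}}\cap\{x_1\le 1\}$ and set $\l_h:=\l_{T_{1/h}}\circ\tau_{e_1}^{-1}+h$, whose epigraph equals $\epi\l_{T_{1/h}}\cap\{x_1\ge 1\}$. Taking unions and intersections of epigraphs one reads off, writing $\Delta:=\conv\{0,e_2,\dots,e_n\}$, that $u_h\mn\l_h=\l_{T_{1/h}}$ and $u_h\mx\l_h=\l_\Delta\circ\tau_{e_1}^{-1}+h$; all four functions lie in $\CV$, so the valuation property yields
\begin{equation*}
\oY(u_h)=\oY(\l_{T_{1/h}})+\oY\big(\l_\Delta\circ\tau_{e_1}^{-1}+h\big)-\oY(\l_h).
\end{equation*}
Using Lemma~\ref{le:epi_lvl_sets} one checks that $u_h\eto u_0$ as $h\to 0^+$ for an explicit $u_0\in\CV$ (concretely $\{u_0\le t\}=[0,e_1]+t\,\Delta$ for $t\ge0$); since $\oZ$ is continuous, $\oY(u_h)\to\oZ(e^{-u_0})\in\Kn$, so $h(\oY(u_h),z)$ converges to a finite real number for every $z\in\Rn$.

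Next I evaluate the support functions of both sides of the displayed identity at $e_1$ and at $-e_1$. Writing $\oZ(e^{-\l_K})=d_1K+d_2(-K)+d_3\vm(K)+d_4\oM(K)$ as in Lemma~\ref{le:constants}, I use $h(T_{1/h},e_1)=1/h$, $h(-T_{1/h},e_1)=0$ and $h(\vm T_{1/h},e_1)=h(\oM T_{1/h},e_1)=\tfrac1{h^2(n+1)!}$ from Lemma~\ref{le:t_l_h}; the vanishing $\vm(\Delta)=\oM(\Delta)=\{0\}$ because $\Delta$ is lower-dimensional; the formula $\oZ^0(f)=(c_1-c_2)V_0(f)^q+c_3V_n(f^q)$ together with $V_0(e^{-\l_K})=1$, $V_n(e^{-q\l_\Delta})=0$, and $V_n(e^{-q\l_{T_{1/h}}})=\tfrac1{hq^n}$ (the last by Lemma~\ref{le:vn_l_t_lambda}). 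After the bounded $(c_1-c_2)$-terms cancel, the right-hand side evaluated at $e_1$ equals
\begin{equation*}
\frac{d_3+d_4}{(n+1)!}\cdot\frac{1-e^{-qh}}{h^2}-\frac{c_3}{q^n}\cdot\frac{e^{-qh}}{h}+d_1\cdot\frac{1-e^{-qh}}{h},
\end{equation*}
and evaluated at $-e_1$ it equals $\tfrac{d_4-d_3}{(n+1)!}\cdot\tfrac{1-e^{-qh}}{h^2}+\tfrac{c_3}{q^n}\cdot\tfrac{e^{-qh}}{h}+d_2\cdot\tfrac{1-e^{-qh}}{h}$. In each case the final summand stays bounded while the whole expression has a finite limit as $h\to0^+$, so Lemma~\ref{le:exp_lemma} forces $\tfrac{c_3}{q^n}=q\,\tfrac{d_3+d_4}{(n+1)!}$ from the first evaluation and $-\tfrac{c_3}{q^n}=q\,\tfrac{d_4-d_3}{(n+1)!}$ from the second. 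Adding and subtracting these two relations gives $d_4=0$ and $c_3=\tfrac{q^{n+1}}{(n+1)!}d_3$.

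The delicate point is the first step — choosing $u_h$ and the companion $\l_h$. One must arrange simultaneously that $u_h\mn\l_h$ and $u_h\mx\l_h$ are again functions of the restricted type covered by Lemma~\ref{le:constants} (cones $\l_K$, possibly translated and lifted), that $u_h$ epi-converges to a genuine element of $\CV$ — a careless rescaling of a simplex produces level sets that either blow up or collapse to lower dimension, destroying coercivity — and that the rescaling is by $1/h$ exactly in the $e_1$-direction, so that the relevant support-function values exhibit the $h^{-2}$ growth that makes Lemma~\ref{le:exp_lemma} the right tool. Once this configuration is in place, the remaining steps are routine bookkeeping with Lemmas~\ref{le:t_l_h}, \ref{le:vn_l_t_lambda} and~\ref{le:constants}.
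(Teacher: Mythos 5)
Your argument is correct and matches the paper's proof of Lemma~\ref{le:c3_d4} essentially step for step: the same auxiliary function $u_h$ with $\epi u_h=\epi\l_{T_{1/h}}\cap\{x_1\le 1\}$, the same companion $\l_h=\l_{T_{1/h}}\circ\tau_{e_1}^{-1}+h$ giving $u_h\mn\l_h=\l_{T_{1/h}}$ and $u_h\mx\l_h=\l_\Delta\circ\tau_{e_1}^{-1}+h$, the same epi-limit as $h\to 0^+$, and the same evaluation of support functions at $\pm e_1$ followed by Lemma~\ref{le:exp_lemma}. Your epigraph-based verification that $\epi\l_h=\epi\l_{T_{1/h}}\cap\{x_1\ge 1\}$ is a nice way to see the valuation relation, but the underlying computation is identical to the paper's.
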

\begin{proof}
By Lemma~\ref{le:hom_of_pos_deg}, we can assume without loss of generality that $q>0$. Define\break$v\in\CV$ via
\begin{equation*}
\{v < 0\}=\emptyset,\quad \{v\leq s\} = [0,e_1] + \conv\{0,s\,e_2,\ldots,s\,e_n\},
\end{equation*}
for every $s\geq 0$. Now, for $h>0$ let $T_{1/h}$ be defined as in Lemmas~\ref{le:t_l_h}~\&~\ref{le:vn_l_t_lambda} and define the function $u_h$ via
\begin{equation*}
\{u_h\leq s\} = \{\l_{T_{1/h}} \leq s\} \cap \{x_1 \leq 1\},
\end{equation*}
for every $s\in\R$. It is easy to see that $u_h\in\CV$ and furthermore,
\begin{eqnarray*}
\{u_h \leq s\} \cup \{\l_{T_{1/h}}\circ \tau_{e_1}^{-1} + h \leq s \} &=& \{\l_{T_{1/h}}\leq s\}\\
\{u_h \leq s\} \cap \{\l_{T_{1/h}}\circ \tau_{e_1}^{-1} + h \leq s \} &=& \{\l_{\conv\{0,e_2,\ldots,e_n\}} \circ \tau_{e_1}^{-1} +h \leq s\},
\end{eqnarray*}
for every $s\in\R$. Thus, denoting $\oY(u)=\oZ(e^{-u})$ for $u\in\CV$, this gives
\begin{equation}
\label{eq:val_t_l_h}
\oY(u_h)+\oY(\l_{T_{1/h}}\circ\tau_{e_1}^{-1}+h) = \oY(\l_{T_{1/h}}) + \oY(\l_{\conv\{0,e_2,\ldots,e_n\}}\circ \tau_{e_1}^{-1}+h).
\end{equation}
By Lemmas~\ref{le:vn_l_t_lambda}~\&~\ref{le:constants} we have
\begin{eqnarray*}
\oY(\l_{T_{1/h}}\circ\tau_{e_1}^{-1}+h) &=& e^{-qh} \oY(\l_{T_{1/h}})+e^{-qh} ((c_1-c_2)+\tfrac{c_3}{h\, q^n}) e_1\\
\oY(\l_{\conv\{0,e_2,\ldots,e_n\}}\circ \tau_{e_1}^{-1}+h) &=& e^{-qh} \oY(\l_{\conv\{0,e_2,\ldots,e_n\}})+e^{-qh}(c_1-c_2) e_1.
\end{eqnarray*}
Furthermore, using Lemma~\ref{le:t_l_h} we obtain for the support functions
\begin{eqnarray*}
h(\oY(\l_{T_{1/h}}),e_1) &=& \tfrac{d_1}{h} + \tfrac{d_3+d_4}{h^2\, (n+1)!},\\
h(\oY(\l_{T_{1/h}}\circ\tau_{e_1}^{-1}+h),e_1) &=& e^{-qh} \big(\tfrac{d_1}{h}+ \tfrac{d_3+d_4}{h^2\, (n+1)!}  +(c_1-c_2)  + \tfrac{c_3}{h\, q^n}\big),\\
h(\oY(\l_{\conv\{0,e_2,\ldots,e_n\}}),e_1) &=& e^{-qh} (c_1-c_2).
\end{eqnarray*}
Observe, that for $h\to 0^+$ we have $u_h\eto v$. Hence, by the continuity of $\oY$ and (\ref{eq:val_t_l_h}), we have
\begin{align*}
h(\oY(v),e_1)&=\lim_{h\to 0^+} h(\oY(u_h),e_1)\\
&=\lim_{h\to 0^+} \big(\tfrac{d_1}{h} (1-e^{-qh}) +  \tfrac{d_3+d_4}{h^2\, (n+1)!} (1-e^{-qh}) - \tfrac{c_3}{h\, q^n} e^{-qh}\big)\\
&= q d_1 + \lim_{h\to 0^+} \big(\tfrac{d_3+d_4}{(n+1)!} \tfrac{1-e^{-qh}}{h^2} - \tfrac{c_3}{q^n} \tfrac{e^{-qh}}{h} \big).
\end{align*}
Since this expression must be finite, it follows from Lemma~\ref{le:exp_lemma} that
\begin{equation*}
\tfrac{c_3}{q^n} = q \tfrac{d_3+d_4}{(n+1)!}.
\end{equation*}
Similarly, repeating the calculations above but evaluating the support functions at $-e_1$ gives
\begin{equation*}
\tfrac{c_3}{q^n} = q \tfrac{d_3-d_4}{(n+1)!}.
\end{equation*}
Hence, $d_4=0$ and $c_3=\tfrac{q^{n+1}}{(n+1)!} d_3$.
\end{proof}

\noindent
By Lemma~\ref{le:reduction}, every continuous, equi-affinely covariant Minkowski valuation $\oZ$ on $\LC$ is uniquely determined by the constants $c_1,c_2,c_3,d_1,d_2,d_3,d_4$ and $q$ from Lemma~\ref{le:constants}. By Lemmas~\ref{le:c1_c2}~\&~\ref{le:c3_d4} we have $d_1 = \tfrac{c_1}{q}, d_2=\tfrac{c_2}{q}, d_3=\tfrac{(n+1)!}{q^{n+1}} c_3$ and $d_4=0$. Hence, $\oZ$ is completely determined by the constants $c_1,c_2,c_3$ and $q$. Thus, we have the following result.
\begin{lemma}
\label{le:z_determined_on_s_char}
Every continuous, equi-affinely covariant Minkowski valuation $\oZ:\LC\to\Kn$ is uniquely determined by the values $\oZ(s\,\Char_K)$ with $s>0$ and $K\in\Kn$.
\end{lemma}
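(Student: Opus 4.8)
The plan is to recover, from the prescribed values, the data that already determines $\oZ$. By the discussion preceding the statement, a continuous, equi-affinely covariant Minkowski valuation $\oZ$ on $\LC$ is completely described by the constants $c_1,c_2,c_3$ and $q$ of Lemma~\ref{le:constants}, since $d_1,d_2,d_3,d_4$ are expressed through these via Lemmas~\ref{le:c1_c2} and~\ref{le:c3_d4}; when $c_1=c_2=c_3=0$ all of $d_1,\dots,d_4$ vanish as well, so that $\oZ\equiv\{0\}$ by Lemma~\ref{le:reduction} irrespective of $q$. Moreover, Lemma~\ref{le:constants} provides the explicit formula
\[
\oZ(s\,\Char_K)=s^q\big(c_1K+c_2(-K)+c_3\,\vm(K)\big),\qquad s>0,\ K\in\Kn,
\]
so the whole task is to extract $c_1,c_2,c_3$ (and, when $\oZ$ is nontrivial, $q$) from this family.

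First I would read off $c_1$ and $c_2$ by taking $K=[0,e_1]$: since this segment is lower-dimensional we have $\vm([0,e_1])=0$, and evaluating the support function of $\oZ(\Char_{[0,e_1]})=c_1[0,e_1]+c_2(-[0,e_1])$ at $e_1$ and at $-e_1$ gives $c_1$ and $c_2$ respectively. Next I would take $K$ to be a fixed ball translated off the origin, so that $\vm(K)\ne 0$; knowing $c_1$ and $c_2$, the support function of $\oZ(\Char_K)$ at $e_1$ isolates $c_3$. Finally, if $\oZ$ is nontrivial then $q>0$ by Lemma~\ref{le:hom_of_pos_deg} and at least one of the bodies above is not $\{0\}$; choosing such a $K_0$ together with a direction $z_0$ for which $h(\oZ(\Char_{K_0}),z_0)\ne 0$, the homogeneity relation $\oZ(s\,\Char_{K_0})=s^q\oZ(\Char_{K_0})$ recovers $q$, for instance through $2^q=h(\oZ(2\,\Char_{K_0}),z_0)/h(\oZ(\Char_{K_0}),z_0)$. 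Hence the values $\oZ(s\,\Char_K)$ determine $c_1,c_2,c_3$ and $q$, and therefore determine $\oZ$.

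These computations are all elementary; the only point requiring attention is the degenerate situation in which every $\oZ(s\,\Char_K)$ equals $\{0\}$. There the choices above still force $c_1=c_2=c_3=0$ (using $c_1,c_2\ge0$ for the segment and $\vm(K)\ne0$ for the translated ball), while $q$ is genuinely not pinned down — but this is harmless: with all of $c_1,c_2,c_3$ and $d_1,\dots,d_4$ equal to zero one has $\oZ^0(f)=(c_1-c_2)V_0(f)^q+c_3V_n(f^q)=0$ for every $f\in\LC$, so $\oZ$ agrees with the trivial valuation on $\{\l_P+t:\,P\in\Po,\ t\in\R\}$ and all their translates, and is therefore identically $\{0\}$ by Lemma~\ref{le:reduction}.
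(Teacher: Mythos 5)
Your proof is correct and takes essentially the same route as the paper: both arguments combine Lemma~\ref{le:reduction} with Lemmas~\ref{le:constants}, \ref{le:c1_c2} and \ref{le:c3_d4} to conclude that $\oZ$ is completely determined by the constants $c_1,c_2,c_3$ and $q$, which in turn are read off from the values $\oZ(s\,\Char_K)$. You are somewhat more explicit than the paper about how to extract each constant (a segment for $c_1,c_2$, a translated ball for $c_3$, homogeneity for $q$) and about the degenerate case where $q$ is not recoverable; the paper leaves that case implicit, but, as you note, it is handled by Lemma~\ref{le:hom_of_pos_deg}, which is also what justifies deducing $d_1=\dots=d_4=0$ from $c_1=c_2=c_3=0$ when $q\le 0$.
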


\subsection{Proof of Theorem~\ref{thm:class_cov}}
By Lemmas~\ref{le:id_on_lc_is_a_val}~\&~\ref{le:mv_on_lc_is_a_val}, the operator
\begin{equation*}
f \mapsto c_1 [f^q]+c_2(-[f^q])+c_3 \vm(f^q),
\end{equation*}
defines a continuous, equi-affinely covariant Minkowski valuation on $\LC$ for every $c_1,c_2\geq 0$, $c_3\in\R$ and $q>0$.\par
Conversely, let $\oZ:\LC\to\R$ be a continuous, equi-affinely covariant Minkowski valuation. For arbitrary $K\in\Kn$ and $s>0$, let $f=s\,\Char_K$. By Lemma~\ref{le:constants}, there exist constants $c_1,c_2\geq 0$ and $c_3,q\in\R$ such that
\begin{equation*}
\oZ(f)=s^q(c_1 K + c_2 (-K) + c_3 \vm(K))
\end{equation*}
and by Lemma~\ref{le:hom_of_pos_deg} we may assume that $q>0$. Since
\begin{eqnarray*}
h([f^q],z)&=&\int_0^{+\infty} h(\{s^q \Char_K \geq t\},z)\d t = s^q\,h(K,z)\\
h(\vm(f^q),z) &=& \int_{\Rn} s^q \Char_K(x) (x\cdot z)\d x = s^q\,h(\vm(K),z)
\end{eqnarray*}
we have $\oZ(f)=c_1 [f^q]+c_2(-[f^q]) + c_3 \vm(f^q)$. Thus, Lemma~\ref{le:z_determined_on_s_char} completes the proof of the theorem.
\subsection*{Acknowledgments}
The author was supported, in part, by Austrian Science Fund (FWF) Project P25515-N25.

\small


%
%
%
%
%
%
%
%
%

\bigskip\bigskip\bigskip\footnotesize
\parindent 0pt

\parbox[t]{8.5cm}{
Fabian Mussnig\\
Institut f\"ur Diskrete Mathematik und Geometrie\\
Technische Universit\"at Wien\\
Wiedner Hauptstra\ss e 8-10/1046\\
1040 Wien, Austria\\
e-mail: fabian.mussnig@alumni.tuwien.ac.at
}

\end{document}